\definecolor{Gray}{gray}{0.9}
\newcolumntype{g}{>{\columncolor{Gray}}c}
\newcommand{\ut} [1]{\undertilde{#1}}
\newcommand{\NRho} {\Nh\mkern-5mu\Rho}
\renewcommand{\bigwedge}{\scaleobj{1.2}{\wedge}}
\newtheorem{thm}{Theorem}[section]
\newtheorem{cor}{Corollary}[section]
\newtheorem{lem}{Lemma}[section]
\newtheorem{prop}{Proposition}[section]
\newtheorem{defn}{Definition}[section]
\newtheorem{rem}{Remark}[section]
\numberwithin{equation}{section}
\newenvironment{proof}{\noindent \emph{Proof.}}{\hspace{\stretch{1}}$\Box$}
\newcommand{\parderv}[2] {\frac{\partial#1}{\partial#2}}
\newcommand{\mbf} [1]{\mathbf{#1}}
\newcommand{\mc} [1]{\mathcal{#1}}
\newcommand{\wh} [1]{\widehat{#1}}
\newcommand{\ul} [1]{\underline{#1}}
\newcommand{\dd} {\mathrm{d}}
\newcommand{\ii} {\mathrm{i}}
\newcommand{\ee} {\mathrm{e}}
\newcommand{\Rho} {\mbox{\textsf{P}}}
\newcommand{\Nh} {\mbox{\textsf{N}}}
\newcommand{\Ric} {\mathrm{Ric}}
\newcommand{\Sc} {\mathrm{Sc}}
\newcommand{\Ann} {\mathrm{Ann}}
\newcommand{\Id} {\mathrm{Id}}
\newcommand{\R} {\mathbf{R}}
\newcommand{\C} {\mathbf{C}}
\newcommand{\Z} {\mathbf{Z}}
\newcounter{mnotecount}[section]
\renewcommand{\themnotecount}{\thesection.\arabic{mnotecount}}
\newcommand{\mnote}[1]
{\protect{\stepcounter{mnotecount}}$^{\mbox{\footnotesize
$
\bullet$\themnotecount}}$ \marginpar{\color{red}
\raggedright\tiny\em
$\!\!\!\!\!\!\,\bullet$\themnotecount: #1} }
\providecommand{\keywords}[1]{{\small \textbf{Keywords:} #1}}
\providecommand{\subjclass}[1]{{\small \textbf{Mathematics Subject Classification (2010):} #1}}
\title{Twisting non-shearing congruences of null geodesics,\\
almost CR structures, and Einstein metrics in even dimensions
}
 \author{Arman Taghavi-Chabert\thanks{Department of Mathematics, Faculty of Arts and Sciences, American University of Beirut, P.O. Box 11-0236, Riad El Solh, Beirut 1107 2020, Lebanon,
 		\textit{E-mail address:} \texttt{at68@aub.edu.lb}}}
\date{}
\begin{document}
\maketitle

\begin{abstract}
We investigate the geometry of a twisting non-shearing congruence of null geodesics on a conformal manifold of even dimension greater than four and Lorentzian signature. We give a necessary and sufficient condition on the Weyl tensor for the twist to induce an almost Robinson structure, that is, the screen bundle of the congruence is equipped with a bundle complex structure. In this case, the (local) leaf space of the congruence acquires a partially integrable contact almost CR structure of positive definite signature. We give further curvature conditions for the integrability of the almost Robinson structure and the almost CR structure, and for the flatness of the latter.

We show that under a mild natural assumption on the Weyl tensor, any metric in the conformal class that is a solution to the Einstein field equations determines an almost CR--Einstein structure on the leaf space of the congruence. These metrics depend on three parameters, and include the Fefferman--Einstein metric and Taub--NUT--(A)dS metric in the integrable case. In the non-integrable case, we obtain new solutions to the Einstein field equations, which, we show, can be constructed from strictly  almost K\"{a}hler--Einstein manifolds.
\end{abstract}
\keywords{Lorentzian geometry, Conformal geometry, Congruences of null geodesics, Almost Robinson structures, Almost CR geometry, Einstein metrics, Almost K\"{a}hler--Einstein metrics}\\
\subjclass{53C50, 53B30, 53C18, 53C10, 32V05, 83C20, 32Q20, 32Q60}


\section{Introduction}\label{sec:intro}
A \emph{non-shearing congruence of null geodesics} on a Lorentzian manifold $(\mc{M}, g)$ is a (local) foliation $\mc{K}$ by geodesics generated by a null vector field $k$, i.e.\ $g(k,k)=0$, such that the metric
$g$ is preserved along the flow of $k$ when restricted to vectors orthogonal to $k$, i.e.\
\begin{align*}
\mathsterling_k g (v,w) & \propto g (v ,w) \, , & \mbox{for any vector fields $v, w$ such that $g(k,v)= g(k,w)=0$.}
\end{align*}
In dimension four, non-shearing congruences of null geodesics are central objects of mathematical relativity, and their existence is intimately connected with solutions to the vacuum Maxwell equation and solutions to the Einstein field equations according to the Robinson theorem \cite{Robinson1961} and the Goldberg-Sachs theorem \cite{Goldberg1962,Goldberg2009} respectively. The latter asserts that any Einstein spacetime admits a non-shearing congruence of null geodesics if and only if its Weyl tensor is algebraically special. This includes many well-known solutions such as the Kerr black hole \cite{Kerr1963}, Robinson--Trautman spacetimes  \cite{Robinson1961/62} and Kundt spacetimes \cite{Kundt1961}.

Another feature of these congruences in dimension four is that they are equivalent to the existence of an involutive totally null complex $2$-plane distribution $N$. What is more, $N$ induces a \emph{Cauchy--Riemann (CR)} structure on the three-dimensional (local) leaf space $\ul{\mc{M}}$ of $\mc{K}$, that is, $\ul{\mc{M}}$ is endowed with a rank-$2$ distribution together with a bundle complex structure \cite{Trautman1984,Trautman1985,Robinson1985,Penrose1986,Robinson1986,Robinson1989,Trautman1999}. This is particularly relevant to the study of solutions to the Einstein field equations, which, as beautifully demonstrated in \cite{Lewandowski1990}, can then be reduced to CR data on $\ul{\mc{M}}$ when the congruence is twisting i.e.\ $\kappa \wedge \dd \kappa \neq 0$ where $\kappa = g(k,\cdot)$.

In dimensions greater than four, such congruences have not been as prominent in Einstein spacetimes as they have in dimension four. The notable exceptions are the higher-dimensional generalisations of Robinson--Trautman and Kundt spacetimes, where the congruence is non-twisting, i.e.\ $\kappa \wedge \dd \kappa = 0$ with $\kappa = g(k,\cdot)$. In odd dimensions, it has been shown \cite{Ortaggio2007} that if the Weyl tensor satisfies
\begin{align}\label{eq:aligned}
W  (k, v , k, v) & = 0 \, , & \mbox{for any vector field $v$ such that $g(k,v)=0$,}
\end{align}
then the non-shearing congruence of null geodesics generated by $k$ is necessarily non-twisting. The even-dimensional case, however, has not been thoroughly investigated. The only known examples are Taub--NUT-type metrics of \cite{Bais1985,Awad2002}, as pointed out in \cite{Ortaggio2013}, and Einstein metrics on Fefferman spaces of CR manifolds, which were described in \cite{Leitner2007,Cap2008}, and where the congruence is generated by a null conformal Killing field.

The aim of the present article is to fill the gap in that respect and provide a detailed understanding of twisting non-shearing congruences of null geodesics in dimension $2m+2$ where $m>1$. For this purpose, we shall adopt the strategy and philosophy of \cite{Robinson1986,Penrose1986} and subsequent work \cite{Lewandowski1988,Lewandowski1991,nurowski93-phd,nurowski96,Nurowski1997}. For more analytical issues, see also \cite{Tafel1985,Lewandowski1990a,Hill2008,Schmalz2019}. In particular, we shall emphasise the conformally invariant aspect of these congruences: a congruence $\mc{K}$ of null curves can be expressed in terms of an \emph{optical geometry}   $(\mc{M},\mbf{c}, K)$, where $(\mc{M}, \mbf{c})$ is a conformal manifold and $K$ the null line distribution tangent to $\mc{K}$. The screen bundle $H_K := K^\perp / K$ inherits a bundle conformal structure $\mbf{c}_{H_K}$ from $\mbf{c}$. The congruence $\mc{K}$ is then geodesic if $K^\perp$ is preserved along $\mc{K}$, and in addition, non-shearing if $\mbf{c}_{H_K}$ is also preserved along $\mc{K}$ \cite{Robinson1983,Fino2020}.

On the other hand, as advocated by \cite{Nurowski2002,Trautman2002,Taghavi-Chabert2014,Fino2021},  one can start with an \emph{almost Robinson manifold} $(\mc{M}, \mbf{c}, N, K)$, that is, a Lorentzian conformal manifold of dimension $2m+2$ equipped with a totally null complex $(m+1)$-plane distribution $N$: it defines an optical structure $K$, and thus a congruence $\mc{K}$ of null curves, together with a bundle complex structure on the screen bundle $H_K$ compatible with $\mbf{c}_{H_K}$. Under suitable conditions, $(N, K)$ induces an almost CR structure on the (local) leaf space of $\mc{K}$, which is integrable if and only if $(N,K)$ is integrable. But, unlike in dimension four, $\mc{K}$ is shearing in general \cite{Trautman2002a,Mason2010} except in rare constructions such as the Fefferman conformal structure \cite{Fefferman1976,Fefferman1976a,Lee1986,Graham1987} and Taub--NUT-type metrics \cite{Bais1985,Awad2002,Alekseevsky2021}.

The converse problem seems a priori quite hopeless: when does an optical geometry single out an almost Robinson structure in even dimensions greater than four? As this article will reveal, there is a definite, and surprisingly natural, answer to this question provided the congruence is geodesic, twisting and non-shearing. The ramification into almost CR geometry, if somewhat simpler, will then prove as powerful as in dimension four, but will offer a new feature: the underlying almost CR structure may be \emph{non-integrable}.

The structure of the paper and its main results are as follows. The background material on conformal geometry and optical geometries is given in Sections \ref{sec:conf_geom} and \ref{sec:opt_geom}, where Theorem \ref{thm-int-cond>4} highlights the integrability condition for the existence of a non-shearing congruence of null geodesics.  Section \ref{sec:CR_geom} provides a fairly detailed account of partially integrable contact almost CR geometry. We extend in particular the definition of a \emph{CR--Einstein structure} to non-integrable almost CR geometry, and relate this concept to almost K\"{a}hler--Einstein metrics, notably in Proposition \ref{prop:aK2aCR}. We then review the notion of almost Robinson manifolds in Section \ref{sec-Robinson}. This leads naturally to the following theorem, which gives an invariant relation between twisting non-shearing congruences and almost CR structures:
\begin{thm}\label{thm-int-cond2m}
	Let $(\mc{M},\mbf{c},K)$ be a $(2m+2)$-dimensional conformal optical geometry, where $m>1$, with twisting non-shearing congruence of null geodesics $\mc{K}$. The following statements are equivalent:
	\begin{enumerate}
		\item The Weyl tensor satisfies
		\begin{align}\label{eq-weakestC}
		W  (k, v , k, v) & = 0 \, , & \mbox{for any sections $k$ of $K$, $v$ of $K^\perp$.}
		\end{align}
		\item The twist of $\mc{K}$ induces an almost Robinson structure $(N,K)$ on $\mc{M}$.
		\item The twist of $\mc{K}$ induces a partially integrable contact almost CR structure $(\ul{H},\ul{J})$ of positive definite signature on the (local) leaf space $\ul{\mc{M}}$ of $\mc{K}$.
	\end{enumerate}
\end{thm}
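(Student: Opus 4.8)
The plan is to fix a representative metric $g\in\mbf{c}$ and an adapted null frame, reduce the whole statement to a single identity coming from the trace-free part of the optical (Sachs-type) equation, and then read off the three equivalences from it. So let $k$ generate $\mc{K}$, set $\kappa=g(k,\cdot)$, complete it to a null pair $\kappa,\lambda$ with $\lambda(k)=1$ and an orthonormal screen frame $e_1,\dots,e_{2m}$ spanning a complement of $\langle k\rangle$ in $K^\perp$, and encode the congruence in the \emph{optical matrix} $\rho_{ij}:=g(\nabla_{e_j}k,e_i)$. The geodesic condition is $\nabla_k k\in\Gamma(K)$, and the non-shearing condition is that the symmetric trace-free part of $\rho_{ij}$ vanishes, so $\rho_{ij}=\theta\,\delta_{ij}+\tau_{ij}$, where $\theta\,\delta_{ij}$ is the pure-trace part and the skew part $\tau_{ij}=\rho_{[ij]}$ is the \emph{twist}; the twisting hypothesis says $\tau$ is nowhere zero, equivalently $\kappa\wedge(\dd\kappa)^{\wedge m}\ne 0$.

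The analytic core is the trace-free part of the propagation equation for $\rho$ along $k$, available from the optical-geometry formalism of Section~\ref{sec:opt_geom} (the even-dimensional, possibly twisting, counterpart of what underlies \eqref{eq:aligned} and Theorem~\ref{thm-int-cond>4}). Since $k$ is null, the Schouten tensor $\Rho$ enters this equation only through a pure-trace term $\Rho(k,k)\,\delta_{ij}$, and its symmetric trace-free part reads $\LieD_k\sigma_{ij}=-(\tau^2)_{(ij)_0}-W(k,e_i,k,e_j)$, where $(\tau^2)_{ij}=\tau_{ik}\tau_{kj}$ and $(\,\cdot\,)_{(ij)_0}$ is the symmetric trace-free part in $i,j$; note $W(k,e_i,k,e_j)$ is itself symmetric and trace-free in $i,j$, by the pair symmetry and tracelessness of the Weyl tensor, so \eqref{eq-weakestC} is equivalent to $W(k,e_i,k,e_j)=0$. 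Because $\mc{K}$ is non-shearing at every point, $\sigma\equiv 0$ and $\LieD_k\sigma=0$, so the equation collapses to
\begin{align*}
(\tau^2)_{(ij)_0}\;=\;-\,W(k,e_i,k,e_j)\,,
\end{align*}
which is the identity everything rests on.

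From it, (1) $\Leftrightarrow$ (2) is essentially algebraic: \eqref{eq-weakestC} holds iff $\tau_{ik}\tau_{kj}=\mu\,\delta_{ij}$ for some function $\mu$; as $\tau$ is a nowhere-zero real skew endomorphism of the positive-definite screen bundle $H_K$, this forces $\mu<0$, so $\tau$ is nondegenerate and $J:=(-\mu)^{-1/2}\tau$ is a $\mbf{c}_{H_K}$-orthogonal bundle complex structure, whose $+\mathrm{i}$-eigenbundle $E\subset H_K\otimes\C$ has preimage $N\subset K^\perp\otimes\C$ a totally null complex $(m+1)$-plane distribution with $N\cap\bar N=K\otimes\C$, i.e.\ the almost Robinson structure ``induced by the twist''; conversely such a structure forces $\tau^2\propto-\Id$, hence \eqref{eq-weakestC}. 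Conformal invariance is immediate, since $W$, $K$, $K^\perp$ are conformally invariant and normalizing $\tau$ removes its conformal weight. Then (2) $\Leftrightarrow$ (3) is the general Robinson/CR correspondence of Section~\ref{sec-Robinson}: $K^\perp/K$ descends to a rank-$2m$ distribution $\ul H$ on the local leaf space $\ul{\mc{M}}$ because $K^\perp$ is preserved along $\mc{K}$, and $J$ descends to $\ul J$ because $\LieD_k\tau$ is a pointwise multiple of $\tau$ — the skew-in-$(i,j)$ part of the optical equation carries no curvature contribution (the Weyl part is symmetric, the Schouten part pure-trace), so $\LieD_k\tau_{ij}$ is algebraic in $\rho,\tau$ and, with $\rho=\theta\,\Id+\tau$, proportional to $\tau$, which makes the normalized $J$ basic; the resulting $(\ul H,\ul J)$ is contact (because $\tau$ is nondegenerate), partially integrable (because $\mc{K}$ is geodesic and non-shearing), and of positive-definite signature (because $\mbf{c}_{H_K}$ is, with $\dd\kappa|_{\ul H}(\,\cdot\,,\ul J\,\cdot\,)$ a positive multiple of $\mbf{c}_{H_K}$ for the appropriate orientation of $J$), and conversely such an almost CR structure pulls back along $\mc{M}\to\ul{\mc{M}}$ to the almost Robinson structure of (2).

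I expect the main obstacle to lie in making the trace-free optical identity $(\tau^2)_{(ij)_0}=-W(k,e_i,k,e_j)$ genuinely clean in the conformally invariant setting — controlling frame-rotation terms, the coupling of the twist with the geodesic acceleration, and the irrelevance of the auxiliary null direction $\lambda$, so that no stray terms survive (some of this may already be packaged into Theorem~\ref{thm-int-cond>4}) — together with the careful verification of partial integrability and of the positive-definite signature in the descent to $\ul{\mc{M}}$; granting these, the step (1) $\Leftrightarrow$ (2) and the correspondence (2) $\Leftrightarrow$ (3) are routine. (For $m=1$ the screen bundle is two-dimensional and \eqref{eq-weakestC} is automatic, so all three statements always hold; the content is genuinely in $m>1$, though the argument is uniform.)
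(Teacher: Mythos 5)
Your proposal is correct and follows essentially the same route as the paper: your trace-free Sachs identity $(\tau^2)_{(ij)_\circ}=-W(k,e_i,k,e_j)$ is exactly the content of Theorem \ref{thm-int-cond>4} projected to the screen bundle as in \eqref{eq-int-cond-non-shearing-pr}, the identification of \eqref{eq-weakestC} with $\tau^2\propto-\mathrm{Id}$ and the normalisation of $\tau$ to a compatible complex structure is the condition \eqref{eq-F^2}, and the descent to a partially integrable contact almost CR structure on the leaf space is the correspondence of Section \ref{sec-Robinson} (which the paper cites rather than reproves). The only slip is your parenthetical claim that twisting ($\tau$ nowhere zero) is equivalent to $\kappa\wedge(\dd\kappa)^m\neq0$ — maximal twist is not assumed but is a consequence of \eqref{eq-weakestC}, exactly as your own argument in fact shows.
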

Under the assumptions of Theorem \ref{thm-int-cond2m}, Section \ref{sec-curv-cond} delves into the relation between the Weyl tensor and the invariants of the almost CR structure -- see  Theorems \ref{thm:main_int}, \ref{thm:main-pre-flatness} and \ref{thm:main-flatness}, where we give conditions on the Weyl tensor leading to the integrability and flatness of the almost CR structure.

In Section \ref{sec:Weyl_tensor}, we include a brief discussion on additional prescriptions on the Weyl curvature as potential generalisations of the notion of algebraically special Weyl tensors from four to higher dimensions. One such candidate is used to prove, in Section \ref{sec:Einstein}, the following theorem:
\begin{thm}\label{thm:main-Einstein}
	Let $(\mc{M},\mbf{c},K)$ be a $(2m+2)$-dimensional conformal optical geometry, where $m>1$,  with twisting non-shearing congruence of null geodesics $\mc{K}$. Suppose that the Weyl tensor satisfies
	\begin{align*}
	W (k, v, k, \cdot) & = 0 \, , &  \mbox{for any sections $k$ of $K$, $v$ of $K^\perp$,}
	\end{align*}
	and that $\mbf{c}$ contains a metric $\wh{g}$ that is Einstein with Ricci scalar $(2m + 2)\Lambda$ on (some open subset of) $\mc{M}$. Then
	\begin{itemize}
		\item the twist of $\mc{K}$ induces an almost Robinson structure $(N,K)$ on $\mc{M}$;
		\item $\mc{M}$ is (locally) diffeomorphic to $\left( -\frac{\pi}{2} , \frac{\pi}{2} \right) \times \ul{\mc{M}}$, where $\ul{\mc{M}}$ is the (local) leaf space of $\mc{K}$, and $(N,K)$ induces a partially integrable contact almost CR structure $(\ul{H}, \ul{J})$ of positive definite signature on $\ul{\mc{M}}$;
		\item there is a distinguished contact form $\ul{\theta}^0$ such that $(\ul{H}, \ul{J},\ul{\theta}^0)$ is almost CR--Einstein -- in particular, the Webster--Ricci scalar is given by $\ul{\Sc} = m \ul{\Lambda} +  \| \ul{\Nh} \|^2_{\ul{h}}$, where $\ul{\Lambda}$ is some constant and $\| \ul{\Nh} \|^2_{\ul{h}}$ is the square of the norm of the Nijenhuis tensor of $(\ul{H}, \ul{J}, \ul{\theta}^0 )$ with respect to the Levi form $\ul{h}$ of $\ul{\theta}^0$;
		\item locally, the metric takes the form
		\begin{align*}
		\wh{g} & = \sec^2 \phi \, g \, , & \mbox{for $- \frac{\pi}{2} < \phi < \frac{\pi}{2}$,} 
		\end{align*}
		where, denoting the natural projection from $\mc{M}$ to $\ul{\mc{M}}$ by $\varpi$,
		\begin{align*}
		g & =2 \, \kappa \odot \lambda + h  \, , \\
		\kappa & = 2 \, \varpi^* \ul{\theta}^0 \, , &
		h & = \varpi^* \ul{h} \, , &
		\lambda & = \dd \phi + \lambda_0 \, \varpi^* \ul{\theta}^0 \, , 
		\end{align*}
		with
		\begin{multline*}
		\lambda_0 = \frac{\ul{\Lambda}}{2m+2}   +   \left( \frac{\Lambda}{2m+1}  - \frac{\ul{\Lambda} }{2m+2} \right)\left(  \sum_{j=0}^{m} a_{j} \cos^{2 j} \phi   -2  a_{m}  \cos^{2m+2} \phi \right) \\
		+ \ul{c} \cos^{2m+1} \phi \sin \phi \, ,
		\end{multline*}
		for some constant $\ul{c}$ and 
		\begin{align*}
		a_{0} = 1  \, , & & a_{j} =  \frac{2m-2j+4}{2m-2j+1 }  a_{j-1} \, , & & j = 1, \ldots, m \, .
		\end{align*}
	\end{itemize}
	
	Further, the following statements are equivalent:
	\begin{enumerate}
		\item The Weyl tensor satisfies
		\begin{align*}
		W  (k, u , v, w) & = 0 \, , & \mbox{for any sections $k$ of $K$, $u,v,w$ of $N$;}
		\end{align*}
		\item $(N,K)$ is integrable;
		\item $(\ul{H},\ul{J})$ is integrable.
	\end{enumerate}
\end{thm}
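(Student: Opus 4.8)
The plan is to prove $(2)\Leftrightarrow(3)$ first, and then $(1)\Leftrightarrow(3)$, the latter carrying the substantive content. For $(2)\Leftrightarrow(3)$ I would invoke the general correspondence, set up in Sections \ref{sec:CR_geom} and \ref{sec-Robinson}, between an almost Robinson structure $(N,K)$ on $\mc{M}$ and the partially integrable contact almost CR structure it induces on the leaf space $\ul{\mc{M}}$. Under the standing hypotheses, $(N,K)$ and hence $(\ul{H},\ul{J})$ already exist (by the first part of the theorem, itself an instance of Theorem \ref{thm-int-cond2m}), and integrability of the complex distribution $N$, i.e.\ $[\Gamma(N),\Gamma(N)]\subseteq\Gamma(N)$, projects along $\varpi$ to involutivity of $\ul{H}^{1,0}$, and conversely lifts back; equivalently, the Nijenhuis tensors of $(N,K)$ and of $(\ul{H},\ul{J})$ are pullbacks of one another along $\varpi$, so they vanish together. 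Hence $(N,K)$ is integrable iff $(\ul{H},\ul{J})$ is.

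For $(1)\Leftrightarrow(3)$ I would work from the explicit metric already obtained, $g=2\,\kappa\odot\lambda+h$ with $\kappa=2\varpi^*\ul{\theta}^0$, $h=\varpi^*\ul{h}$, $\lambda=\dd\phi+\lambda_0\,\varpi^*\ul{\theta}^0$ on $\left(-\tfrac{\pi}{2},\tfrac{\pi}{2}\right)\times\ul{\mc{M}}$, and compute its Levi-Civita connection and Riemann tensor in the adapted coframe, expressing the result through the Tanaka--Webster connection of $(\ul{H},\ul{J},\ul{\theta}^0)$, its pseudo-Hermitian torsion, its Webster curvature, the Nijenhuis tensor $\ul{\Nh}$, and the profile $\lambda_0$. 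Since the $(1,3)$-Weyl tensor is a conformal invariant it is the same whether computed from $g$ or from the Einstein metric $\wh{g}=\sec^2\phi\,g$, and the condition $(1)$ is conformally invariant. My expectation is that after imposing the Einstein equations and the hypothesis $W(k,v,k,\cdot)=0$ — both of which were already used to pin down $\lambda_0$ — the components $W(k,u,v,w)$ with $k\in\Gamma(K)$ and $u,v,w\in\Gamma(N)$ reduce to an expression built solely from $\varpi^*\ul{\Nh}$, with a nowhere-vanishing $\phi$-dependent coefficient and no residual contribution from the torsion or the Webster curvature; these components then vanish identically iff $\ul{\Nh}=0$. Since $(\ul{H},\ul{J})$ is partially integrable by construction, $\ul{\Nh}=0$ is precisely the condition that it be integrable, which gives $(1)\Leftrightarrow(3)$, and together with $(2)\Leftrightarrow(3)$ the full statement. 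Structurally this is the Einstein specialisation of the general curvature criterion of Theorem \ref{thm:main_int}; the role of the Einstein hypothesis, together with $W(k,v,k,\cdot)=0$ (stronger than \eqref{eq-weakestC}), is exactly that the integrability obstruction is then detected by the single curvature condition $(1)$. Equivalently, one may phrase the same fact as a higher-dimensional Goldberg--Sachs-type propagation: divergence-freeness of the Weyl tensor of $\wh{g}$ (the contracted second Bianchi identity for an Einstein metric) together with $W(k,v,k,\cdot)=0$ and the geodesic, non-shearing, twisting conditions forces the derivative of the components $W(k,u,v,w)$ along $\mc{K}$ to be algebraic in those components, so their vanishing on the transversal $\ul{\mc{M}}$ — where the datum is $\ul{\Nh}$ — propagates over $\mc{M}$.

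The main obstacle is the curvature bookkeeping in the second step. After expanding the connection and Riemann tensor of $g$ in the $(\kappa,\lambda,h)$-coframe and carrying along the $\cos^{2j}\phi$ and $\cos^{2m+1}\phi\sin\phi$ terms in $\lambda_0$, one must verify that \emph{no} contraction of $\ul{\Nh}$ with the pseudo-Hermitian torsion or with the Webster curvature leaks into the components $W(k,u,v,w)$, and that the many terms which are not a priori proportional to $\varpi^*\ul{\Nh}$ are annihilated by the hypotheses already in force rather than by a concealed extra condition — in particular, keeping the partial-integrability torsion from masquerading as an integrability obstruction, and checking that the $\phi$-dependent coefficient multiplying $\varpi^*\ul{\Nh}$ is genuinely nowhere zero on $\left(-\tfrac{\pi}{2},\tfrac{\pi}{2}\right)$. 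Much of this computation can be imported from the proof of Theorem \ref{thm:main_int}, with the Einstein reductions applied on top.
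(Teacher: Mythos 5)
There is a genuine gap: your proposal only proves the final ``Further'' equivalence and treats everything before it as already available. You write that you would ``work from the explicit metric already obtained'' and that the Einstein equations and $W(k,v,k,\cdot)=0$ ``were already used to pin down $\lambda_0$'' --- but establishing exactly those bullet points \emph{is} the substantive content of the theorem, and it is not an instance of Theorem \ref{thm-int-cond2m} (that theorem gives only the first bullet, the existence of the twist-induced almost Robinson structure). The paper's proof is the eight-step integration of the Einstein field equations in Section \ref{sec:Computations} (summarised as Theorem \ref{thm:Ricci-deg}): solving $\ddot{\varphi}-\dot{\varphi}^2=1$ to get the conformal factor $\sec^2\phi$ and the interval $\left(-\frac{\pi}{2},\frac{\pi}{2}\right)$; imposing the Weyl condition \eqref{eq:W_I_inv} so that the ODE of Step 2 forces $\lambda_\alpha=0$ when $m>1$; Steps 3--4 yielding $\ul{A}_{\alpha\beta}=0$, $\ul{\nabla}^{\gamma}\ul{\Nh}_{\gamma(\alpha\beta)}=0$ and the tracefree Einstein-type condition, with constancy of $\ul{\Lambda}$ supplied by Proposition \ref{prop:CR-Einstein_Ric}; Steps 5--6 producing the explicit $\lambda_0$ with the coefficients $a_j$; Step 7 showing $\ul{c}$ is constant; and Step 8 ruling out pure radiation. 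None of this appears in your proposal, even in outline, so the main assertions of the theorem are assumed rather than proved.

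For the part you do address, your route coincides with the paper's: $(2)\Leftrightarrow(3)$ is the general Robinson/CR correspondence of Section \ref{sec-Robinson}, and $(1)\Leftrightarrow(3)$ follows because the relevant Weyl components are algebraically the Nijenhuis tensor, which is exactly Theorem \ref{thm:main_int} via \eqref{eq:W-N} (and, in the Einstein setting, \eqref{eq:Wk1}, $W_{\gamma}{}^{0}{}_{\alpha\beta}=2\,\ii\,\ul{\Nh}_{\alpha\beta\gamma}$). Two of your worries dissolve on inspection of these formulas: the proportionality factor is the constant $2\ii$, not a $\phi$-dependent function that might vanish, and no torsion or Webster-curvature terms ``leak'' into these components --- that identity already holds at the level of Theorem \ref{thm-int-cond2m}, before any Einstein reduction, so the Goldberg--Sachs-type propagation argument you sketch as an alternative is unnecessary. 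To complete the proof you must supply (or at least carry out in outline) the integration of the Einstein equations that produces the metric form and $\lambda_0$, rather than citing it as prior knowledge.
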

The theorem above thus tells us that any Einstein metric that admits a twisting non-shearing congruence of null geodesics with curvature prescription \eqref{eq-weakestC} depends on the three parameters $\Lambda$, $\ul{\Lambda}$ and $\ul{c}$. This should be contrasted with the situation in dimension four, where the range of solutions is much larger. Further results under weaker assumptions on the Ricci tensor are given in Theorem \ref{thm:Ricci-deg}. Using the results of Section \ref{sec:CR_geom}, Theorem \ref{thm:main-Einstein} also provides a way of constructing examples of Einstein almost Robinson manifolds as lifts of almost CR--Einstein structures over almost K\"{a}hler--Einstein manifolds.

In Section \ref{sec:spec}, we relate the Einstein metrics of Theorem \ref{thm:main-Einstein} for certain values of the parameters $\Lambda$, $\ul{\Lambda}$ and $\ul{c}$ to Fefferman--Einstein metrics in Section \ref{sec:Feff-Einstein} and Taub--NUT-type metrics in Section \ref{sec:Taub-NUT}. These metrics are well-known when the almost Robinson structure is integrable, but to the author's knowledge, the solutions in the non-integrable case are new, and have no analogues in dimension four.

Section \ref{sec:further_prop} contains results on additional geometric structures on the Einstein manifold of Theorem \ref{thm:main-Einstein}, notably on the existence of a distinguished conformal Killing field in Proposition \ref{prop:Killing} and on the properties of a dual almost Robinson structure in Proposition \ref{prop:aRob2}.

Finally, we briefly discuss the possible generalisations to different metric signatures in Section \ref{sec:diff_sign}. We have relegated the computation of the curvature tensors to Appendix \ref{app:computations}.

\paragraph{Acknowledgements}
	The author would like to thank Rod Gover for useful conversations. Parts of the results in this article were presented at the workshop ``Twistors and Loops Meeting in Marseille" that took place in September 2019, at CIRM, Marseille, France. Both the present article and the recent paper \cite{Alekseevsky2021}, which overlap regarding some aspects and content of this topic, were written independently and simultaneously.

The author declares that this work was partially supported by the grant 346300 for IMPAN from the Simons Foundation and the matching 2015-2019 Polish MNiSW fund. He was also supported by a long-term faculty development grant from the American University of Beirut for his visit to IMPAN, Warsaw, in the summer 2018, where this research was partly conducted.

This is a post-peer-review, pre-copyedit version of an article published in \emph{Annali di Matematica Pura ed Applicata (1923 -)}. The final authenticated version is available online at:\\
\url{http://dx.doi.org/10.1007/s10231-021-01133-2}.

\section{Conformal geometry}\label{sec:conf_geom}
Let $(\mc{M}, \mbf{c})$ be an oriented and time-oriented conformal smooth manifold of Lorentzian signature, i.e.\ $(+,\ldots,+,-)$, and of dimension $n+2$. Most of our notation and conventions will be relatively standard in the field of differential geometry. Pullback and pushforward maps will be adorned with an upper and lower $*$ respectively, the Lie derivative along a vector field $v$ will be denoted $\mathsterling_v$ and so on. The $k$-th exterior power of the cotangent bundle $T^* \mc{M}$ will be denoted by $\bigwedge^k T^* \mc{M}$, and its $k$-th symmetric power by $\bigodot^k T^* \mc{M}$.  For any two $1$-forms $\alpha$ and $\beta$, we take the convention that $\alpha \wedge \beta = \frac{1}{2}(\alpha \otimes \beta - \beta \otimes \alpha )$  and $\alpha \odot \beta = \frac{1}{2}(\alpha \otimes \beta + \beta \otimes \alpha )$. We shall also denote the space of sections of a vector bundle $E$ by $\Gamma(E)$. We write ${}^\C E$ for the complexification of $E$. For any subbundle $F$ of $E$, $\Ann(F)$ will denote the subbundle of $E^*$ consisting of elements annihilating sections of $F$. In abstract index notation, sections of $T \mc{M}$, respectively, $T^* \mc{M}$ will be adorned with upper, respectively, lower minuscule Roman indices starting from the beginning of the alphabet, e.g. \ $v^a \in \Gamma(T \mc{M})$, and $\xi_{a b} \in  \Gamma(\bigotimes^2 T^* \mc{M})$. Symmetrisation will be denoted by round brackets, and skew-symmetrisation by square brackets, e.g.\ $\lambda_{(a b)} = \frac{1}{2} \left( \lambda_{a b} + \lambda_{b a} \right)$ and $\lambda_{[a b]} = \frac{1}{2} \left( \lambda_{a b} - \lambda_{b a} \right)$. These conventions will also be applied to the other types of geometries appearing in this article.

Following \cite{Bailey1994}, one can naturally introduce density bundles, denoted $\mc{E}[w]$ for any $w \in \R$. In particular, we interpret $\mc{E}[1]$ as the bundle of conformal scales: sections of $\mc{E}[1]$ corresponds to metrics in the conformal class. The correspondence is achieved via the \emph{conformal metric}, $\bm{g}_{ab}$, which is a non-degenerate section of $\bigodot^2 T^* \mc{M} \otimes \mc{E} [2]$, preserved by the Levi-Civita connection of any metric in $\mbf{c}$: if $s \in \mc{E}[1]$ is a conformal scale, then the corresponding metric in $\mbf{c}$ is given by $g_{a b} = s^{-2} \bm{g}_{a b}$. This conformal metric allows us to identify sections of $T \mc{M}$ with $T^* \mc{M}\otimes \mc{E} [2]$. In this conformal setting, indices will be lowered and raised with $\bm{g}_{a b}$ and its inverse $\bm{g}^{a b}$ respectively, but with a choice of metric $g$ in $\mbf{c}$, we shall often use the metric isomorphism $T \mc{M} \accentset{g}{\cong} T^* \mc{M}$. The subbundle of $\bigodot^k T^* \mc{M}$ consisting of tracefree elements will be denoted by $\bigodot^k_\circ T^* \mc{M}$.  The tracefree part of tensors with respect to $\bm{g}_{a b}$ will be adorned with a ring, e.g. either as $\lambda_{(a b)_\circ}$ or as $\left( \lambda_{a b} \right)_\circ$.

If two metrics $g$ and $\wh{g}$ in $\mbf{c}$ are related by
\begin{align}\label{eq-conf-res}
\widehat{{g}} & = \ee^{2 \varphi} {g} \, , & \mbox{for some smooth function $\varphi$ on $\mc{M}$,}
\end{align}
their respective Levi-Civita connections $\nabla$ and $\wh{\nabla}$ are related by
\begin{align}
\widehat{\nabla}_a \alpha_b & = \nabla_a \alpha_b + (w - 1) \Upsilon_a \alpha_b - \Upsilon_b \alpha_a + \Upsilon_c \alpha^c \bm{g}_{a b} \, , & \alpha_a \in \Gamma(T^* \mc{M}[w])  \, ,\label{eq-conf-tr-form}
\end{align}
where $\Upsilon_a := \nabla_a \varphi$. The covariant exterior derivative, denoted $\dd^{\nabla}$, is given by
\begin{align*}
(\dd^{\nabla} \alpha)_{a b_1 \ldots b_k} & = \nabla_{[a} \alpha_{b_1 \ldots b_k]} \, , & \alpha_{b_1 \ldots b_k}  \in \Gamma(\bigwedge^k T^* \mc{M}[w]) \, .
\end{align*}
By convention, we take the Riemann tensor of a given metric $g_{a b}$ in $\mbf{c}$ to be defined by
\begin{align}
2 \nabla_{[a} \nabla_{b]} V^c & =: R_{a b}{}^{c}{}_{d} V^d \, , &  V^a \in \Gamma(T \mc{M}) \, . \label{eq:Riem}
\end{align}
It decomposes as
\begin{align}\label{eq-Riem_decomp}
R_{a b c d} & = W_{a b c d} + 4 \, \bm{g}_{[a|[c} \Rho_{d]|b]} \, ,
\end{align}
where its tracefree part $W_{a b c d}$ is the Weyl tensor, and $\Rho_{a b}$ is the Schouten tensor, given in terms of the Ricci tensor $\Ric_{a b} := R_{c a}{}^{c}{}_{b}$ and the Ricci scalar $\Sc := \Ric_{ab} \bm{g}^{a b}$ by
\begin{align*}
\Rho_{a b} & := \frac{1}{n} \left( \Ric_{a b} - \frac{\Sc}{2(n+1)} \bm{g}_{a b} \right) \, .
\end{align*}
We also define the Schouten scalar $\Rho := \Rho_{a b} \bm{g}^{a b}$. While the Weyl tensor is conformally invariant, the Schouten tensor transforms as
\begin{align}
\widehat{\Rho}_{a b} &  = \Rho_{a b} - \nabla_{a}\Upsilon_{b} + \Upsilon_{a}\Upsilon_{b} - \frac{1}{2} \Upsilon^{c} \Upsilon_{c} \bm{g}_{a b} \, , \label{eq:Rho_transf}
\end{align}
under the change \eqref{eq-conf-res}.

\section{Optical geometries}\label{sec:opt_geom}
\subsection{Basic definitions and facts}
We summarise the exposition given in \cite{Fino2020}, which also draws on \cite{Robinson1983}. Let $(\mc{M}, \mbf{c},K)$ be a \emph{(conformal) optical geometry} of dimension $n+2$, that is, $(\mc{M},\mbf{c})$ is a  time-oriented and oriented Lorentzian conformal manifold, and $K$ a null line distribution. This line distribution is oriented by virtue of the orientation and time-orientation of $\mc{M}$. It is a subbundle of its orthogonal complement  $K^\perp$ with respect to $\mbf{c}$, i.e.\
\begin{align}\label{eq-K-filt}
K \subset K^\perp \subset T \mc{M} \, .
\end{align}
We call the oriented rank-$n$ quotient
\begin{align*}
H_K & := K^\perp / K \, ,
\end{align*}
the \emph{screen bundle} of $K$.  The conformal structure $\mbf{c}$ on $\mc{M}$ induces a conformal structure $\mbf{c}_{H_K}$ of Riemannian signature on $H_K$, and in particular, a conformal metric $\bm{h}$ on $H_K$, that is, the non-degenerate section of $\bigodot^2 H_K^*$ defined by
\begin{align*}
\bm{h} (v + K , w + K ) & := \bm{g}(v,w)  \, , & \mbox{for any $v, w \in \Gamma(K^\perp)$.}
\end{align*}
Any non-vanishing section of $K$ will be referred to as an \emph{optical vector field}, and any non-vanishing section of $\Ann(K^\perp)$ as an \emph{optical $1$-form}.

In abstract index notation, we shall use upper, respectively, lower, minuscule Roman indices starting from the middlle of the alphabet, i.e.\ $i, j, k, \ldots$ for sections of $H_K$, respectively, $H_K^*$. Thus, the conformal metric above may be denoted $\bm{h}_{i j}$. If $s$ is a conformal scale, then $h_{i j} = s^{-2} \bm{h}_{i j}$ is a metric in $\mbf{c}_{H_K}$.

Concretely, it will be convenient to fix a metric $g$ in $\mbf{c}$ and introduce a null line distribution $L$ dual to $K$ to split the filtration \eqref{eq-K-filt} so that
\begin{align*}
T \mc{M} & = L \oplus H_{K,L} \oplus K \, , & \mbox{where $H_{K,L}  :=  K^\perp \cap L^\perp $.}
\end{align*}
Sections of $H_K$ can then be identified with sections of $H_{K,L}$, and we shall use the same index notation for sections of $H_{K,L}$ as for those of $H_{K}$. We shall also introduce a frame $\{\ell, e_i, k\} = \{e_0, e_i, e^0\}$ \emph{adapted} to the optical geometry $(\mc{M}, \mbf{c}, K)$, where $k$ and $\ell$ are sections of $K$ and $L$ respectively, such that $g(k, \ell)=1$,  and $\{e_i \}$, $i=1, \ldots ,n$, form an orthonormal frame for $H_{K,L}$. The coframe dual to it will be denoted $\{\kappa, \theta^i, \lambda \} = \{\theta^0, \theta^i,\theta_0\}$. With this notation, the metric $g$ takes the form
\begin{align*}
g & = 2  \, \theta^0 \odot \theta_0 + h_{i j} \theta^i \odot \theta^j \, .
\end{align*}
For any section $\alpha_a$ of $T^* \mc{M}$, we shall write
\begin{align*}
& \alpha^0 = \alpha( k) \, , & & \alpha_i = \alpha (e_i) \, , & & \alpha_0 = \alpha(\ell) \, ,
\end{align*}
and similarly for tensor fields of other valences. These indices may be viewed abstractly. The tracefree part of a tensor $T_{i j}$, say, with respect to $\bm{h}^{i j}$ will also be denoted by $\left( T_{i j} \right)_\circ$ or $T_{(i j)_\circ}$.

\subsection{Congruences of null geodesics}
The foliation by null curves tangent to $K$, i.e.\ the aggregate of the integral curves of any optical vector field $k$ of $K$, will be referred to as the \emph{congruence} $\mc{K}$ of  null curves associated to $K$. These curves are oriented since $K$ is oriented.

Henceforth, we assume that the curves of $\mc{K}$ are \emph{geodesics}. This property can be defined as follows: the weighted $1$-form $\bm{\kappa} = \bm{g}(k,\cdot)$ corresponding to any optical vector field $k$ satisfies
\begin{align}\label{eq:geod}
\mathsterling_k \bm{\kappa}  (v)  & = 0 \, , & v & \in \Gamma(K^\perp) \, .
\end{align}
Condition \eqref{eq:geod} tells us that any optical $1$-form is preserved along the flow of $k$, and in particular, descends to a $1$-form on the $(n+1)$-dimensional leaf space $\ul{\mc{M}}$ of $\mc{K}$. This means that $\ul{\mc{M}}$ inherits a rank-$n$ distribution $\ul{H}$ from the screen bundle $H_K$.

This leaf space $(\ul{\mc{M}},\ul{H})$ inherits additional structures on $\ul{\mc{M}}$ from the invariants of $\mc{K}$. Notably, for any optical vector field $k$, we introduce \cite{Fino2020}
\begin{enumerate}
	\item the \emph{twist} of $k$, that is, the section $\bm{\tau}$ of $\bigwedge^2 H_K^* \otimes \mc{E} [2]$ defined by
	\begin{align}
	\bm{\tau} (v + K , w + K) & := \dd^\nabla \bm{\kappa} ( v , w) \, , & v,w & \in \Gamma(K^\perp) \, ; \label{eq-conf-twist}
	\end{align}
	\item the \emph{shear} of $k$, that is, the section $\bm{\sigma}$ of $\bigodot^2_\circ H^*_K \otimes \mc{E} [2]$ defined by
	\begin{align}
	\bm{\sigma} (v +K , w+K) \bm{\kappa} & := \frac{1}{2} \left( \mathsterling_{k} \bm{g} (v , w ) \bm{\kappa} - \bm{g} (v , w )  \mathsterling_{k} \bm{\kappa} \right) \, , & v, w & \in \Gamma(K^\perp) \, .\label{eq-conf-shear} 
	\end{align}
\end{enumerate}
It is clear that any rescaling of $k$ induces a rescaling of its twist and shear. These definitions thus extend to the notions of \emph{twist} and \emph{shear of the congruence $\mc{K}$}, both of which are conformal invariants.

With a choice of metric $g$ in $\mbf{c}$, we can also define the \emph{expansion} of $k$ to be the smooth function $\epsilon$ given by
\begin{align*}
\epsilon \, \kappa & :=   \kappa \, \mathrm{div}  k -  \nabla_{k} \kappa   \, ,
\end{align*}
where $\kappa = g(k, \cdot)$ and $\mathrm{div}  k = \nabla_a k^a$. Again, there is a well-defined notion of expansion of $\mc{K}$. While the expansion is not conformally invariant, one can always choose a metric $g$ in the conformal class for which the congruence $\mc{K}$ generated by $k$ is non-expanding, $\epsilon = 0$. In fact, locally, this defines a subclass $\accentset{n.e.}{\mbf{c}}$ of metrics in $\mbf{c}$ with the property that whenever $g$ is in $\accentset{n.e.}{\mbf{c}}$, the congruence $\mc{K}$ is non-expanding. Any two metrics in $\accentset{n.e.}{\mbf{c}}$ differ by a factor constant along $K$ -- see \cite{Fino2020}.

Let us now review the geometric interpretation of the twist and shear of $\mc{K}$. The twist of $\mc{K}$, if non-zero, induces a skew-symmetric bundle map on $\ul{H}$ whose rank is given by the rank of $\bm{\tau}$ -- this is clear from the geodesic property \eqref{eq:geod}, the defining equation \eqref{eq-conf-twist}, and the naturality of the exterior derivative. Let us now assume that $\mc{M}$ has dimension $2m+2$ and $\mc{K}$ is maximally twisting, that is, $\bm{\tau}$ has maximal rank. Choose a metric $g$ in $\accentset{n.e.}{\mbf{c}}$, and an optical vector field $k$ for which the geodesics of $\mc{K}$ are affinely parametrised. Then the $1$-form $\kappa = g(k, \cdot)$ satisfies $\mathsterling_k \kappa = 0$ and $\kappa \wedge ( \dd \kappa )^m$ is non-zero. This means that $\kappa$ is the pullback of a $1$-form $\ul{\theta}^0$ on $\ul{\mc{M}}$ that annihilates $\ul{H}$ and satisfies $\ul{\theta}^0 \wedge (\dd \ul{\theta}^0)^{m} \neq 0$, i.e.\ $\ul{\theta}^0$ is a \emph{contact form}. Thus, the distribution $\ul{H}$ on $\ul{\mc{M}}$ must be \emph{contact}, i.e.\ $\ul{H}$ bracket-generates the tangent space of $\mc{M}$ at every point, i.e.\ $T \ul{\mc{M}} = \ul{H} + [ \ul{H}, \ul{H} ]$.

On the other hand, if the congruence $\mc{K}$ is non-shearing, i.e.\ $\bm{\sigma}=0$, it is immediate from \eqref{eq-conf-shear} that the conformal structure induced on $H_K$ is preserved along the flow of any generator of $\mc{K}$. In this case, the distribution $\ul{H}$ on $\ul{\mc{M}}$ is endowed with a conformal structure $\ul{\mbf{c}}_{\ul{H}}$. More precisely, there is a one-to-one correspondence between metrics in $\accentset{n.e.}{\mbf{c}}$ and metrics in $\ul{\mbf{c}}_{\ul{H}}$.

Thus, combining these two properties, we conclude that a maximally twisting non-shearing congruence of null geodesics $\mc{K}$ induces a so-called \emph{sub-conformal contact structure} $(\ul{H}, \ul{\mbf{c}}_{\ul{H}})$ on its leaf space $\ul{\mc{M}}$, i.e.\ $\ul{H}$ is contact and is endowed with a conformal structure $\ul{\mbf{c}}_{\ul{H}}$ \cite{Alekseevsky2018,Alekseevsky2021,Fino2020}. In Section \ref{sec:CR_geom}, we shall equip $\ul{H}$ with a bundle complex structure. 

\subsection{Integrability condition}
It is well-known that if $(\mc{M},\mbf{c}, K)$ is a four-dimensional  conformal optical geometry with non-shearing congruence of null geodesics $\mc{K}$, then, for any optical vector field $k$, the Weyl tensor satisfies
\begin{align*}
\bm{\kappa}_{[a} W_{b]ef[c} \bm{\kappa}_{d]} k^e k^f & = 0 \, ,& \mbox{where $\bm{\kappa}_a = \bm{g}_{a b} k^b$.}
\end{align*}
In higher dimensions, the analogous result is given by the following proposition:
\begin{thm}\label{thm-int-cond>4}
	Let $(\mc{M},\mbf{c},K)$ be an $(n+2)$-dimensional conformal optical geometry, where $n>2$,  with non-shearing congruence of null geodesics $\mc{K}$. Then, for any optical vector field $k$, the Weyl tensor satisfies
	\begin{align}\label{eq-int-cond-non-shearing}
	4 \, \bm{\kappa}_{[a} W_{b]ef[c}  \bm{\kappa}_{d]} k^e k^f & = \bm{\tau}_{a b e} \bm{\tau}^{e}{}_{c d} + \frac{4}{n} \bm{\tau}_{[a}{}^{e f} \bm{g}_{b][c} \bm{\tau}_{d] e f} \, ,
	\end{align}
	where $\bm{\kappa}_a = \bm{g}_{a b} k^b$ and  $\bm{\tau}_{abc} := 3 \, \bm{\kappa}_{[a} \nabla_{b} \bm{\kappa}_{c]}$.
\end{thm}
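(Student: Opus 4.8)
The plan is to reduce to a convenient gauge, determine the algebraic shape of $\nabla_a\bm{\kappa}_b$ forced by the hypotheses, and then read off the Weyl tensor from a once-contracted Ricci identity. Since the twist of $\mc{K}$ is a conformal invariant and both sides of \eqref{eq-int-cond-non-shearing} scale the same way under a rescaling of $k$ and are independent of the metric in $\mbf{c}$, it suffices to prove the identity after fixing $g$ in the non-expanding subclass $\accentset{n.e.}{\mbf{c}}$ and fixing $k$ so that the geodesics of $\mc{K}$ are affinely parametrised. Then $\nabla_k k=0$, so (as $\kappa:=g(k,\cdot)$ is null) $\nabla_k\kappa=\mathsterling_k\kappa=0$, the expansion vanishes, and the weighted quantities reduce to $\kappa_a=g_{ab}k^b$ and $\tau_{abc}=3\kappa_{[a}\nabla_b\kappa_{c]}$.

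First I would pin down $\nabla_a\kappa_b$. The geodesic equation and nullity of $\kappa$ give $k^a\nabla_a\kappa_b=0$ and $k^a\nabla_b\kappa_a=0$; the non-shearing and non-expanding conditions say that $\mathsterling_k g_{ab}=2\nabla_{(a}\kappa_{b)}$ vanishes on $K^\perp\otimes K^\perp$, hence $\nabla_{(a}\kappa_{b)}=\kappa_{(a}\rho_{b)}$ for a $1$-form $\rho$ with $\rho(k)=0$; and $\mathsterling_k\kappa=0$ forces $k\lrcorner\dd\kappa=0$, so the $2$-form $\nabla_{[a}\kappa_{b]}$ is annihilated by $k$. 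Splitting $T\mc{M}=L\oplus H_{K,L}\oplus K$, these statements combine into
\[
\nabla_a\kappa_b=\tfrac12\,\tau^{\mathrm h}_{ab}+\kappa_a\beta_b+\gamma_a\kappa_b\,,
\]
with $\tau^{\mathrm h}_{ab}=-\tau^{\mathrm h}_{ba}$ horizontal ($\tau^{\mathrm h}_{ab}k^b=0$) representing the twist of $\mc{K}$ from \eqref{eq-conf-twist}, and $1$-forms $\beta,\gamma$ annihilating $k$. In particular $\bm{\tau}_{abc}=\tfrac32\,\kappa_{[a}\tau^{\mathrm h}_{bc]}$, so $\bm{\tau}$ is horizontal and completely determined by $\tau^{\mathrm h}$.

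The Weyl tensor then enters via a once-contracted Ricci identity: applying the identity $2\nabla_{[a}\nabla_{b]}\kappa_c=-R_{abec}k^e$ (in the conventions of \eqref{eq:Riem}), contracting with $k^b$, and using $\nabla_k\kappa=0$ to trade the second covariant derivative of $\kappa$ for first-order data yields
\[
R_{befc}\,k^ek^f=(\nabla_b\kappa^e)(\nabla_e\kappa_c)+\nabla_k(\nabla_b\kappa_c)\,,
\]
whose left-hand side is symmetric in $b,c$ (this is the optical/Sachs system in disguise). Passing to the Weyl tensor by \eqref{eq-Riem_decomp} gives $W_{befc}k^ek^f=R_{befc}k^ek^f-\kappa_b\,\Rho_{ce}k^e-\kappa_c\,\Rho_{be}k^e+\bm{g}_{bc}\,\Rho_{ef}k^ek^f$. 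Now form $4\,\bm{\kappa}_{[a}W_{b]ef[c}\bm{\kappa}_{d]}k^ek^f$. Inside the double skew-symmetrisation $\kappa_{[a}(\,\cdot\,)_{b][c}\kappa_{d]}$ any term with $\kappa$ on a free $b$- or $c$-slot dies ($\kappa_{[a}\kappa_{b]}=0=\kappa_{[c}\kappa_{d]}$): this removes the two $\Rho$-terms linear in $\kappa$, and in $(\nabla_b\kappa^e)(\nabla_e\kappa_c)$ it leaves only $\tfrac14\tau^{\mathrm h}_b{}^e\tau^{\mathrm h}_{ec}$, the $\beta,\gamma$- and $\kappa$-cross-terms disappearing because $\tau^{\mathrm h}_{ea}k^e=0$, $\kappa_ek^e=0$ and $\beta(k)=\gamma(k)=0$. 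The term $\nabla_k(\nabla_b\kappa_c)$ drops out entirely: differentiating $\nabla_b\kappa_c=\tfrac12\tau^{\mathrm h}_{bc}+\kappa_b\beta_c+\gamma_b\kappa_c$ along $k$ and using $\nabla_k\kappa=0$ leaves only $\tfrac12\nabla_k\tau^{\mathrm h}_{bc}$ up to $\kappa$-contaminated terms, and $\nabla_k\tau^{\mathrm h}_{bc}$ itself differs from $\mathsterling_k\tau^{\mathrm h}_{bc}$ — which by $\mathsterling_k\dd\kappa=0$ is of the form $\kappa_{[b}\mu_{c]}$ — only by terms carrying $\kappa_b$ or $\kappa_c$. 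Hence
\[
4\,\bm{\kappa}_{[a}W_{b]ef[c}\bm{\kappa}_{d]}k^ek^f=\kappa_{[a}\,\tau^{\mathrm h}_{b]}{}^e\tau^{\mathrm h}_{e[c}\,\kappa_{d]}+4\,(\Rho_{ef}k^ek^f)\,\kappa_{[a}\bm{g}_{b][c}\kappa_{d]}\,.
\]
The first term equals $\bm{\tau}_{abe}\bm{\tau}^e{}_{cd}$ after substituting $\bm{\tau}_{abc}=\tfrac32\kappa_{[a}\tau^{\mathrm h}_{bc]}$ and using horizontality. For the second, tracing the displayed Ricci identity over $b,c$ gives the null Raychaudhuri relation $\Ric_{ef}k^ek^f=\tfrac14\tau^{\mathrm h}_{ef}\tau^{\mathrm h\,ef}$ (expansion and shear being zero), hence $\Rho_{ef}k^ek^f=\tfrac1n\Ric_{ef}k^ek^f$ since $\bm{g}_{ef}k^ek^f=0$; combined with $\bm{\tau}_{[a}{}^{ef}\bm{g}_{b][c}\bm{\tau}_{d]ef}=\tfrac14(\tau^{\mathrm h}_{ef}\tau^{\mathrm h\,ef})\,\kappa_{[a}\bm{g}_{b][c}\kappa_{d]}$, this turns the second term into $\tfrac4n\,\bm{\tau}_{[a}{}^{ef}\bm{g}_{b][c}\bm{\tau}_{d]ef}$, which is exactly the asserted identity.

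The conceptual input is slim — the once-contracted Ricci identity plus Raychaudhuri — so the hard part is the bookkeeping in the last step: getting the exact form of $\nabla_a\kappa_b$, verifying that every $\beta,\gamma$-term and, above all, the entire $\nabla_k(\nabla_b\kappa_c)$ contribution vanishes once contracted against $\kappa_{[a}(\,\cdot\,)_{b][c}\kappa_{d]}$, and tracking the combinatorial factors so that the coefficients $1$ and $\tfrac4n$ emerge. A tidy way to dispose of the $\nabla_k$-term without following it explicitly is to note that both sides of \eqref{eq-int-cond-non-shearing} are $\mathsterling_k$-covariant weighted tensors of the same type, so one may compute modulo $\mathsterling_k$, i.e. at a chosen point after also fixing a dual null direction $L$.
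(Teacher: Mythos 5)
Your argument is correct and is essentially the paper's own proof in a slightly different packaging: the same gauge choices (non-expanding representative, affinely parametrised $k$), the same key step (the Ricci identity contracted with $k$, which is what the paper performs by differentiating $\bm{\kappa}_{[a}(\nabla_{b]}\bm{\kappa}_{[c})\bm{\kappa}_{d]}=\bm{\kappa}_{[a}\bm{\tau}_{b][c}\bm{\kappa}_{d]}$ along $k$ and commuting derivatives), the same null Raychaudhuri relation from the trace, and the same passage to the Weyl tensor via \eqref{eq-Riem_decomp}. The one assertion that deserves an explicit line is that $\nabla_k\tau^{\mathrm h}_{bc}-\mathsterling_k\tau^{\mathrm h}_{bc}=-\tau^{\mathrm h}_{ec}\nabla_bk^e-\tau^{\mathrm h}_{be}\nabla_ck^e$ consists purely of $\kappa$-terms: this is true, but only because the quadratic pieces $-\tfrac12\tau^{\mathrm h}_{ec}\tau^{\mathrm h}{}_{b}{}^{e}$ and $-\tfrac12\tau^{\mathrm h}_{be}\tau^{\mathrm h}{}_{c}{}^{e}$ cancel each other (alternatively, invoke the $b\leftrightarrow c$ symmetry of $R_{befc}k^ek^f$, so that only $\nabla_k\bigl(\kappa_{(b}\rho_{c)}\bigr)$ can contribute, which is manifestly a $\kappa$-term).
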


\begin{proof}
	With no loss of generality, we work with a metric $g$ in $\accentset{n.e.}{\mbf{c}}$ so that $\mc{K}$ is non-expanding, and we choose an optical vector field $k$ for which the geodesic curves of $\mc{K}$ are affinely parametrised so that, with $\bm{\kappa}_a = \bm{g}_{a b} k^b$, we have
	\begin{align}
	k^b \nabla_b \bm{\kappa}_a & = 0 \, , \label{eq-aff-par} \\
	\bm{\kappa}_{[a} \left( \nabla_{b]} \bm{\kappa}_{[c} \right) \bm{\kappa}_{d]} & = \bm{\kappa}_{[a} \bm{\tau}_{b] [c} \bm{\kappa}_{d]} \, , \label{eq-to-prol}
	\end{align}
	where $\bm{\tau}_{b c}$ is a (weighted) $2$-form such that $\bm{\tau}_{a b c} = 3 \, \bm{\kappa}_{[a} \bm{\tau}_{b c]}$. It also satisfies $\bm{\tau}_{b c} k^c =0$. In other words, $\bm{\tau}_{a b}$ represents the twist $\bm{\tau}_{i j}$ of $k$ as a $2$-form on $\mc{M}$. Now, taking a covariant derivative of \eqref{eq-to-prol} along $k^a$, commuting the covariant derivatives, applications of the Leibniz rule, using \eqref{eq:Riem} and repeated applications of \eqref{eq-aff-par} leads to
	\begin{align*}
	\bm{\kappa}_{[a} R_{b] e f [c} \bm{\kappa}_{d]} k^{e} k^{f} 
	& =  \bm{\kappa}_{[a} \bm{\tau}_{b]}{}^{e} \bm{\tau}_{e [c} \bm{\kappa}_{d]} \, .
	\end{align*}
	Taking the trace of this expression gives
	\begin{align*}
	\Ric_{a b} k^{a} k^{b} & = \bm{\tau}_{a b} \bm{\tau}^{a b} \, ,
	\end{align*}
	so that using the expression for the Weyl tensor \eqref{eq-Riem_decomp} yields
	\begin{align*}
	\bm{\kappa}_{[a} W_{b] e f [c} \bm{\kappa}_{d]} k^{e} k^{f} & = \bm{\kappa}_{[a} \bm{\tau}_{b]}{}^{e} \bm{\tau}_{e [c} \bm{\kappa}_{d]} + \frac{1}{n} \bm{\tau}_{e f} \bm{\tau}^{e f} \bm{\kappa}_{[a} \bm{g}_{b] [c} \bm{\kappa}_{d]} \, .
	\end{align*}
	Finally, to obtain the expression \eqref{eq-int-cond-non-shearing}, we simply note that $\bm{\tau}_{a b c} = 3 \, \bm{\kappa}_{[a} \bm{\tau}_{b c]}$.
\end{proof}

The formula \eqref{eq-int-cond-non-shearing} can also be derived from the computations given in \cite{Ortaggio2007}. We shall soon give a geometric interpretation to it. But before we proceed, we need to introduce some additional geometric concepts.

\begin{rem}
	In dimension four, the RHS of condition \eqref{eq-int-cond-non-shearing} is always zero.
\end{rem}

\section{Partially integrable contact almost CR structures}\label{sec:CR_geom}
\subsection{Almost CR structures}
We recall some basic notions regarding almost CR geometry. By and large, we follow the conventions and approaches of \cite{Tanaka1975,Webster1978,Gover2005,Cap2008,Cap2010,Matsumoto2016,Case2020,Fino2021}, to which the reader is referred for more detailed accounts. Let $\ul{\mc{M}}$ be a $(2m+1)$-dimensional smooth manifold. An \emph{almost Cauchy--Riemann (CR) structure} on $\ul{\mc{M}}$ consists of a pair $(\ul{H},\ul{J})$ where $\ul{H}$ is a rank-$2m$ distribution and $\ul{J}$ a bundle complex structure on $\ul{H}$, i.e.\ $\ul{J} \circ \ul{J} = - \ul{\Id}$, where $\ul{\Id}$ is the identity map on $\ul{H}$. This means that the complexification ${}^\C \ul{H}$ of $\ul{H}$ splits as ${}^\C \ul{H} = \ul{H}^{(1,0)} \oplus \ul{H}^{(0,1)}$ where $\ul{H}^{(1,0)}$ and $\ul{H}^{(0,1)}$ are the rank-$m$ $\ii$-eigenbundle and $-\ii$-eigenbundle of $\ul{J}$ respectively. If $\ul{H}^{(1,0)}$ (or equivalently $\ul{H}^{(0,1)}$) is involutive or integrable,\footnote{We shall not distinguish between the two terms, involutive and integrable, here, brushing aside any analytic issues that may arise.} i.e.\ $[ \ul{H}^{(1,0)} , \ul{H}^{(1,0)} ] \subset \ul{H}^{(1,0)}$, we refer to $(\ul{H},\ul{J})$ simply as a \emph{CR structure}. When $m=1$, an almost CR structure is always integrable. An \emph{(almost) pseudo-Hermitian structure} on $\ul{\mc{M}}$ is an (almost) CR structure $(\ul{H},\ul{J})$ together with a choice of non-vanishing section of $\Ann(\ul{H})$.

We shall assume further that the almost CR structure is \emph{contact} or \emph{non-degenerate}, i.e.\ $\ul{H}$ is a contact distribution, and that it is \emph{partially integrable}, i.e.\ the bracket of two sections in $\ul{H}^{(1,0)}$ is a section of ${}^\C \ul{H}$. One can also describe such an almost CR structure as a sub-conformal contact structure $(\ul{H}, \ul{\mbf{c}}_{\ul{H}})$ equipped with a compatible bundle complex structure. Further equivalent descriptions can be found in the aforementioned references.

In order to make the description of $(\ul{\mc{M}}, \ul{H}, \ul{J})$ more concrete, let us fix a contact form $\ul{\theta}^0$. Then there exists a unique vector field $\ul{e}_0$, known as the \emph{Reeb vector field}, satisfying $\ul{\theta}^0 (\ul{e}_0) = 1$ and $\dd \ul{\theta}^{0} ( \ul{e}_0 , \cdot ) = 0$. It induces a splitting
\begin{align*}
{}^\C T \ul{\mc{M}} =  {}^\C \ul{L} \oplus \ul{H}^{(1,0)} \oplus \ul{H}^{(0,1)} \, ,
\end{align*}
where $\ul{L}$ is the real line distribution spanned by $\ul{e}_0$. Complete $\ul{e}_0$ to a (complex) frame $\{ \ul{e}_0 , \ul{e}_\alpha , \overline{\ul{e}}_{\bar{\beta}} \}$, $\alpha, \bar{\beta} = 1, \ldots , m$, adapted to $(\ul{H}, \ul{J})$, i.e.\ $\{ \ul{e}_\alpha \}$  and $\{ \overline{\ul{e}}_{\bar{\beta}} \}$, $\alpha, \bar{\beta} = 1, \ldots , m$, span $\ul{H}^{(1,0)}$ and $\ul{H}^{(0,1)}$ respectively. Denote by $\{ \ul{\theta}{}^0 , \ul{\theta}{}^\alpha , \overline{\ul{\theta}}{}^{\bar{\alpha}} \}$, the coframe dual to $\{ \ul{e}_0 , \ul{e}_\alpha , \overline{\ul{e}}_{\bar{\beta}} \}$, ${\alpha, \bar{\beta} = 1, \ldots , m}$. Then the contact form $\ul{\theta}^0$ satisfies $\dd \ul{\theta}^{0}  = \ii \ul{h}_{\alpha \bar{\beta}} \ul{\theta}^{\alpha} \wedge \overline{\ul{\theta}}{}^{\bar{\beta}}$, where $\ul{h}_{\alpha \bar{\beta}}$ is a Hermitian matrix referred to as the \emph{Levi form} of $\ul{\theta}^0$. The signature of $\ul{h}_{\alpha \bar{\beta}}$ is an invariant of $(\ul{H},\ul{J})$. \emph{We shall henceforth assume that $\ul{h}_{\alpha \bar{\beta}}$ has positive definite signature.}

We shall also use the indices just introduced in an abstract way. Thus, sections of $\ul{H}^{(1,0)}$ and $\ul{H}^{(0,1)}$ will be adorned with minuscule Greek indices, plain and barred respectively, and similarly for their duals, e.g.\ $\ul{V}^{\alpha} \in \Gamma(\ul{H}^{(1,0)})$ and $\ul{\mu}{}_{\bar{\alpha}} \in \Gamma((\ul{H}^{(0,1)})^*)$. In addition, sections of $\Ann(\ul{H})$ and their duals will be adorned with a lower, respectively, upper $0$, e.g.\ $\ul{\alpha}{}_0 \in \Gamma(\Ann(\ul{H}))$ and $\ul{v}{}^0 \in \Gamma(\ul{L})$.  As before, symmetrisation and skew-symmetrisation will be denoted by round brackets and square brackets respectively. Index types can be converted using the Levi form $h_{\alpha \bar{\beta}}$ of a given contact form. Clearly, complex conjugation on ${}^\C \ul{H}$ changes the index type, so we shall write $\ul{v}{}^{\bar{\alpha}}$ for $\overline{\ul{v}{}^{\alpha}}$, and so on. We also note that $\overline{\ul{h}_{\alpha \bar{\beta}}} = \ul{h}_{\alpha \bar{\beta}}$. The tracefree part of a tensor of mixed valence with respect to $\ul{h}_{\alpha \bar{\beta}}$, for instance $\ul{T}_{\alpha \bar{\beta}}$, will be adorned with a ring, e.g.\ $\left( \ul{T}_{\alpha \bar{\beta}} \right)_\circ$ so that $\left( \ul{T}_{\alpha \bar{\beta}} \right)_\circ \ul{h}^{\alpha \bar{\beta}} = 0$.

An \emph{(infinitesimal) symmetry} of $(\ul{\mc{M}}, \ul{H}, \ul{J})$ is a vector field $\ul{v}$ on $\ul{\mc{M}}$ that preserves both $\ul{H}$ and $\ul{J}$, i.e.\ $\left( \mathsterling_{\ul{v}} \ul{\theta}^0 \right) \wedge \ul{\theta}^0 = 0$ for any contact form $\ul{\theta}^0$ of  $\ul{H}$, and $\mathsterling_{\ul{v}} \ul{\alpha} \in \Gamma(\Ann(\ul{H}^{(0,1)}))$ for any $1$-form $\ul{\alpha}$ in $\Ann(\ul{H}^{(0,1)})$. Such an infinitesimal symmetry $\ul{v}$ is said to be \emph{transverse} if it inserts non-trivially into any contact form, i.e.\ $\ul{v}$ is the Reeb vector field of some contact form of $\ul{H}$.

Analogous to conformal density bundles, one can also introduce \emph{CR density bundles} $\ul{\mc{E}}(w,w')$ for any $w, w' \in \C$ such that $w - w' \in \Z$. The details of the definition of these $\C^*$-principal bundles over $\ul{\mc{M}}$ are given in \cite{Gover2005,Cap2008,Cap2010}. We shall simply note here that $\overline{\ul{\mc{E}}(w,w')} = \ul{\mc{E}}(w',w)$. Such density bundles allow us to define analogues of the conformal metric in the CR setting, namely, a canonical section $\ul{\bm{\theta}}{}^0$ of $T^* \ul{\mc{M}} \otimes \ul{\mc{E}}(1,1)$, and a canonical section $\ul{\bm{h}}_{\alpha \bar{\beta}}$ of $(\ul{H}^{(1,0)})^* \otimes (\ul{H}^{(0,1)})^* \otimes \ul{\mc{E}}(1,1)$ with the property that for each $\ul{s} \in \Gamma(\ul{\mc{E}}(-1,-1))$, $\ul{\theta}{}^0 = \ul{s} \ul{\bm{\theta}}{}^0$ is a contact form with Levi form $\ul{h}_{\alpha \bar{\beta}} = \ul{s}  \ul{\bm{h}}_{\alpha \bar{\beta}}$. These sections are weighted analogues of the contact form and its Levi form. The latter and its inverse $\ul{\bm{h}}{}^{\alpha \bar{\beta}}$ identify $\ul{H}^{(1,0)}$ with $(\ul{H}^{(0,1)})^*(1,1)$ and $\ul{H}^{(0,1)}$ with $(\ul{H}^{(1,0)})^*(1,1)$. In effect, indices can also be raised and lowered using $\ul{\bm{h}}_{\alpha \bar{\beta}}$, e.g.\ $\ul{v}{}_{\alpha} = \ul{\bm{h}}{}_{\alpha \bar{\beta}} \ul{v}{}^{\bar{\beta}}$, thereby changing the weights of the tensors.

\subsection{Webster--Tanaka connections}\label{sec:WT_conn}
Let $(\ul{\mc{M}}, \ul{H}, \ul{J})$ be a partially integrable contact almost CR structure of dimension $2m+1$ as before. For definiteness, we shall assume $m>1$. Then for each contact form $\ul{\theta}^0$, there is a unique linear connection $\ul{\nabla}$ on $T \ul{\mc{M}}$ called the \emph{Webster--Tanaka connection}, which preserves $\ul{\theta}^0$, $\dd \ul{\theta}^0$ and the complex structure $\ul{J}$ and has prescribed torsion as follows. If $\{ \ul{\theta}{}^0 , \ul{\theta}{}^\alpha , \overline{\ul{\theta}}{}^{\bar{\alpha}} \}$ is an adapted coframe, the Cartan structure equations read as
\begin{subequations}\label{eq:structure_CR}
	\begin{align}
	\dd \ul{\theta}^{0} & = \ii \ul{h}_{\alpha \bar{\beta}} \ul{\theta}^{\alpha} \wedge \overline{\ul{\theta}}{}^{\bar{\beta}}  \, , \\
	\dd \ul{\theta}^{\alpha} & = \ul{\theta}^{\beta} \wedge \ul{\Gamma}_{\beta}{}^{\alpha} +  \ul{A}^{\alpha}{}_{\bar{\beta}}   \ul{\theta}^{0} \wedge \overline{\ul{\theta}}{}^{\bar{\beta}} - \frac{1}{2} \ul{\Nh}_{\bar{\beta} \bar{\gamma}}{}^{\alpha}\overline{\ul{\theta}}{}^{\bar{\beta}} \wedge \overline{\ul{\theta}}{}^{\bar{\gamma}} \, , \label{eq:structure_CR_A}\\
	\dd \overline{\ul{\theta}}{}^{\bar{\alpha}} & = \overline{\ul{\theta}}{}^{\bar{\beta}} \wedge \ul{\Gamma}_{\bar{\beta}}{}^{\bar{\alpha}} + \ul{A}^{\bar{\alpha}}{}_{\beta}  \ul{\theta}^{0} \wedge \ul{\theta}^{\beta} - \frac{1}{2} \ul{\Nh}_{\beta \gamma}{}^{\bar{\alpha}} \theta^{\beta} \wedge \theta^{\gamma} \, .
	\end{align}
\end{subequations}
where $\ul{\Gamma}_{\beta}{}^{\alpha}$ is the connection $1$-form of $\ul{\nabla}$ for that coframe, $\ul{h}_{\alpha \bar{\beta}}$ the Levi form of $\ul{\theta}^0$, $\ul{A}_{\alpha \beta}$ (and its conjugate $\ul{A}_{\bar{\alpha} \bar{\beta}}$) the \emph{pseudo-Hermitian torsion tensor}, and $\ul{\Nh}_{\alpha \beta \gamma}$ (and its conjugate $\ul{\Nh}_{\bar{\alpha} \bar{\beta} \bar{\gamma}}$) the \emph{Nijenhuis tensor},\footnote{This is a slight abuse of terminology, since strictly, the Nijenhuis tensor is the real tensor defined by both $\ul{\Nh}_{\alpha \beta \gamma}$ and $\ul{\Nh}_{\bar{\alpha} \bar{\beta} \bar{\gamma}}$.}  and these satisfy the symmetries $\ul{A}_{\alpha \beta} = \ul{A}_{(\alpha \beta)}$ and $\ul{\Nh}_{\alpha \beta \gamma} = \ul{\Nh}_{[\alpha \beta] \gamma}$ with $\ul{\Nh}_{[\alpha \beta \gamma]} = 0$.
\begin{rem}
	The Nijenhuis tensor $\ul{\Nh}_{\alpha \beta \gamma}$ is the obstruction to the integrability of $(\ul{H},\ul{J})$ and is a CR invariant. On the other hand, 
	$\ul{A}_{\alpha \beta}$ depends on the choice of contact form $\ul{\theta}^0$. Its vanishing is equivalent to its Reeb vector field $\ul{e}_0$ being an infinitesimal symmetry of $(\ul{H},\ul{J})$.
\end{rem}

The structure equations \eqref{eq:structure_CR} give the commutation relations
\begin{subequations}\label{eq:CR_com}
	\begin{align}
	(\ul{\nabla}_{\alpha} \ul{\nabla}_{\bar{\beta}} - \ul{\nabla}_{\bar{\beta}} \ul{\nabla}_{\alpha} ) \ul{f} & = - \ii \bm{\ul{h}}_{\alpha \bar{\beta}} \ul{\nabla}_{0} \ul{f} \, , \\
	(\ul{\nabla}_{\alpha} \ul{\nabla}_{0} - \ul{\nabla}_{0} \ul{\nabla}_{\alpha} ) \ul{f} & = \ul{A}_{\alpha}{}^{\bar{\beta}} 
	\ul{\nabla}_{\bar{\beta}} \ul{f} \, , \\
	(\ul{\nabla}_{\alpha} \ul{\nabla}_{\beta} - \ul{\nabla}_{\beta} \ul{\nabla}_{\alpha} ) \ul{f} & = \ul{\Nh}_{\alpha \beta}{}^{\bar{\gamma}} \ul{\nabla}_{\bar{\gamma}} \ul{f} \, , & 
	\end{align}
\end{subequations}
for any smooth function $\ul{f}$, and similarly for their complex conjugates.

The curvature tensors of the Webster--Tanaka connection are given by
\begin{align*}
(\ul{\nabla}_{\alpha} \ul{\nabla}_{\bar{\beta}} - \ul{\nabla}_{\bar{\beta}} \ul{\nabla}_{\alpha} ) \ul{V}^{\gamma} + \ii \bm{\ul{h}}_{\alpha \bar{\beta}} \ul{V}^\gamma & =: \ul{R}_{\alpha \bar{\beta} \delta}{}^{\gamma} \ul{V}^{\delta}  \, , \\
(\ul{\nabla}_{\alpha} \ul{\nabla}_{0} - \ul{\nabla}_{0} \ul{\nabla}_{\alpha} ) \ul{V}^{\gamma} - \ul{A}_{\alpha}{}^{\bar{\beta}} \nabla_{\bar{\beta}} \ul{V}^\gamma & =: \ul{R}_{\alpha 0 \delta}{}^{\gamma} \ul{V}^{\delta}  \, , \\
(\ul{\nabla}_{\alpha} \ul{\nabla}_{\beta} - \ul{\nabla}_{\beta} \ul{\nabla}_{\alpha} ) \ul{V}^{\gamma} - \ul{\Nh}_{\alpha \beta}{}^{\bar{\delta}} \nabla_{\bar{\delta}} \ul{V}^\gamma & =: \ul{R}_{\alpha \beta \delta}{}^{\gamma} \ul{V}^{\delta}  \, ,\end{align*}
for any section $\ul{V}^\alpha$ of $\ul{H}^{(1,0)}$, and similarly for their complex conjugates. Analogous formulae can be derived for sections of $\ul{H}^{(0,1)}$ and their duals.

The curvature and torsion tensors are related by the first Bianchi identities:
\begin{subequations}\label{eq:Bianchi1_CR}
	\begin{align}
	2 \ul{R}_{\beta}\,^{[\gamma}\,_{\alpha}\,^{\delta]} & =   - \ul{\Nh}_{\epsilon \beta \alpha} \ul{\Nh}^{\gamma \delta \epsilon} \, , \label{eq:B1_R=N2}\\
	\ul{R}_{\alpha 0 \beta}\,^{\gamma}   & =  \ul{\nabla}^{\gamma}{\ul{A}_{\alpha \beta}}  + \ul{A}^{\delta \gamma} \ul{\Nh}_{\delta \alpha \beta} \, , \label{eq:B1_R=DA+AN} \\
	\ul{R}_{\beta \gamma \alpha}\,^{\delta}  & =  \ul{\nabla}^{\delta}{\ul{\Nh}_{\beta \gamma \alpha}} - 2 \ii \ul{A}_{\alpha [\beta} \delta^{\delta}_{\gamma]} \, , \label{eq:B1_R=DN-A}\\
	\ul{\nabla}_{0}{\ul{\Nh}_{\beta \gamma \alpha}}  & =  - 2 \ul{\nabla}_{[\beta}{\ul{A}_{\gamma] \alpha }}  \, ,  \label{eq:B1_DN=DA} \\
	\ul{\nabla}_{[\delta}{\ul{\Nh}_{\beta \gamma] \alpha}} & = 0 \, ,  \label{eq:B1_DN=0}
	\end{align}
\end{subequations}
together with their complex conjugates, from which we obtain $\ul{R}_{[\beta \delta \gamma]}{}^{\alpha} = 0$ and $2 \ul{R}_{0 [\beta \gamma]}\,^{\alpha}  = \ul{A}^{\alpha \delta} \ul{\Nh}_{\beta \gamma \delta}$. We record the second Bianchi identities in the following general form:
\begin{subequations}\label{eq:Bianchi2_CR}
	\begin{align}
	& 2 \ul{\nabla}_{[\delta} {\ul{R}_{\epsilon] \bar{\gamma}}\,_{\alpha}\,^{\beta}} 
	+ \ul{\nabla}_{\bar{\gamma}}{\ul{R}_{\delta \epsilon \alpha}\,^{\beta}} 
	- \ul{\Nh}_{\delta \epsilon}{}^{\bar{\phi}}  \ul{R}_{\bar{\phi} \bar{\gamma} \alpha}\,^{\beta}  
	+ 2 \ii  \ul{\bm{h}}_{[\delta| \bar{\gamma}} \ul{R}_{|\epsilon] 0 \alpha}\,^{\beta}    = 0  \, ,  \label{eq:B2_DmR}\\
	&
	\ul{\nabla}_{\gamma}{\ul{R}_{\bar{\delta} 0 \alpha}\,^{\beta}} 
	- \ul{\nabla}_{\bar{\delta}}{\ul{R}_{\gamma 0 \alpha}\,^{\beta}} 
	+ \ul{\nabla}_{0}{\ul{R}_{\gamma \bar{\delta} \alpha}\,^{\beta}} - \ul{A}_{\gamma}{}^{\bar{\epsilon}}  \ul{R}_{\bar{\delta} \bar{\epsilon} \alpha}\,^{\beta}  + \ul{A}_{\bar{\delta}}{}^{\epsilon}  \ul{R}_{\gamma \epsilon \alpha}\,^{\beta} = 0 \, , \label{eq:B2_D0R}\\
	&  
	2 \ul{\nabla}_{[\gamma}{\ul{R}_{\delta] 0 \alpha}\,^{\beta}} 
	+  \ul{\nabla}_{0} {\ul{R}_{\gamma \delta}\,_{\alpha}\,^{\beta}}  
	- 2 \ul{A}^{\bar{\epsilon}}{}_{[\gamma}  \ul{R}_{\delta] \bar{\epsilon} \alpha}\,^{\beta}  
	- \ul{\Nh}_{\gamma \delta}{}^{\bar{\epsilon}}  \ul{R}_{\bar{\epsilon} 0 \alpha}\,^{\beta} = 0 \, ,  \\
	&  \ul{\nabla}_{[\gamma} \ul{R}_{\delta \epsilon] \alpha}\,^{\beta}  + \ul{\Nh}_{[\delta \epsilon| \phi}  \ul{R}_{|\gamma]}\,^{\phi}\,_{\alpha}\,^{\beta} = 0\, , 
	\end{align}
\end{subequations}
together with their complex conjugates.

At this stage, we define the \emph{Chern--Moser tensor} $\ul{S}_{\alpha \bar{\gamma} \beta \bar{\delta}}$ of $(\ul{\mc{M}},\ul{H}, \ul{J})$ to be the totally tracefree totally  symmetric part of $\ul{R}_{\alpha \bar{\gamma} \beta \bar{\delta}} = \ul{R}_{\alpha \bar{\gamma}}{}_{\beta}{}^{\epsilon} \ul{\bm{h}}_{\epsilon \bar{\delta}}$, i.e.\
\begin{align*}
\ul{S}_{\alpha}{}^{\gamma}{}_{\beta}{}^{\delta} & := \left( \ul{R}_{(\alpha}{}^{(\gamma}{}_{\beta)}{}^{\delta)} \right)_\circ \, .
\end{align*}
The Chern--Moser tensor is a CR invariant. The vanishing of both $\ul{\Nh}_{\alpha \beta \gamma}$ and $\ul{S}_{\alpha \bar{\gamma} \beta \bar{\delta}}$ is equivalent to the almost CR structure being locally \emph{CR flat}, i.e.\ $\ul{\mc{M}}$ is locally diffeomorphic to the CR sphere.

We shall also need the \emph{Webster--Ricci tensor} $\ul{\Ric}{}_{\gamma}{}^{\delta} := \ul{\bm{h}}{}^{\alpha \bar{\beta}} \ul{R}{}_{\alpha \bar{\beta} \gamma}{}^{\delta}$, the \emph{Webster--Ricci scalar} $\ul{\Sc} := \ul{\Ric}{}_{\gamma}{}^{\gamma}$, the \emph{Webster--Schouten tensor} and the \emph{Webster--Schouten scalar}
\begin{align*}
\ul{\Rho}_{\alpha \bar{\beta}} & := \frac{1}{m+2}\left( \ul{\Ric}_{\alpha \bar{\beta}} - \frac{1}{2m+2} \ul{\Sc} \, \ul{\bm{h}}_{\alpha \bar{\beta}} \right) \, , & \ul{\Rho} :=  \ul{\Rho}_{\alpha \bar{\beta}} \ul{\bm{h}}^{\alpha \bar{\beta}} \, ,
\end{align*}
respectively. Equation \eqref{eq:B1_R=N2} then allows us to decompose $\ul{R}_{\alpha \bar{\gamma} \beta}{}^{\delta}$ as
\begin{align*}
\ul{R}_{\alpha}{}^{\gamma}{}_{\beta}{}^{\delta} 
=  \frac{1}{4} \ul{\Nh}^{\gamma \delta \epsilon} \ul{\Nh}_{\alpha \beta \epsilon} - \frac{1}{2} \ul{\Nh}_{\epsilon (\alpha \beta)} \ul{\Nh}^{\gamma \delta \epsilon} - \frac{1}{2} \ul{\Nh}^{\epsilon (\gamma \delta)} \ul{\Nh}_{\alpha \beta \epsilon} 
+  \ul{S}_{\alpha}{}^{\gamma}{}_{\beta}{}^{\delta} + 4  {\ul{ \NRho}}_{(\alpha}{}^{(\gamma} \delta{}_{\beta)}^{\delta)} \, ,
\end{align*}
where
\begin{align*}
\ul{\NRho}_{\alpha}{}^{\gamma}   & :=  \ul{\Rho}{}_{\alpha}{}^{\gamma}  + \frac{1}{m+2} \left(   - \frac{1}{2} \ul{\Nh}_{\beta \delta \alpha} \ul{\Nh}^{\beta \delta \gamma} + \frac{1}{4} \ul{\Nh}_{\alpha \beta \delta} \ul{\Nh}^{\gamma \beta \delta}  + \frac{1}{8 (m+1)  }  \ul{\Nh}_{\epsilon \delta \beta } \ul{\Nh}^{\epsilon \delta \beta}   \delta{}_{\alpha}^{\gamma} \right) \, .
\end{align*}

\begin{defn}\label{def:aCR-E}
	Let $(\ul{\mc{M}},\ul{H},\ul{J})$ be a partially integrable contact almost CR manifold. We say that $(\ul{\mc{M}},\ul{H},\ul{J})$ is \emph{almost CR--Einstein} if it admits a contact form $\ul{\theta}^{0}$ such that its pseudo-Hermitian torsion tensor $\ul{A}_{\alpha \beta}$, its Webster--Schouten tensor $\ul{\Rho}_{\alpha \bar{\beta}}$ and the Nijenhuis tensor $\ul{\Nh}_{\alpha \beta \gamma}$ satisfy
	\begin{align}\label{eq:CR-Einstein}
	\ul{A}_{\alpha \beta} & = 0 \, , &
	\ul{\nabla}^{\gamma} \ul{\Nh}_{\gamma (\alpha \beta)} & = 0 \, , &
	\left( \ul{\Rho}_{\alpha \bar{\beta}} - \frac{1}{m+2}\ul{\Nh}_{\alpha \delta \gamma} \ul{\Nh}_{\bar{\beta}}{}^{\delta \gamma} \right)_\circ & = 0 \, .
	\end{align}
	We refer to $(\ul{H},\ul{J},\ul{\theta}^0)$ as an \emph{almost CR--Einstein structure}. When $(\ul{H},\ul{J})$ is integrable, i.e.\ $\ul{\Nh}_{\alpha \beta \gamma} =0$, we say that $(\ul{\mc{M}},\ul{H},\ul{J})$ is \emph{CR--Einstein}.
\end{defn}
We shall now give an equivalent formulation of almost CR--Einstein structures.
\begin{prop}\label{prop:CR-Einstein_Ric}
	A partially integrable contact almost CR manifold $(\ul{\mc{M}},\ul{H},\ul{J})$ is almost CR--Einstein if and only if it admits a contact form $\ul{\theta}^{0}$ such that its pseudo-Hermitian torsion tensor $\ul{A}_{\alpha \beta}$, its Webster--Ricci tensor $\ul{\Ric}_{\alpha \bar{\beta}}$ and the Nijenhuis tensor $\ul{\Nh}_{\alpha \beta \gamma}$ satisfy
	\begin{subequations}\label{eq:CR-E2}
		\begin{align}
		\ul{A}_{\alpha \beta} & = 0 \, ,  \label{eq:CR-E-A}\\
		\ul{\nabla}^{\gamma}{\ul{\Nh}_{\gamma (\alpha \beta)}} & = 0 \, ,  \label{eq:CR-E-NabN}\\
		\ul{\Ric}_{\alpha}\,^{\beta} -\ul{\Nh}_{\alpha \delta \gamma} \ul{\Nh}^{\beta \delta \gamma}   & = \ul{\Lambda} \delta_{\alpha}^{\beta} \, , & \mbox{for some constant $\ul{\Lambda}$.}\label{eq:CR-E-R}
		\end{align}
	\end{subequations}
\end{prop}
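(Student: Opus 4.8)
The plan is to reduce everything to a single point: the two characterisations differ only in their third equation, and that equation is, \emph{pointwise}, equivalent to $\ul{\Ric}_{\alpha}{}^{\beta} - \ul{\Nh}_{\alpha \delta \gamma} \ul{\Nh}^{\beta \delta \gamma} = f \, \delta_{\alpha}^{\beta}$ for some \emph{function} $f$ on $\ul{\mc{M}}$ — after which the whole content of the proposition is that, in the presence of the first two equations, $f$ is forced to be constant. The pointwise step is immediate: by the definition of the Webster--Schouten tensor, $\ul{\Rho}_{\alpha \bar{\beta}}$ and $\ul{\Ric}_{\alpha \bar{\beta}}$ have proportional tracefree parts, whence
\begin{align*}
\left( \ul{\Rho}_{\alpha \bar{\beta}} - \tfrac{1}{m+2} \ul{\Nh}_{\alpha \delta \gamma} \ul{\Nh}_{\bar{\beta}}{}^{\delta \gamma} \right)_\circ = \tfrac{1}{m+2} \left( \ul{\Ric}_{\alpha \bar{\beta}} - \ul{\Nh}_{\alpha \delta \gamma} \ul{\Nh}_{\bar{\beta}}{}^{\delta \gamma} \right)_\circ \, ,
\end{align*}
and, both terms on the right being Hermitian, this vanishes precisely when $\ul{\Ric}_{\alpha \bar{\beta}} - \ul{\Nh}_{\alpha \delta \gamma} \ul{\Nh}_{\bar{\beta}}{}^{\delta \gamma}$ equals $\ul{\bm{h}}_{\alpha \bar{\beta}}$ times a (necessarily real) function $f$, i.e.\ $\ul{\Ric}_{\alpha}{}^{\beta} - \ul{\Nh}_{\alpha \delta \gamma} \ul{\Nh}^{\beta \delta \gamma} = f \, \delta_{\alpha}^{\beta}$. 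The converse implication of the proposition then requires nothing further, a constant $\ul{\Lambda}$ simply dropping out of the tracefree part; only the constancy of $f$ needs an argument.

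To prove constancy of $f$, I would apply $\ul{\nabla}^{\bar{\beta}}$ to $\ul{\Ric}_{\alpha \bar{\beta}} - \ul{\Nh}_{\alpha \delta \gamma} \ul{\Nh}_{\bar{\beta}}{}^{\delta \gamma} = f \, \ul{\bm{h}}_{\alpha \bar{\beta}}$. Since the Webster--Tanaka connection preserves $\ul{\bm{h}}_{\alpha \bar{\beta}}$, the right-hand side gives $\ul{\nabla}_{\alpha} f$. For the left-hand side, one first extracts a contracted second Bianchi identity for the Webster--Ricci tensor: tracing \eqref{eq:B2_DmR} with $\ul{\bm{h}}^{\epsilon \bar{\gamma}}$ and using the decomposition of $\ul{R}_{\alpha}{}^{\gamma}{}_{\beta}{}^{\delta}$ recorded above together with \eqref{eq:B1_R=DA+AN} and \eqref{eq:B1_R=DN-A} to rewrite the all-holomorphic curvature and the $\ul{R}_{\alpha 0 \beta}{}^{\gamma}$ contributions, one obtains $\ul{\nabla}^{\bar{\beta}} \ul{\Ric}_{\alpha \bar{\beta}}$ as $\ul{\nabla}_{\alpha}$ of a scalar plus terms built from $\ul{A}_{\alpha \beta}$, from $\ul{\nabla}^{\gamma} \ul{\Nh}_{\gamma \alpha \beta}$ and its conjugate, from $\ul{\nabla}_{0} \ul{\Nh}_{\alpha \beta \gamma}$, and from $\ul{\nabla} \ul{\Nh}$ contracted once with $\ul{\Nh}$. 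Now $\ul{A}_{\alpha \beta} = 0$ kills the $\ul{A}$-terms and, via \eqref{eq:B1_DN=DA}, forces $\ul{\nabla}_{0} \ul{\Nh}_{\alpha \beta \gamma} = 0$; the hypothesis $\ul{\nabla}^{\gamma} \ul{\Nh}_{\gamma (\alpha \beta)} = 0$, combined with the algebraic symmetries $\ul{\Nh}_{\alpha \beta \gamma} = \ul{\Nh}_{[\alpha \beta] \gamma}$ and $\ul{\Nh}_{[\alpha \beta \gamma]} = 0$, disposes of the $\ul{\nabla}^{\gamma} \ul{\Nh}_{\gamma \alpha \beta}$-contributions; and expanding $\ul{\nabla}^{\bar{\beta}}\big( \ul{\Nh}_{\alpha \delta \gamma} \ul{\Nh}_{\bar{\beta}}{}^{\delta \gamma}\big)$ by the Leibniz rule, the factor $\ul{\nabla}^{\bar{\beta}} \ul{\Nh}_{\bar{\beta}}{}^{\delta \gamma}$ is again controlled by (the conjugate of) the divergence hypothesis, while $\ul{\nabla}^{\bar{\beta}} \ul{\Nh}_{\alpha \delta \gamma}$ — a divergence of an all-holomorphic object along an anti-holomorphic direction — is rewritten using the commutation relations \eqref{eq:CR_com} and the cyclic identity \eqref{eq:B1_DN=0}. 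After this bookkeeping the $\ul{\nabla} \ul{\Nh} \cdot \ul{\Nh}$ and $\ul{\Nh}^{2}$-type terms cancel and one is left with $\ul{\nabla}_{\alpha} f = 0$ for all $\alpha$; complex conjugation gives $\ul{\nabla}_{\bar{\alpha}} f = 0$.

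It then remains to control the Reeb derivative: the first commutation relation in \eqref{eq:CR_com} reads $( \ul{\nabla}_{\alpha} \ul{\nabla}_{\bar{\beta}} - \ul{\nabla}_{\bar{\beta}} \ul{\nabla}_{\alpha} ) f = - \ii \, \ul{\bm{h}}_{\alpha \bar{\beta}} \ul{\nabla}_{0} f$, and since $\ul{\nabla}_{\alpha} f = \ul{\nabla}_{\bar{\beta}} f = 0$ the left-hand side vanishes, so non-degeneracy of $\ul{\bm{h}}_{\alpha \bar{\beta}}$ forces $\ul{\nabla}_{0} f = 0$; hence $\dd f = 0$ and $f$ is constant on each connected component of $\ul{\mc{M}}$. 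Writing this constant as $\ul{\Lambda}$ turns $\ul{\Ric}_{\alpha}{}^{\beta} - \ul{\Nh}_{\alpha \delta \gamma} \ul{\Nh}^{\beta \delta \gamma} = f \, \delta_{\alpha}^{\beta}$ into \eqref{eq:CR-E-R}, which together with \eqref{eq:CR-E-A} and \eqref{eq:CR-E-NabN} — literally the first two equations of \eqref{eq:CR-Einstein} — closes the equivalence. The main obstacle is entirely in the middle step: setting up the contracted second Bianchi identity correctly in the partially integrable setting and verifying that every Nijenhuis-quadratic and $\ul{\nabla} \ul{\Nh}$ term it produces is annihilated by the two hypotheses \eqref{eq:CR-E-A}, \eqref{eq:CR-E-NabN} and the first Bianchi identities \eqref{eq:Bianchi1_CR}. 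This is a lengthy but purely mechanical index computation; everything else is formal.
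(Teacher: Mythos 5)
Your proposal is correct and, for the hard part, follows essentially the same route as the paper: both reduce the statement to the constancy of the function $f=\tfrac{1}{m}\bigl(\ul{\Sc}-\ul{\Nh}_{\alpha\beta\gamma}\ul{\Nh}^{\alpha\beta\gamma}\bigr)$ and establish $\ul{\nabla}_{\alpha}f=0$ by contracting the second Bianchi identity \eqref{eq:B2_DmR}, simplifying with \eqref{eq:CR-E-A}, \eqref{eq:CR-E-NabN} and the first Bianchi identities, and commuting derivatives to handle the surviving $\ul{\nabla}\ul{\Nh}\cdot\ul{\Nh}$ terms via \eqref{eq:B1_DN=0}. Where you genuinely diverge is the Reeb direction: the paper derives $\ul{\nabla}_{0}\ul{\Lambda}=0$ from the $0$-direction second Bianchi identity \eqref{eq:B2_D0R} together with $\ul{\nabla}_{0}\ul{\Nh}_{\alpha\beta\gamma}=0$ (a consequence of \eqref{eq:B1_DN=DA} with $\ul{A}=0$), whereas you get it for free from the commutation relation in \eqref{eq:CR_com} once the horizontal derivatives vanish — i.e.\ from non-degeneracy of the Levi form — which is cleaner and avoids one Bianchi identity altogether. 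Two cautions on the sketched middle step, which you assert rather than carry out (the paper does it in full): first, the hypothesis $\ul{\nabla}^{\gamma}\ul{\Nh}_{\gamma(\alpha\beta)}=0$ does not ``dispose of'' the full divergence, since the skew part gives $\ul{\nabla}^{\gamma}\ul{\Nh}_{\gamma\alpha\beta}=-\tfrac{1}{2}\ul{\nabla}^{\gamma}\ul{\Nh}_{\alpha\beta\gamma}$, which survives and must be absorbed — exactly as in the paper — through the identity for $\ul{\nabla}^{\beta}\ul{\nabla}^{\gamma}\ul{\Nh}_{\alpha\beta\gamma}$ obtained by commuting derivatives, followed by \eqref{eq:B1_DN=0}; second, the parenthetical claim that $\ul{\Ric}_{\alpha\bar{\beta}}$ is Hermitian is not needed: the trace-free part vanishing already forces $\ul{\Ric}_{\alpha}{}^{\beta}-\ul{\Nh}_{\alpha\delta\gamma}\ul{\Nh}^{\beta\delta\gamma}=f\,\delta_{\alpha}^{\beta}$ with $f$ the trace over $m$, so nothing in the constancy argument depends on that assertion. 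With these provisos the plan closes the equivalence exactly as the paper does, since the easy direction is immediate in both.
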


\begin{proof}
	That \eqref{eq:CR-E2} implies \eqref{eq:CR-Einstein} is clear. For the converse, let us assume \eqref{eq:CR-Einstein}. Then \eqref{eq:CR-E2} holds except that we do not know whether $\ul{\Lambda}$ is constant. We proceed to demonstrate that this is the case. To this end, we take covariant derivatives of \eqref{eq:CR-E-R} to get
	\begin{align}
	\ul{\nabla}_{0} \ul{\Lambda} \delta_{\alpha}^{\beta} & = \ul{\nabla}_{0}  \ul{\Ric}_{\alpha}\,^{\beta} - \ul{\nabla}_{0} \left(\ul{\Nh}_{\alpha \delta \gamma} \ul{\Nh}^{\beta \delta \gamma} \right)  \, , \label{eq:nabLam0} \\
	\ul{\nabla}_{\alpha} \ul{\Lambda} & = \ul{\nabla}_{\beta}  \ul{\Ric}_{\alpha}\,^{\beta} - \ul{\nabla}_{\beta} \left(\ul{\Nh}_{\alpha \delta \gamma} \ul{\Nh}^{\beta \delta \gamma} \right)  \, .\label{eq:nabLam1}
	\end{align}
	Under our assumptions \eqref{eq:CR-Einstein}, the first Bianchi identities \eqref{eq:B1_R=DA+AN}, \eqref{eq:B1_R=DN-A} and \eqref{eq:B1_DN=DA} reduce respectively to
	\begin{align}
	\ul{R}_{\beta 0 \alpha}\,^{\gamma}   & =  0\, , \nonumber \\
	\ul{R}_{\beta \gamma \alpha}\,^{\delta}  & =  \ul{\nabla}^{\delta}{\ul{\Nh}_{\beta \gamma \alpha}} \nonumber  \, , \\
	\ul{\nabla}_{0}{\ul{\Nh}_{\beta \gamma \alpha}}  & = 0  \, , \label{eq:Nab0N}
	\end{align}
	and the second Bianchi identities \eqref{eq:B2_DmR} and \eqref{eq:B2_D0R} become
	\begin{align}
	& 2 \ul{\nabla}{}_{[\delta} {\ul{R}{}_{\epsilon] \bar{\gamma} \alpha}\,^{\beta}} 
	+ \ul{\nabla}_{\bar{\gamma}} \ul{\nabla}^{\beta}{\ul{\Nh}_{\delta \epsilon \alpha}}
	+ \ul{\Nh}_{\delta \epsilon}{}^{\bar{\phi}}  \ul{\nabla}_{\alpha} \ul{\Nh}_{\bar{\phi} \bar{\gamma}}{}^{\beta}  
	= 0  \, , \label{eq:NabR}  \\
	& \ul{\nabla}_{0}{\ul{R}{}_{\epsilon}\,^{\gamma}\,_{\alpha}\,^{\beta}}  = 0 \, ,  \label{eq:Nab0R} 
	\end{align}
	respectively. Combining \eqref{eq:nabLam0} with \eqref{eq:Nab0N}  and \eqref{eq:Nab0R}  clearly yields $\ul{\nabla}_{0} \ul{\Lambda} = 0$.
	
	Now, we trace \eqref{eq:NabR} over $\alpha$ and $\beta$, and over $\bar{\gamma}$ and $\epsilon$ to find
	\begin{align*}
	\ul{\nabla}_{\beta}\ul{\Ric}_{\alpha}{}^{\beta}  =\ul{\nabla}_{\alpha} {\Sc}  + \ul{\nabla}^{\beta}{\ul{\nabla}^{\gamma}\ul{\Nh}_{\alpha \beta \gamma}} 
	- \ul{\Nh}_{\alpha \beta \gamma} \ul{\nabla}_{\delta}\ul{\Nh}^{\beta \gamma \delta} 
	+ \ul{\nabla}_{\beta}\left( \ul{\Nh}_{\gamma \delta \alpha} \ul{\Nh}^{ \gamma \delta \beta}  \right) 
	-  \ul{\nabla}_{\beta}\left( \ul{\Nh}_{\alpha \gamma \delta} \ul{\Nh}^{\beta \gamma \delta} \right)
	\, .
	\end{align*}
	After plugging this expression into \eqref{eq:nabLam1} and a number of tensorial manipulations, using \eqref{eq:CR-E-NabN} in particular, we arrive at
	\begin{multline*}
	\ul{\nabla}_{\alpha} \ul{\Lambda}  = \frac{1}{1 - m} \left( 
	\ul{\nabla}^{\beta}{\ul{\nabla}^{\gamma}\ul{\Nh}_{\alpha \beta \gamma}} 
	+ \left( \ul{\nabla}_{\alpha} \ul{\Nh}_{\beta \gamma \delta} \right) \ul{\Nh}^{\beta \gamma \delta}  
	+ \left( \ul{\nabla}_{\alpha} \ul{\Nh}^{\beta \gamma \delta} \right)  \ul{\Nh}_{\beta \gamma \delta} \right. \\
	\left. 
	+ \left( \ul{\nabla}_{\delta} \ul{\Nh}_{\beta \gamma \alpha} \right) \ul{\Nh}^{\beta \gamma  \delta}   
	+ \left( \ul{\nabla}_{\delta}  \ul{\Nh}^{\beta \gamma  \delta}  \right)  \ul{\Nh}_{\beta \gamma \alpha}
	-  2 \left( \ul{\nabla}_{\delta} \ul{\Nh}_{\alpha \beta \gamma} \right) \ul{\Nh}^{\delta \beta \gamma}   
	\right) \, .
	\end{multline*}
	To show that the RHS of this equality vanishes, we shall need the identity
	\begin{align*}
	\ul{\nabla}^{\beta}{\ul{\nabla}^{\gamma}\ul{\Nh}_{\alpha \beta \gamma}} 
	& = 
	- \ul{\Nh}^{\beta \gamma \delta} \ul{\nabla}_{\delta} \ul{\Nh}_{\beta \gamma \alpha} 
	- \ul{\Nh}_{ \beta \gamma \delta}  \ul{\nabla}{}_{\alpha} \ul{\Nh}^{\beta \gamma \delta}
	- \ul{\Nh}_{ \beta \gamma \alpha} \ul{\nabla}{}_{\delta} \ul{\Nh}^{\beta \gamma \delta} \, ,
	\end{align*}
	which can be obtained by commuting the covariant derivatives and using our assumptions \eqref{eq:CR-Einstein} again.
	This allows us to simplify our expression to
	\begin{align*}
	\ul{\nabla}_{\alpha} \ul{\Lambda} &  = \frac{1}{1 - m} \left( 
	\left( \ul{\nabla}_{\alpha} \ul{\Nh}_{\beta \gamma \delta} \right) \ul{\Nh}^{\beta \gamma \delta}  
	-  2 \left( \ul{\nabla}_{\delta} \ul{\Nh}_{\alpha \beta \gamma} \right) \ul{\Nh}^{\delta \beta \gamma}   
	\right) \, .
	\end{align*}
	Renaming the indices and using \eqref{eq:B1_DN=0} eventually leads to $\ul{\nabla}_{\alpha} \ul{\Lambda} = 0$, as required.
\end{proof}

\begin{rem}
	We see at once from \eqref{eq:CR-E-A} that an almost CR--Einstein manifold admits a transverse infinitesimal CR symmetry. If the almost CR structure is integrable, Definition \ref{def:aCR-E} corresponds to the one given in \cite{Cap2008} and \cite{Leitner2007}. In the latter reference, they are referred to as \emph{transversally symmetric pseudo-Einstein spaces}.  Condition \eqref{eq:CR-E-R} alone defines the notion of \emph{pseudo-Einstein structures} \cite{Lee1988}. The geometric interpretation of \eqref{eq:CR-E-NabN} will be given in Section \ref{sec:almost_Kahler}.
\end{rem}

\begin{rem}
	In dimension three, one can still define a Webster--Tanaka connection. Since the Nijenhuis tensor $\ul{\Nh}_{\alpha \beta \gamma}$ and Chern--Moser tensor  $\ul{S}_{\alpha \bar{\gamma} \beta \bar{\delta}}$ do not exist here, CR flatness is equivalent to the vanishing of the fourth-order CR invariant
	\begin{align}\label{eq:Q_curv}
	\ul{Q}_{\alpha \beta} & := \ii \ul{\nabla}_0 \ul{A}_{\alpha \beta} - 2 \ii \ul{\nabla}_{\alpha} \ul{T}_{\beta} + 2 \ul{\Rho}_{\alpha}{}^{\gamma} \ul{A}_{\gamma \beta}  \, , 
	& \mbox{where} & &
	\ul{T}_{\alpha} & := \frac{1}{3} \left( \ul{\nabla}_{\alpha} \ul{\Rho} - \ii \ul{\nabla}^{\gamma} \ul{A}_{\gamma \alpha} \right) \, .
	\end{align}
	One can also use equations \eqref{eq:CR-Einstein} to define the notion of a CR--Einstein manifold, but the second Bianchi identities no longer implies \eqref{eq:CR-E-R} in the sense that $\ul{\Lambda}$ is not necessarily constant. One may then make the additional assumption that the Webster--Ricci tensor is proportional to the Levi form by a constant factor. Such a condition is however too strong since it is equivalent to $(\ul{\mc{M}}, \ul{H}, \ul{J})$ being locally flat as can be gleaned from equation \eqref{eq:Q_curv}.
\end{rem}

\subsection{Relation to almost K\"{a}hler geometry}\label{sec:almost_Kahler}
Recall (see e.g.  \cite{Gray1980}) that an \emph{almost K\"{a}hler manifold} is an almost Hermitian manifold $(\ut{\mc{M}}, \ut{h}, \ut{J})$,  where  $\ut{h}$ is a Riemannian metric and $\ut{J}$ an almost complex structure compatible with $\ut{h}$, such that the Hermitian $2$-form $\ut{\omega} := \ut{h} \circ \ut{J}$ is closed, i.e.\ $\dd \ut{\omega} = 0$. When $\ut{J}$ is integrable, $(\ut{\mc{M}}, \ut{h}, \ut{J})$ is said to be \emph{K\"{a}hler}. An almost K\"{a}hler manifold that is not K\"{a}hler will be referred to as \emph{strictly almost K\"{a}hler}. We take the dimension of $\ut{\mc{M}}$ to be $2m$ with $m>1$, since clearly, for $m=1$, almost K\"{a}hler necessarily implies K\"{a}hler. In abstract index notation, we shall use minuscule Roman letters starting from the middle of the alphabet, i.e.\ $i,j,k, \ldots$, for sections of $T \ut{\mc{M}}$, of its dual, and tensor products thereof. Indices will be lowered and raised by means of $\ut{h}{}_{i j}$ and its inverse $\ut{h}{}^{i j}$.

Denoting by $\ut{{\nabla}}$ the Levi-Civita connection associated to $\ut{{h}}$, the Hermitian $2$-form satisfies \cite{Gray1980}
\begin{align*}
\ut{{\nabla}}{}_i \ut{{\omega}}{}_{j k} & = 2  \ut{{J}}{}_{j}{}^{\ell} \ut{{\Nh}}{}_{\ell k i} \, ,
\end{align*}
where $\ut{{\Nh}}{}_{j k}{}^{i}= \ut{{\Nh}}{}_{[j k]}{}^{i}$ is the Nijenhuis tensor of $\ut{J}$ which, for an almost K\"{a}hler manifold, satisfies $\ut{{\Nh}}{}_{[j k i]}=0$ and $\ut{{J}}{}_{(i}{}^{\ell} \ut{{\Nh}}{}_{j) \ell k} = 0$. This is the obstruction to the integrability of $\ut{J}$. Thus, when $\ut{J}$ is integrable, we have $\ut{\nabla}_{i} \ut{{\omega}}{}_{j k}=0$.

The complexified tangent bundle $T \ut{\mc{M}}$ splits as ${}^\C T \ut{\mc{M}} = T^{(1,0)} \ut{\mc{M}} \oplus  T^{(0,1)} \ut{\mc{M}}$, and in line with our previous notation, we shall use minuscule Greek letters for sections of  $T^{(1,0)} \ut{\mc{M}}$, and their barred analogues for sections of  $T^{(0,1)} \ut{\mc{M}}$, e.g.\ $\ut{v}^{\alpha} \in \Gamma( T^{(1,0)} \ut{\mc{M}} )$, and $\ut{w}^{\bar{\alpha}} \in \Gamma( T^{(0,1)} \ut{\mc{M}} )$, and similarly for their duals.  In this notation, the only non-vanishing complex components of $\ut{{\Nh}}{}_{i j k}$ are $\ut{{\Nh}}{}_{\alpha \beta \gamma}$ and $\ut{{\Nh}}{}_{\bar{\alpha} \bar{\beta} \bar{\gamma}}$.

\begin{lem}\label{lem:can_conn_aK}
	An almost K\"{a}hler manifold $(\ut{\mc{M}}, \ut{h}, \ut{J})$ admits a unique linear connection $\mathring{\ut{\nabla}}$ that preserves both $\ut{h}$ and $\ut{J}$ and has torsion given by
	\begin{align*}
	2 \mathring{ \ut{\nabla}}{}_{[i} \mathring{ \ut{\nabla}}{}_{j]} \ut{f} & = \ut{\Nh}{}_{i j}{}^{k} \mathring{ \ut{\nabla}}{}_{k} \ut{f} \, , & \mbox{for any smooth function $\ut{f}$ on $\ut{\mc{M}}$.}
	\end{align*}
	Its relation to the Levi-Civita connection $\ut{\nabla}$ is given by
	\begin{align}\label{eq:LC-can_conn}
	\mathring{ \ut{\nabla}}{}_{i} \ut{\alpha}{}_{j} & = \ut{\nabla}{}_{i} \ut{\alpha}{}_{j} - \ut{\alpha}{}^k \ut{\Nh}{}_{k i j} \, , & \mbox{for any $1$-form $\ut{\alpha}_{j}$.}
	\end{align}
\end{lem}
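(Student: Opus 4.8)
The plan is to prove existence by taking \eqref{eq:LC-can_conn} as the \emph{definition} of $\mathring{\ut{\nabla}}$ and verifying its three defining properties, and to obtain uniqueness from the rigidity of metric connections with prescribed torsion. Since $\ut{\Nh}{}_{kij}$ is a tensor field, the right-hand side of \eqref{eq:LC-can_conn} differs from $\ut{\nabla}\ut{\alpha}$ by a pointwise-linear term in $\ut{\alpha}$, so \eqref{eq:LC-can_conn} indeed defines a linear connection on $T^{*}\ut{\mc{M}}$, extended to $T\ut{\mc{M}}$ and to all tensor bundles by the Leibniz rule. It remains to check three identities, each tensorial and hence verifiable in the adapted complex (co)frame introduced above, in which the only non-vanishing complex components of $\ut{\Nh}{}_{ijk}$ are $\ut{\Nh}{}_{\alpha\beta\gamma}$ and $\ut{\Nh}{}_{\bar{\alpha}\bar{\beta}\bar{\gamma}}$:
\begin{enumerate}
	\item[(i)] $\mathring{\ut{\nabla}}\,\ut{h}=0$. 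Expanding by the Leibniz rule and using $\ut{\nabla}\ut{h}=0$ reduces this to the assertion that the contorsion term built from $\ut{\Nh}$ is skew in the two $\ut{h}$-slots, which follows from the symmetries $\ut{\Nh}{}_{ij}{}^{k}=\ut{\Nh}{}_{[ij]}{}^{k}$, $\ut{\Nh}{}_{[ijk]}=0$ and $\ut{J}{}_{(i}{}^{\ell}\ut{\Nh}{}_{j)\ell k}=0$.
	\item[(ii)] $\mathring{\ut{\nabla}}\,\ut{J}=0$. As $\mathring{\ut{\nabla}}\ut{h}=0$ and $\ut{\omega}=\ut{h}\circ\ut{J}$, this is equivalent to $\mathring{\ut{\nabla}}\,\ut{\omega}=0$; substituting \eqref{eq:LC-can_conn} into $\mathring{\ut{\nabla}}{}_{i}\ut{\omega}{}_{jk}$ and inserting the almost K\"{a}hler identity $\ut{\nabla}{}_{i}\ut{\omega}{}_{jk}=2\,\ut{J}{}_{j}{}^{\ell}\ut{\Nh}{}_{\ell k i}$ displayed above, the two contributions cancel after relabelling indices and applying the symmetries of $\ut{\Nh}$. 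This is the only step that genuinely uses $\dd\ut{\omega}=0$.
	\item[(iii)] Torsion. From \eqref{eq:LC-can_conn} and the fact that $\ut{\nabla}$ is torsion-free, $2\,\mathring{\ut{\nabla}}{}_{[i}\mathring{\ut{\nabla}}{}_{j]}\ut{f}$ equals the antisymmetrisation in $i,j$ of the $\ut{\Nh}$-contorsion contracted with $\dd\ut{f}$; by skew-symmetry of $\ut{\Nh}$ in its first two indices this is exactly $\ut{\Nh}{}_{ij}{}^{k}\mathring{\ut{\nabla}}{}_{k}\ut{f}$.
\end{enumerate}

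For uniqueness, let $\nabla'$ be any linear connection preserving $\ut{h}$ and $\ut{J}$ with the same torsion, and set $D:=\nabla'-\mathring{\ut{\nabla}}$, a $(1,2)$-tensor with fully covariant components $D_{ijk}$. From $\nabla'\ut{h}=\mathring{\ut{\nabla}}\ut{h}=0$ we get $D_{i(jk)}=0$; from the equality of torsions, evaluating both connections on $\dd\ut{f}$ for an arbitrary function $\ut{f}$ and antisymmetrising in the first two indices gives $D_{[ij]k}=0$, i.e.\ $D$ is symmetric in its first pair of indices. A tensor that is symmetric in its first pair of indices and skew in its last pair vanishes identically, so $\nabla'=\mathring{\ut{\nabla}}$. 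Note that $\ut{J}$-compatibility of $\nabla'$ is not used in this step: any two metric connections with the same torsion coincide; the role of $\ut{J}$-compatibility is confined to step (ii), where it is what forces the torsion of $\mathring{\ut{\nabla}}$ to be precisely $\ut{\Nh}$.

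The expected main obstacle is step (ii). Steps (i) and (iii) and the whole uniqueness argument are essentially formal manipulations with the index symmetries of $\ut{\Nh}$, but (ii) requires checking that the $\ut{\Nh}$-valued correction in \eqref{eq:LC-can_conn} is exactly the Hermitian correction $-\tfrac{1}{2}\,\ut{J}\circ(\ut{\nabla}\ut{J})$ rewritten through the Nijenhuis tensor, so that $\mathring{\ut{\nabla}}\,\ut{\omega}$ telescopes to zero; this is the place where being almost K\"{a}hler, rather than merely almost Hermitian, is essential, and where the bookkeeping of the $\ut{J}$-trace relation and $\ut{\Nh}{}_{[ijk]}=0$ must be carried out with care.
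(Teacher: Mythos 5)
Your overall strategy coincides with the paper's: take \eqref{eq:LC-can_conn} as the definition, verify compatibility with $\ut{h}$ and $\ut{J}$ and the torsion identity, and get uniqueness by the Levi-Civita-style difference-tensor argument (your uniqueness step, including the observation that $\ut{J}$-compatibility is not needed there, is correct). The problem is that the two verifications where the content of the lemma actually sits are asserted rather than carried out, and the justifications you give for them are not right. For step (iii): writing $\mathring{\ut{\Gamma}}{}^{k}{}_{ij}=\ut{\Gamma}{}^{k}{}_{ij}+\ut{\Nh}{}^{k}{}_{ij}$ and antisymmetrising gives $2\,\mathring{\ut{\nabla}}{}_{[i}\mathring{\ut{\nabla}}{}_{j]}\ut{f}=-(\ut{\Nh}{}_{kij}+\ut{\Nh}{}_{jki})\,\mathring{\ut{\nabla}}{}^{k}\ut{f}$, and identifying this with $\ut{\Nh}{}_{ijk}\mathring{\ut{\nabla}}{}^{k}\ut{f}$ is exactly the cyclic identity $\ut{\Nh}{}_{[ijk]}=0$; skewness of $\ut{\Nh}$ in its first pair alone does not suffice, so the almost K\"ahler hypothesis is used in (iii) as well, contrary to your claim that $\dd\ut{\omega}=0$ enters only in (ii).

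For step (i) the situation is worse than a misattributed hypothesis: the Leibniz expansion gives $\mathring{\ut{\nabla}}{}_{i}\ut{h}{}_{jk}=-(\ut{\Nh}{}_{kij}+\ut{\Nh}{}_{jik})$, so what you need is skewness of $\ut{\Nh}$ under interchange of the contracted slot and the slot carrying the differentiated form index. With the slot assignment you describe, that pair is \emph{not} the skew pair of $\ut{\Nh}$, and the listed symmetries cannot deliver it: a tensor skew in its first pair, skew in its $(1,3)$ pair and with vanishing total skew part is zero, so for a strictly almost K\"ahler manifold the cancellation you assert simply is not available in that arrangement. The check only closes once one pins down, in the complex frame where only $\ut{\Nh}{}_{\alpha\beta\gamma}$ and its conjugate survive, that the two metric slots of the contorsion are precisely the skew pair of $\ut{\Nh}$, so that metric compatibility is exactly the identity $\ut{\Nh}{}_{(ij)k}=0$ invoked in the paper (and one must then recheck the sign in the torsion identity consistently). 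This index bookkeeping is essentially the whole proof of existence, and your proposal waves it through with reasons that, as stated, would fail.
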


\begin{proof}
	The required properties of $\mathring{ \ut{\nabla}}$ follow from the ansatz \eqref{eq:LC-can_conn} and the fact that $\ut{\Nh}{}_{(i j) k} =0$ and $\ut{{J}}{}_{(j}{}^{\ell} \ut{{\Nh}}{}_{k) \ell i} = 0$. Uniqueness can easily be proved as in the case of the Levi-Civita connection.
\end{proof}

The connections $\ut{\nabla}$ and $\mathring{ \ut{\nabla}}$ clearly coincide if and only if $\ut{J}$ is integrable. Their relation can be expressed by writing the structure equations explicitly in terms of a unitary frame $\{ \ut{\theta}^{\alpha} \}$:
\begin{align}\label{eq:str_aK}
\dd \ut{\theta}^{\alpha} & =    \ut{\theta}^{\beta}  \wedge \mathring{ \ut{\Gamma}}{}_{\beta}{}^{\alpha} - \frac{1}{2} \ut{\Nh}_{\bar{\beta} \bar{\gamma}}{}^{\alpha}\overline{\ut{\theta}}{}^{\bar{\beta}} \wedge \overline{\ut{\theta}}{}^{\bar{\gamma}} = -  \ut{\Gamma}{}^{\alpha}{}_{\beta} \wedge \ut{\theta}{}^{\beta}  -  \ut{\Gamma}{}^{\alpha}{}_{\bar{\beta}} \wedge \overline{\ut{\theta}}{}^{\bar{\beta}}  \, ,
\end{align}
and similarly for its complex conjugate. Here, $\ut{\Gamma}{}^{\alpha}{}_{\beta}$ and $\mathring{ \ut{\Gamma}}{}_{\beta}{}^{\alpha}$ are the respective connection $1$-forms of $\ut{\nabla}$ and $\mathring{\ut{\nabla}}$ with respect to $\{ \ut{\theta}^{\alpha} \}$. A cursory comparison of \eqref{eq:str_aK} with \eqref{eq:structure_CR_A}, with $\ul{A}_{\bar{\alpha} \bar{\beta}}=0$, reveals that the connection $\mathring{ \ut{\nabla}}$ is closely related to the Webster--Tanaka connection, and its properties, such as the Bianchi identities, mirror those of the Webster--Tanaka connection.  This analogy justifies the discrepancy in the choice of staggering of indices between $\ut{\Gamma}{}^{\alpha}{}_{\beta}$ and $\mathring{ \ut{\Gamma}}{}_{\beta}{}^{\alpha}$. For the same concern of convention, the respective curvature tensors of $\ut{{\nabla}}$ and $\mathring{ \ut{\nabla}}$ will be defined by
\begin{align*}
2 \ut{\nabla}{}_{[i} \ut{\nabla}{}_{j]} \ut{\alpha}_k & =:  \ut{R}_{i j k}{}^{\ell} \ut{\alpha}_{\ell} \, , &
2\mathring{ \ut{\nabla}}{}_{[i}\mathring{ \ut{\nabla}}{}_{j]} \ut{\alpha}_k & =:  - \mathring{ \ut{R}}{}_{i j k}{}^{\ell} \ut{\alpha}_{\ell} \, ,
\end{align*}
for any $1$-form $\ut{\alpha}_i$. The complex components of $\ut{R}_{i j k \ell}$ can then be expressed as
\begin{align*}
\ut{{R}}{}_{\alpha \beta \gamma \delta} & = 2\mathring{ \ut{\nabla}}{}_{[\gamma|} \ut{\Nh}{}_{\alpha \beta |\delta]}\, , &
\ut{{R}}{}_{\gamma \delta}{}^{\beta}{}_{\alpha} & =\mathring{ \ut{\nabla}}{}^{\beta} \ut{\Nh}_{\gamma \delta \alpha}\, , &
\ut{{R}}{}_{\gamma}{}^{\delta \beta}{}_{\alpha} & =  \mathring{ \ut{R}}{}_{\gamma}{}^{\delta}{}_{\alpha}{}^{\beta}  
- \ut{\Nh}^{\beta \epsilon \delta}  \ut{\Nh}_{\epsilon \alpha \gamma}  \, ,
\end{align*}
and from the last equation, we deduce $\ut{{R}}^{\gamma \delta}{}_{\alpha \beta} = - \ut{\Nh}{}_{\alpha \beta \epsilon} \ut{\Nh}^{\gamma \delta \epsilon}$. The Ricci tensor of $\ut{\nabla}$ is defined to be $\ut{{\Ric}}_{i j} := \ut{{R}}_{i k j}{}^{k}$ as is conventional.  The complex components of the Ricci tensor are given by
\begin{align}\label{eq:Ric_LC2can}
\ut{{\Ric}}_{\alpha \beta} & = 2   \ut{\nabla}^{\gamma}{\ut{\Nh}_{\gamma (\alpha \beta)}}  \, , &
\ut{{\Ric}}_{\alpha}{}^{\beta} &  =  \mathring{\ut{\Ric}}{}_{\alpha}\,^{\beta} - \ut{\Nh}_{\alpha \delta \gamma} \ut{\Nh}^{\beta \delta \gamma} 
\, ,
\end{align}
where we have defined
\begin{align*}
\mathring{ \ut{\Ric}}{}_{\alpha \bar{\beta}} & := \mathring{ \ut{R}}{}_{\gamma}{}^{\gamma}{}_{\alpha \bar{\beta}}  \, .
\end{align*}
This choice of definition is analogous to the definition of the Webster--Ricci tensor, and will prove judicious in the light of Corollary \ref{cor:aCR2aK} below. The next result follows directly from identities \eqref{eq:Ric_LC2can}.

\begin{prop}\label{prop:aK-E}
	Let $(\ut{\mc{M}}, \ut{h}, \ut{J})$ be an almost K\"{a}hler manifold. Denote by $\ut{\nabla}$ the Levi-Civita connection of $\ut{h}$ and by $\mathring{\ut{\nabla}}$ the compatible linear connection of Lemma \ref{lem:can_conn_aK}.
	\begin{enumerate}
		\item The Ricci tensor of $\ut{\nabla}$ commutes with $\ut{J}$, i.e.\ $\ut{J}_{(i}{}^{k} \ut{\Ric}_{j) k} = 0$, if and only if $\mathring{\ut{\nabla}}^{\gamma}{\ut{\Nh}_{\gamma (\alpha \beta)}}  = 0$.
		\item The metric $\ut{h}$ is Einstein, i.e.\
		\begin{align}\label{eq:aK-Einstein}
		\ut{\Ric}_{i j} & = \ut{\Lambda} \ut{h}_{i j} \, , & \mbox{with constant $\ut{\Lambda}$,}
		\end{align}
		if and only if
		\begin{align}\label{eq:aK-Einstein-comp}
		\mathring{\ut{\nabla}}^{\gamma}{\ut{\Nh}_{\gamma (\alpha \beta)}}  & = 0 \, , &
		\mathring{\ut{\Ric}}_{\alpha \bar{\beta}} - \ut{\Nh}_{\alpha \delta \gamma} \ut{\Nh}{}_{\bar{\beta}}{}^{\delta \gamma} & = \ut{\Lambda} \ut{h}{}_{\alpha \bar{\beta}} \, , & \mbox{with constant $\ut{\Lambda}$.} 
		\end{align}
	\end{enumerate}
\end{prop}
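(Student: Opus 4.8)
The plan is to reduce the statement to the type-decomposed form of the identities \eqref{eq:Ric_LC2can}, so that both equivalences become essentially bookkeeping. First I would record the relevant linear algebra on symmetric $2$-tensors over an almost Hermitian vector space. Since the Levi-Civita connection is torsion-free, $\ut{\Ric}_{ij}$ is symmetric, and since $\ut{h}_{ij}$ is $\ut{J}$-Hermitian it has only the mixed component $\ut{h}_{\alpha\bar\beta}$. Working in an adapted complex frame, one checks that for a symmetric tensor $T_{ij}$ the tensor $\ut{J}_{(i}{}^{k}T_{j)k}$ has vanishing mixed component automatically (by symmetry of $T$) and equals $\pm\ii\, T_{\alpha\beta}$ on the purely holomorphic part; hence $\ut{J}_{(i}{}^{k}\ut{\Ric}_{j)k}=0$ holds if and only if $\ut{\Ric}_{\alpha\beta}=0$. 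The same decomposition shows that the Einstein equation $\ut{\Ric}_{ij}=\ut{\Lambda}\,\ut{h}_{ij}$ is equivalent to the pair of conditions $\ut{\Ric}_{\alpha\beta}=0$ and $\ut{\Ric}_{\alpha\bar\beta}=\ut{\Lambda}\,\ut{h}_{\alpha\bar\beta}$, with reality of $\ut{\Ric}$ furnishing the conjugate relations and the constant $\ut{\Lambda}$ passing unchanged in both directions; one may add, though the hypothesis makes it redundant, that in dimension $2m\ge 4$ the contracted second Bianchi identity forces such a $\ut{\Lambda}$ to be constant.

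Next I would observe that the codifferential-type term appearing in \eqref{eq:Ric_LC2can} may equally be taken with respect to $\mathring{\ut{\nabla}}$ rather than $\ut{\nabla}$. Indeed, the contorsion of $\mathring{\ut{\nabla}}$ relative to $\ut{\nabla}$ is algebraically built from $\ut{\Nh}$ via \eqref{eq:LC-can_conn}, and since the only non-vanishing complex components of $\ut{\Nh}_{ijk}$ are $\ut{\Nh}_{\alpha\beta\gamma}$ and $\ut{\Nh}_{\bar\alpha\bar\beta\bar\gamma}$, while the contraction in $\ut{\nabla}^{\gamma}\ut{\Nh}_{\gamma(\alpha\beta)}$ differentiates along the antiholomorphic directions, every correction term would call for a mixed-type component of $\ut{\Nh}$ and therefore vanishes; as both connections preserve $\ut{h}$, raising indices with $\ut{h}$ commutes with either. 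Hence $\ut{\nabla}^{\gamma}\ut{\Nh}_{\gamma(\alpha\beta)}=\mathring{\ut{\nabla}}^{\gamma}\ut{\Nh}_{\gamma(\alpha\beta)}$, and the first identity of \eqref{eq:Ric_LC2can} reads $\ut{\Ric}_{\alpha\beta}=2\,\mathring{\ut{\nabla}}^{\gamma}\ut{\Nh}_{\gamma(\alpha\beta)}$; combined with the first paragraph this is exactly part (1).

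For part (2), the holomorphic piece of the Einstein equation is disposed of as in part (1), yielding the first relation in \eqref{eq:aK-Einstein-comp}. For the mixed piece, lowering an index with $\ut{h}$ in the second identity of \eqref{eq:Ric_LC2can} gives $\ut{\Ric}_{\alpha\bar\beta}=\mathring{\ut{\Ric}}_{\alpha\bar\beta}-\ut{\Nh}_{\alpha\delta\gamma}\ut{\Nh}_{\bar\beta}{}^{\delta\gamma}$, so $\ut{\Ric}_{\alpha\bar\beta}=\ut{\Lambda}\,\ut{h}_{\alpha\bar\beta}$ is equivalent to the second relation in \eqref{eq:aK-Einstein-comp}; reassembling the two complex relations (and conjugates) into a real equation as in the first paragraph recovers $\ut{\Ric}_{ij}=\ut{\Lambda}\,\ut{h}_{ij}$, with constancy of $\ut{\Lambda}$ preserved throughout.

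I do not expect a genuine obstacle: the mathematical content is just the passage from \eqref{eq:Ric_LC2can} to its type-decomposed form. The only delicate points are the two pieces of linear algebra in the first paragraph — that $\ut{J}_{(i}{}^{k}T_{j)k}=0$ really is equivalent to the vanishing of the $(2,0)+(0,2)$ part of a symmetric $T$, and that the mixed component contributes nothing — together with the small remark that $\ut{\nabla}$ and $\mathring{\ut{\nabla}}$ produce the same codifferential of $\ut{\Nh}$, which is what allows part (1) to be phrased intrinsically in terms of $\mathring{\ut{\nabla}}$.
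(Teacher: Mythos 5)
Your proof is correct and follows the paper's route: the paper simply asserts that the proposition ``follows directly from identities \eqref{eq:Ric_LC2can}'', and your argument is exactly the routine type decomposition that makes this explicit. The two details you supply --- that for symmetric $T$ the condition $\ut{J}_{(i}{}^{k}T_{j)k}=0$ is equivalent to the vanishing of its $(2,0)+(0,2)$ part, and that $\ut{\nabla}$ and $\mathring{\ut{\nabla}}$ give the same contraction $\ut{\nabla}^{\gamma}\ut{\Nh}_{\gamma(\alpha\beta)}=\mathring{\ut{\nabla}}^{\gamma}\ut{\Nh}_{\gamma(\alpha\beta)}$ because every contorsion correction calls on a mixed-type component of $\ut{\Nh}$ --- are precisely what the paper leaves implicit, and both are verified correctly.
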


Let us return to a partially integrable contact almost CR manifold $(\ul{\mc{M}},\ul{H},\ul{J})$. We recall here that we assume that the Levi form has positive definite signature. Suppose that $(\ul{\mc{M}},\ul{H},\ul{J})$ admits an infinitesimal transverse symmetry $\ul{e}_0$, i.e.\ $\ul{e}_0$ is the Reeb vector field of some contact form $\ul{\theta}^0$ for which the pseudo-Hermitian torsion tensor vanishes, i.e.\ $\ul{A}_{\alpha \beta} = 0$. Let us denote by $\ut{\mc{M}}$ the (local) leaf space of the corresponding foliation. Then, since $\ul{J}$ and $\dd \ul{\theta}^0$ are preserved along the flow of $\ul{e}_0$, they descend to $\ut{\mc{M}}$ endowing it with an almost K\"{a}hler structure $(\ut{h}, \ut{J})$ -- under the assumption of integrability, this is already proved in \cite{Leitner2007}. In a nutshell:
\begin{prop}\label{prop:aCR2aK}
	Let $(\ul{\mc{M}},\ul{H},\ul{J})$ be a partially integrable contact almost CR manifold that admits an infinitesimal transverse symmetry $\ul{e}_0$. Then $(\ul{H},\ul{J})$ induces an almost K\"{a}hler structure $(\ut{h},\ut{J})$ on the (local) leaf space $\ut{\mc{M}}$ of the foliation defined by $\ul{e}_0$. Further $\ul{J}$ is integrable if and only if $\ut{J}$ is.
\end{prop}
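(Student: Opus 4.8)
The plan is to descend the given structures along the foliation defined by the transverse symmetry $\ul{e}_0$ and check the defining conditions of an almost K\"{a}hler structure on the quotient. First, since $\ul{e}_0$ is an infinitesimal symmetry, $\mathsterling_{\ul{e}_0} \ul{J} = 0$ and $\mathsterling_{\ul{e}_0} \dd \ul{\theta}^0 = 0$; moreover $\mathsterling_{\ul{e}_0} \ul{\theta}^0 = 0$ because $\ul{\theta}^0(\ul{e}_0) = 1$ and $\dd \ul{\theta}^0(\ul{e}_0, \cdot) = 0$ by the Reeb equations, so $\ul{\theta}^0$ itself is basic. Since $\ul{H} = \ker \ul{\theta}^0$ is preserved and $\ul{e}_0$ is transverse to it, $\ul{H}$ is complementary to the line field spanned by $\ul{e}_0$ and descends to the full tangent bundle $T\ut{\mc{M}}$ of the leaf space; the complex structure $\ul{J}$ on $\ul{H}$ then descends to an almost complex structure $\ut{J}$ on $\ut{\mc{M}}$. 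I would make precise the identification $\ul{H} \cong \varpi^* T\ut{\mc{M}}$ via the projection $\varpi \colon \ul{\mc{M}} \to \ut{\mc{M}}$, under which horizontal lifts of vectors on $\ut{\mc{M}}$ correspond to sections of $\ul{H}$, and note that $\ut{J}$-eigenbundles pull back to $\ul{H}^{(1,0)}$, $\ul{H}^{(0,1)}$.

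Next I would produce the metric. The Levi form $\ul{h}_{\alpha\bar\beta}$ of $\ul{\theta}^0$ is positive definite Hermitian on $\ul{H}$, and it is $\ul{e}_0$-invariant because $\dd \ul{\theta}^0$ is and $\ul{h}$ is essentially $\dd\ul{\theta}^0$ restricted to $\ul{H}$ (via $\dd\ul{\theta}^0 = \ii \ul{h}_{\alpha\bar\beta}\ul{\theta}^\alpha \wedge \overline{\ul{\theta}}{}^{\bar\beta}$ together with the adapted coframe, whose $\ul{H}$-part is basic). Hence the associated real symmetric tensor descends to a Riemannian metric $\ut{h}$ on $\ut{\mc{M}}$ compatible with $\ut{J}$, and the Hermitian $2$-form $\ut{\omega} = \ut{h}\circ\ut{J}$ pulls back to $\varpi^*\ut{\omega} = \dd \ul{\theta}^0$ (up to the conventional constant). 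Since $\dd \ul{\theta}^0$ is closed and $\varpi$ is a submersion, $\varpi^* \dd\ut{\omega} = \dd \dd \ul{\theta}^0 = 0$, so $\dd\ut{\omega} = 0$, i.e. $(\ut{\mc{M}},\ut{h},\ut{J})$ is almost K\"{a}hler. I should double-check here that the condition $\ul{A}_{\alpha\beta}=0$ is exactly what guarantees the Levi form is $\ul{e}_0$-invariant in the sense needed (this is flagged in the Remark after Definition~\ref{def:aCR-E}), so that nothing obstructs the descent of $\ul{h}$.

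Finally, for the integrability statement, I would compare Nijenhuis tensors. The Nijenhuis tensor of $\ut{J}$ on $\ut{\mc{M}}$ is computed from brackets of sections of $T^{(1,0)}\ut{\mc{M}}$, which lift to brackets of sections of $\ul{H}^{(1,0)}$ modulo the vertical direction $\ul{e}_0$; by partial integrability the bracket of two $\ul{H}^{(1,0)}$ sections already lies in ${}^\C\ul{H}$, and its $\ul{H}^{(0,1)}$-component is precisely the CR Nijenhuis tensor $\ul{\Nh}_{\alpha\beta\gamma}$, which is $\ul{e}_0$-invariant and basic (using $\ul{\nabla}_0\ul{\Nh}_{\alpha\beta\gamma}=0$, which follows from $\ul{A}_{\alpha\beta}=0$ via \eqref{eq:B1_DN=DA}, or more directly from symmetry-invariance). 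Hence $\ut{\Nh}_{\alpha\beta\gamma} = \varpi_*\ul{\Nh}_{\alpha\beta\gamma}$ up to normalization, so one vanishes iff the other does; equivalently, $\ul{H}^{(1,0)}$ is involutive iff $T^{(1,0)}\ut{\mc{M}}$ is. The main obstacle I anticipate is purely bookkeeping: setting up the horizontal-lift identification carefully enough that "the Levi form descends to a metric" and "the CR Nijenhuis tensor descends to the almost-K\"{a}hler Nijenhuis tensor" are genuine equalities (with the right constants) rather than vague correspondences — in particular making sure the adapted coframe can be chosen basic, which is where the vanishing of $\ul{A}_{\alpha\beta}$ and invariance of $\dd\ul{\theta}^0$ are used.
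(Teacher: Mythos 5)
Your proposal is correct and follows essentially the same route as the paper: since $\ul{e}_0$ preserves $\ul{J}$ and $\dd\ul{\theta}^0$ (the latter automatically, by Cartan's formula, with $\ul{A}_{\alpha\beta}=0$ being precisely what gives $\mathsterling_{\ul{e}_0}\ul{J}=0$), both descend to the local leaf space, yielding the almost K\"ahler structure, and the Nijenhuis tensors correspond under the horizontal-lift identification. The paper treats this as a short descent argument (citing Leitner for the integrable case), and your write-up just makes the same bookkeeping explicit.
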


In fact, the Webster--Tanaka connection $\ul{\nabla}$ descends to the compatible connection $\mathring{ \ut{\nabla}}$ on  $(\ut{\mc{M}}, \ut{h}, \ut{J})$ defined in Lemma \ref{lem:can_conn_aK}. In particular, the restriction of the curvature of $\ul{\nabla}$ to projectable vector fields can be identified with the curvature of $\mathring{ \ut{\nabla}}$. Hence, with reference to Proposition \ref{prop:CR-Einstein_Ric} and Proposition \ref{prop:aK-E} -- see in particular, equations \eqref{eq:aK-Einstein-comp} -- we obtain as a corollary of Proposition \ref{prop:aCR2aK}:

\begin{cor}\label{cor:aCR2aK}
	Let $(\ul{\mc{M}},\ul{H},\ul{J})$ be a partially integrable contact almost CR manifold that admits an infinitesimal transverse symmetry $\ul{e}_0$. Let $\ul{\theta}^0$ be the contact form dual to $\ul{e}_0$ and $\ul{\nabla}$ its corresponding Webster--Tanaka connection. Denote by $(\ut{\mc{M}}, \ut{h}, \ut{J})$ the almost K\"{a}hler (local) leaf space of the foliation defined by $\ul{e}_0$.
	\begin{enumerate}
		\item The Ricci tensor of the Levi-Civita connection $\ut{\nabla}$ of $\ut{h}$ commutes with $\ut{J}$, i.e.\ $\ut{J}_{(i}{}^{k} \ut{\Ric}_{j) k} = 0$, if and only if $\ul{\nabla}^{\gamma}\ul{\Nh}_{\gamma (\alpha \beta)} = 0$.
		\item The metric $\ut{h}$ is Einstein, i.e.\ equation \eqref{eq:aK-Einstein} holds, if and only if  $\ul{\theta}^0$ defines an almost CR--Einstein structure on $(\ul{\mc{M}},\ul{H},\ul{J})$, i.e.\ equations \eqref{eq:CR-E2} hold with $\ul{\Lambda} = \ut{\Lambda}$.
	\end{enumerate}
\end{cor}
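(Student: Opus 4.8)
The plan is to reduce everything to the ``dictionary'' asserted in the paragraph following Proposition \ref{prop:aCR2aK}: that the Webster--Tanaka connection $\ul{\nabla}$ of $\ul{\theta}^0$ descends to the canonical connection $\mathring{\ut{\nabla}}$ of Lemma \ref{lem:can_conn_aK} on the almost K\"{a}hler leaf space $(\ut{\mc{M}}, \ut{h}, \ut{J})$, and hence so do the associated curvature and torsion tensors. First I would make this precise. Since $\ul{e}_0$ is an infinitesimal symmetry, $\ul{J}$, $\ul{\theta}^0$ and $\dd \ul{\theta}^0$ are invariant along its flow, so projectable sections of $\ul{H}^{(1,0)}$ are exactly the lifts of sections of $T^{(1,0)}\ut{\mc{M}}$, and the Levi form $\ul{h}_{\alpha \bar{\beta}}$ of $\ul{\theta}^0$ is the pullback of the Hermitian metric $\ut{h}_{\alpha\bar{\beta}}$. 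With $\ul{A}_{\alpha\beta} = 0$, the structure equation \eqref{eq:structure_CR_A} reads
\[
\dd \ul{\theta}^{\alpha} = \ul{\theta}^{\beta} \wedge \ul{\Gamma}_{\beta}{}^{\alpha} - \tfrac{1}{2} \ul{\Nh}_{\bar{\beta} \bar{\gamma}}{}^{\alpha} \ovl{\ul{\theta}}{}^{\bar{\beta}} \wedge \ovl{\ul{\theta}}{}^{\bar{\gamma}} \, ,
\]
which on the contact distribution is formally identical to the almost K\"{a}hler structure equation \eqref{eq:str_aK}. Since $\ul{\nabla}$ preserves $\ul{\theta}^0$, $\ul{h}_{\alpha\bar{\beta}}$ and $\ul{J}$, once one knows that $\ul{\nabla}$ maps projectable fields to projectable fields, the uniqueness clause in Lemma \ref{lem:can_conn_aK} identifies the induced connection on $\ut{\mc{M}}$ with $\mathring{\ut{\nabla}}$, and $\ul{\Nh}_{\alpha\beta\gamma}$ with the pullback of $\ut{\Nh}_{\alpha\beta\gamma}$. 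Therefore the Webster curvature component $\ul{R}_{\alpha\bar{\beta}\gamma}{}^{\delta}$ evaluated on projectable fields equals $\mathring{\ut{R}}_{\alpha\bar{\beta}\gamma}{}^{\delta}$; in particular $\ul{\Ric}_{\alpha\bar{\beta}}$ corresponds to $\mathring{\ut{\Ric}}_{\alpha\bar{\beta}}$ and $\ul{\nabla}^{\gamma}\ul{\Nh}_{\gamma(\alpha\beta)}$ to $\mathring{\ut{\nabla}}^{\gamma}\ut{\Nh}_{\gamma(\alpha\beta)}$.

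Given this dictionary, both parts follow by invoking the earlier propositions. For (1), Proposition \ref{prop:aK-E}(1) gives $\ut{J}_{(i}{}^{k}\ut{\Ric}_{j)k} = 0$ if and only if $\mathring{\ut{\nabla}}^{\gamma}\ut{\Nh}_{\gamma(\alpha\beta)} = 0$, which under the identification is $\ul{\nabla}^{\gamma}\ul{\Nh}_{\gamma(\alpha\beta)} = 0$. For (2), the hypothesis already supplies $\ul{A}_{\alpha\beta} = 0$, so by Proposition \ref{prop:CR-Einstein_Ric} the contact form $\ul{\theta}^0$ is almost CR--Einstein if and only if $\ul{\nabla}^{\gamma}\ul{\Nh}_{\gamma(\alpha\beta)} = 0$ and $\ul{\Ric}_{\alpha}{}^{\beta} - \ul{\Nh}_{\alpha\delta\gamma}\ul{\Nh}^{\beta\delta\gamma} = \ul{\Lambda}\,\delta_{\alpha}^{\beta}$ for a constant $\ul{\Lambda}$; lowering an index with $\ul{h}_{\alpha\bar{\beta}}$ and translating through the dictionary, these are exactly equations \eqref{eq:aK-Einstein-comp}, which by Proposition \ref{prop:aK-E}(2) hold if and only if $\ut{h}$ is Einstein with constant $\ut{\Lambda} = \ul{\Lambda}$. (In the integrable case $\ul{\Nh} = 0$ and this recovers the known correspondence between CR--Einstein structures and K\"{a}hler--Einstein leaf spaces from \cite{Leitner2007}.)

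The step needing genuine care -- the main obstacle -- is establishing that $\ul{\nabla}$ descends, i.e.\ that $\ul{\nabla}_X Y$ is projectable whenever $X$ and $Y$ are. The horizontal directions are controlled by the structure-equation comparison above; for the transverse direction one uses that $\ul{e}_0$ is a symmetry, so $[\ul{e}_0, Y]$ is projectable, together with the vanishing of the torsion terms that could spoil projectability, namely $\ul{A}_{\alpha\beta} = 0$ and, via the Bianchi identity \eqref{eq:B1_DN=DA}, $\ul{\nabla}_0 \ul{\Nh}_{\beta\gamma\alpha} = 0$. Once projectability is secured, the matching of the preserved tensors and of the prescribed torsion pins the descended connection down as $\mathring{\ut{\nabla}}$ by Lemma \ref{lem:can_conn_aK}, and the remainder of the corollary is bookkeeping with the identifications of the first paragraph.
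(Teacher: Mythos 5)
Your proposal is correct and follows essentially the same route as the paper: identify the Webster--Tanaka connection of $\ul{\theta}^0$ with the canonical connection $\mathring{\ut{\nabla}}$ of Lemma \ref{lem:can_conn_aK} on the leaf space (so curvature, Nijenhuis tensor and the divergence $\ul{\nabla}^{\gamma}\ul{\Nh}_{\gamma(\alpha\beta)}$ descend), then conclude by Propositions \ref{prop:CR-Einstein_Ric} and \ref{prop:aK-E}. The only difference is that you spell out the projectability of $\ul{\nabla}$, which the paper simply asserts; that elaboration is sound.
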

This corollary is also given in \cite{Leitner2007} for CR--Einstein structures and K\"{a}hler--Einstein manifolds.

We shall now provide a converse of Corollary \ref{cor:aCR2aK} using a modification of the construction given in the integrable case in \cite{Leitner2007}. Let $(\ut{\mc{M}},\ut{h},\ut{J})$ be a $2m$-dimensional almost K\"{a}hler manifold, $\ut{\mc{F}}$ its $\mathbf{U}(m)$-frame bundle and $\ul{\mc{M}}$ the total space of the circle bundle associated to the anti-canonical bundle of $(\ut{\mc{M}},\ut{h},\ut{J})$, i.e.\
\begin{align}\label{eq:S1bdl}
\ul{\mc{M}} & := \ut{\mc{F}} \times_{\det} S^1 \accentset{\ul{\varpi}}{\longrightarrow} \ut{\mc{M}} \, ,
\end{align}
where $\det : \mathbf{U}(m) \rightarrow S^1$. The compatible connection $\mathring{ \ut{\nabla}}$ on $\ut{\mc{M}}$ given in Lemma \ref{lem:can_conn_aK} induces  a principal bundle connection  $1$-form $\mathring{\ul{\gamma}}$ on $\ul{\mc{M}}$ with values in $\ii \R$.  We can write $\mathring{ \ul{\gamma}} = \ii \left( \dd t - \ii \ul{\varpi}^* \mathring{ \ut{\Gamma}}_{\gamma}{}^{\gamma} \right)$ where $\mathring{ \ut{\Gamma}}_{\alpha}{}^{\beta}$ is the connection $1$-form on $T \ut{\mc{M}}$ with respect to some unitary frame $\{ \ut{e}_{\alpha} \}$, and $t$ is a coordinate on $S^1$ such that $\ee^{\ii t} \in S^1$. Then, in terms of the dual coframe $\{ \ut{\theta}^{\alpha} \}$,
\begin{align*}
\dd \mathring{ \ul{\gamma}} & =  \ul{\varpi}^* \left(  \left(\mathring{ \ut{\Ric}}{}_{\alpha \bar{\beta}} -  \ut{\Nh}_{\alpha \gamma \delta} \ut{\Nh}_{\bar{\beta}}{}^{\gamma \delta} \right)  \ut{\theta}^{\alpha} \wedge \overline{\ut{\theta}}{}^{\bar{\beta}} 
- \ii \ut{\eta}  \right) \, ,
\end{align*}
where
\begin{align*}
\ut{\eta} := \ii  \left( 2 \, \ut{\Nh}_{\alpha \gamma \delta} \ut{\Nh}_{\bar{\beta}}{}^{\gamma \delta} - \ut{\Nh}_{\gamma \delta \alpha} \ut{\Nh}^{ \gamma \delta}{}_{\bar{\beta}} \right)  \ut{\theta}{}^{\alpha} \wedge \overline{\ut{\theta}}{}^{\bar{\beta}} 
+ \frac{\ii}{2}\mathring{ \ut{\nabla}}{}^{\gamma}\ut{\Nh}_{\alpha \beta \gamma} \ut{\theta}{}^{\alpha} \wedge \ut{\theta}{}^{\beta} -\frac{\ii}{2}\mathring{ \ut{\nabla}}{}^{\bar{\gamma}}\ut{\Nh}_{\bar{\alpha} \bar{\beta} \bar{\gamma}} \overline{\ut{\theta}}{}^{\bar{\alpha}} \wedge \overline{\ut{\theta}}{}^{\bar{\beta}}   \, .
\end{align*}
Suppose now that $\ut{h}$ is Einstein, i.e. \eqref{eq:aK-Einstein} holds. Then
\begin{align*}
\dd \mathring{ \ul{\gamma}} & =  - \ii \ul{\varpi}^*\left( \frac{1}{2} \ut{\Lambda} \ut{\omega} + \ut{\eta} \right) \, ,
\end{align*}
where we recall that $\ut{\omega}$ denotes the Hermitian $2$-form on $(\ut{\mc{M}},\ut{h},\ut{J})$. Since both $\ut{\omega}$ and $\dd \mathring{ \ul{\gamma}}$ are closed, so must $\ut{\eta}$. Hence, locally, we can write $\ut{\omega} = \dd \ut{\alpha}$ and $\ut{\eta} = \dd \ut{\beta}$ for some $1$-forms  $\ut{\alpha}$ and $\ut{\beta}$ on $\ut{{\mc{M}}}$. Define
\begin{subequations}\label{eq:def_contact-E}
	\begin{align}
	\ul{\theta}^0 & := \frac{1}{\ut{\Lambda}} \left( \ii \mathring{ \ul{\gamma}}  -  \ul{\varpi}^*\ut{\beta} \right) \, , & \mbox{if $\ut{\Lambda} \neq 0$,} \label{eq:def_contact-E1}\\
	\ul{\theta}^0 & :=   \ii \mathring{ \ul{\gamma}} -  \ul{\varpi}^*\ut{\beta} + \frac{1}{2}  \ul{\varpi}^*\ut{\alpha} \, , & \mbox{if $\ut{\Lambda}=0$.} \label{eq:def_contact-E2}
	\end{align}
\end{subequations}
In both cases, $\ul{\theta}^0$ is a horizontal $1$-form on $\ul{\mc{M}}$ satisfying $\dd \ul{\theta}^{0} = \frac{1}{2}  \ul{\varpi}^*\ut{\omega}$, i.e.\ $\ul{\theta}^0$ is a contact form. These definitions of $\ul{\theta}^0$ are unique up to the addition of an exact $1$-form $\dd f$, where $f$ is a diffeomorphism on $\ul{\mc{M}}$, such that $\mathsterling_{\ul{v}} f \neq -\ul{\theta}^0 (\ul{v})$ for any smooth vertical vector field $\ul{v}$ on $\ul{\mc{M}}$, i.e.\ $\ul{\varpi}_*\ul{v} = 0$. Further, since the contact distribution $\ul{H}$ is fibrewise isomorphic to $T \ut{\mc{M}}$, it also inherits a bundle complex structure from $\ut{J}$, thereby endowing $\ul{\mc{M}}$ with a partially integrable contact almost CR structure $(\ul{H}, \ul{J})$. By construction, the triple $(\ul{H},\ul{J},\ul{\theta}^0)$ defines an almost CR--Einstein structure on $\ul{\mc{M}}$. In summary:

\begin{prop}\label{prop:aK2aCR}
	Let $(\ut{\mc{M}},\ut{h},\ut{J})$ be a $2m$-dimensional almost K\"{a}hler--Einstein manifold so that equation \eqref{eq:aK-Einstein} holds. Denote by $\ut{\mc{F}}$ its  $\mathbf{U}(m)$-frame bundle. Then the associated circle bundle $\ul{\mc{M}} := \ut{\mc{F}} \times_{\det} S^1 \accentset{\ul{\varpi}}{\longrightarrow}  \ut{\mc{M}}$ inherits an almost CR--Einstein structure $(\ul{H},\ul{J},\ul{\theta}^0)$ from 
	$(\ut{\mc{M}},\ut{h},\ut{J})$ with $\ul{\theta}^0$ given by \eqref{eq:def_contact-E}, i.e.\ the Webster--Tanaka connection satisfies equations \eqref{eq:CR-E2} with $\ul{\Lambda} = \ut{\Lambda}$.
\end{prop}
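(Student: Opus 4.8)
The plan is to assemble the ingredients already established before the statement and reduce to Corollary~\ref{cor:aCR2aK}. The preceding construction provides the circle bundle $\ul{\varpi}\colon\ul{\mc{M}}\to\ut{\mc{M}}$, the $1$-form $\ul{\theta}^0$ of \eqref{eq:def_contact-E} (in both cases $\ut{\Lambda}\neq 0$ and $\ut{\Lambda}=0$), and the identity $\dd\ul{\theta}^0=\tfrac12\ul{\varpi}^*\ut{\omega}$. First I would record what this gives: nondegeneracy of $\ut{\omega}$ makes $\ul{\theta}^0$ a contact form with contact distribution $\ul{H}:=\ker\ul{\theta}^0$; since $\ul{H}$ is fibrewise isomorphic to $T\ut{\mc{M}}$ via $\ul{\varpi}_*$, it inherits the bundle complex structure $\ul{J}$ lifted from $\ut{J}$; partial integrability is immediate from $\dd\ul{\theta}^0=\tfrac12\ul{\varpi}^*\ut{\omega}$ being of type $(1,1)$, so that it vanishes on $\ul{H}^{(1,0)}\times\ul{H}^{(1,0)}$; and the normalisations in \eqref{eq:def_contact-E} are chosen precisely so that the Levi form of $\ul{\theta}^0$ equals $\ul{\varpi}^*\ut{h}_{\alpha\bar\beta}$, which is positive definite. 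Hence $(\ul{\mc{M}},\ul{H},\ul{J})$ is a partially integrable contact almost CR manifold of the type studied in Section~\ref{sec:CR_geom}, and it remains only to verify the three identities \eqref{eq:CR-E2} for the Webster--Tanaka connection of $\ul{\theta}^0$, with $\ul{\Lambda}=\ut{\Lambda}$.

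Next I would identify the $S^1$-action as a transverse infinitesimal symmetry. Writing $\partial_t$ for its generator, one has $\mathring{\ul{\gamma}}(\partial_t)=\ii$ while $\ul{\varpi}^*\ut{\beta}$ and $\ul{\varpi}^*\ut{\alpha}$ annihilate $\partial_t$, so $\ul{\theta}^0(\partial_t)$ is a non-zero constant; and $\iota_{\partial_t}\dd\ul{\theta}^0=\tfrac12\iota_{\partial_t}\ul{\varpi}^*\ut{\omega}=0$ because $\ut{\omega}$ is pulled back from the base. Thus $\partial_t$ is transverse and $\ul{e}_0:=\ul{\theta}^0(\partial_t)^{-1}\partial_t$ is the Reeb vector field of $\ul{\theta}^0$. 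Moreover $\mathsterling_{\partial_t}\ul{\theta}^0=\dd(\iota_{\partial_t}\ul{\theta}^0)+\iota_{\partial_t}\dd\ul{\theta}^0=0$, so the $S^1$-flow preserves $\ul{\theta}^0$, hence $\ul{H}$, and since it covers the identity on $\ut{\mc{M}}$ and $\ul{J}$ was built through $\ul{\varpi}_*$, it preserves $\ul{J}$ as well. Therefore $\ul{e}_0$ is a transverse infinitesimal CR symmetry, and by the Remark after \eqref{eq:structure_CR} the pseudo-Hermitian torsion of $\ul{\theta}^0$ vanishes, which is the first of \eqref{eq:CR-E2}.

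Now I would apply Corollary~\ref{cor:aCR2aK}. The foliation defined by $\ul{e}_0$ is exactly the fibration $\ul{\varpi}$, so its leaf space is $\ut{\mc{M}}$; the descended complex structure is $\ut{J}$ and, by the normalisation recalled above, the descended Levi form is $\ut{h}_{\alpha\bar\beta}$, so the almost K\"{a}hler structure induced on the leaf space by Proposition~\ref{prop:aCR2aK} is precisely $(\ut{h},\ut{J})$. Since $\ut{h}$ is Einstein with constant $\ut{\Lambda}$ by hypothesis \eqref{eq:aK-Einstein}, the second part of Corollary~\ref{cor:aCR2aK} then gives that $\ul{\theta}^0$ defines an almost CR--Einstein structure on $(\ul{\mc{M}},\ul{H},\ul{J})$ with $\ul{\Lambda}=\ut{\Lambda}$, i.e.\ equations \eqref{eq:CR-E2} hold. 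This is the assertion.

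The substantive point, which lives inside Corollary~\ref{cor:aCR2aK} (through Proposition~\ref{prop:aCR2aK} and the discussion after it), is the hard part: that the Webster--Tanaka connection of $\ul{\theta}^0$ is the lift of the compatible connection $\mathring{\ut{\nabla}}$ of Lemma~\ref{lem:can_conn_aK}. The subtlety is that the correction terms $\ul{\varpi}^*\ut{\beta}$ and $\ul{\varpi}^*\ut{\alpha}$ in \eqref{eq:def_contact-E} make $\ul{H}=\ker\ul{\theta}^0$ differ slightly from the horizontal distribution $\ker\mathring{\ul{\gamma}}$, so the horizontal lifts $\ul{e}_\alpha$ entering an adapted coframe are shifted by multiples of $\partial_t$; one then checks, by pulling back the almost-K\"{a}hler structure equations \eqref{eq:str_aK} and comparing with \eqref{eq:structure_CR}, that $\ul{\Gamma}_\beta{}^\alpha:=\ul{\varpi}^*\mathring{\ut{\Gamma}}_\beta{}^\alpha$, together with $\ul{A}_{\alpha\beta}=0$ and $\ul{\Nh}_{\alpha\beta\gamma}:=\ul{\varpi}^*\ut{\Nh}_{\alpha\beta\gamma}$, satisfies all the defining properties of the Webster--Tanaka connection, so by uniqueness it is that connection. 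Granting this, one may also argue directly without citing Corollary~\ref{cor:aCR2aK}: pull the two identities \eqref{eq:aK-Einstein-comp} of Proposition~\ref{prop:aK-E} back along $\ul{\varpi}$ to obtain the remaining two equations of \eqref{eq:CR-E2} with $\ul{\Lambda}=\ut{\Lambda}$.
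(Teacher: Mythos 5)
Your proposal is correct and takes essentially the same route as the paper: the proposition is stated as a summary of the preceding construction, and its implicit ``by construction'' step is exactly the reduction you spell out — the fibre generator is, up to a nonzero constant, the Reeb vector field of $\ul{\theta}^0$, hence a transverse infinitesimal symmetry giving $\ul{A}_{\alpha\beta}=0$, and Corollary \ref{cor:aCR2aK} applied to the leaf space $(\ut{\mc{M}},\ut{h},\ut{J})$ (using that the Webster--Tanaka connection descends to $\mathring{\ut{\nabla}}$) yields \eqref{eq:CR-E2} with $\ul{\Lambda}=\ut{\Lambda}$. Your checks of the contact, partial-integrability and Levi-form normalisation $\dd\ul{\theta}^0=\tfrac12\ul{\varpi}^*\ut{\omega}$ just make explicit what the paper leaves implicit.
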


\begin{rem}
	In the integrable case, a similar construction is given in \cite{Alekseevsky2021} where the resulting CR--Einstein manifold is described as a \emph{Sasaki} manifold over a \emph{quantizable} K\"{a}hler--Einstein manifold. Here, `quantizable' means that the K\"{a}hler manifold admits a principal circle or line bundle together with a connection $1$-form whose exterior derivative is the pullback of the K\"{a}hler form. Such a definition can also be extended to the non-integrable case, and in fact, leaving global considerations aside, any almost K\"{a}hler--Einstein manifold is (locally) quantizable by Proposition \ref{prop:aK2aCR}. An almost CR--Einstein manifold can also be constructed more simply as a trivial line bundle over an almost K\"{a}hler--Einstein manifold.
\end{rem}

The offshoot is that, at least from a local perspective, one can construct any almost CR--Einstein manifold as a circle bundle over an almost K\"{a}hler--Einstein manifold. In the integrable case, the existence of K\"{a}hler--Einstein manifolds is well-established -- see for instance \cite{Besse1987} and references therein. The non-integrable case is somewhat more problematic. The Goldberg conjecture \cite{Goldberg1969} states that any \emph{compact} almost K\"{a}hler--Einstein manifold is necessarily K\"{a}hler, a conjecture proved to be correct when the scalar curvature is non-negative \cite{Sekigawa1987}. However, \emph{non-compact} strictly almost K\"{a}her--Einstein manifolds do exist. In dimension four, Nurowski and Przanowski constructed a Ricci-flat example in \cite{Nurowski1999}. Their method, generalised by Tod, was used to characterise certain families of Ricci-flat strictly almost K\"{a}hler--Einstein manifolds in \cite{Armstrong2002} -- see also \cite{Apostolov2003}. References \cite{Alekseevskiui1970,Apostolov2001} provide examples in any even dimensions, which are not necessarily Ricci-flat. Any of these manifolds can be used to produce \emph{non-integrable} almost CR--Einstein manifolds by applying Proposition \ref{prop:aK2aCR}.

\section{Almost Robinson structures}\label{sec-Robinson}
Further details on the content of this section can be found in \cite{Nurowski2002,Trautman2002,Trautman2002a,Fino2021}. Let $(\mc{M},\mbf{c})$ be a time-oriented and oriented Lorentzian conformal manifold of dimension $2m+2$. An \emph{almost Robinson structure} on $(\mc{M},\mbf{c})$ consists of a pair $(N,K)$ where $N$ is a complex distribution of rank $m+1$, totally null with respect to $\mbf{c}$, and $K$ is a real null line distribution such that ${}^\C K = N \cap \overline{N}$.  When $N$ is involutive or integrable,\footnote{Again, no difference will be made between the two terms in this article.} i.e.\ $[ N , N] \subset N$, we say that $(N,K)$ is a \emph{Robinson structure}. The quadruple $(\mc{M},\mbf{c},N,K)$ is referred to as an \emph{almost Robinson manifold} or \emph{almost Robinson geometry}, and as a \emph{Robinson manifold} or \emph{Robinson geometry} when $N$ is integrable.

The complex distribution $N$ in the definition above is also referred to as an \emph{almost null structure} \cite{Taghavi-Chabert2016}. It is said to have (regular) \emph{real index} one: at every point $p$ of $\mc{M}$, the dimension of the real span of $N_p \cap \overline{N}_p$ is one \cite{Kopczy'nski1992}.

One can equivalently describe an almost Robinson structure as an optical structure whose screen bundle $H_K = K^\perp/K$ is equipped with a bundle complex structure $J$ compatible with the screen bundle conformal structure. Here, we identify the eigenbundles $H_K^{(1,0)}$ and $H_K^{(0,1)}$ of $J$ with the subbundles $\overline{N}/{}^\C K$ and $N/{}^\C K$ of  ${}^\C H_K$.  Under certain conditions, $(N,K)$ induces an almost CR structure $(\ul{H}, \ul{J})$ on the (local) leaf space of $\mc{K}$, and one can show that the involutivity of $(N,K)$ is equivalent to the involutivity of $(\ul{H}, \ul{J})$.

An almost Robinson structure clearly defines an optical structure $K$ and a congruence of null curves $\mc{K}$ associated to it. But in general, not every optical geometry is endowed with a distinguished almost Robinson structure. There is however one exception that is particularly relevant to the present article: let $(\mc{M}, \mbf{c}, K)$ be an optical geometry of dimension $2m+2$ with twisting congruence of oriented null geodesics $\mc{K}$. Let $k$ be an optical vector field, and suppose that its twist satisfies
\begin{align}\label{eq-F^2}
\bm{\tau}_{i k} \bm{\tau}^{k}{}_{j} +\frac{1}{2m} \bm{\tau}_{k \ell} \bm{\tau}^{k \ell} \bm{h}_{i j} & = 0 \, .
\end{align}
Then we can always find an optical vector field whose twist  defines a bundle complex structure $J$ on the screen bundle $H_K$, compatible with the conformal structure there \cite{Fino2020,Fino2021}. In other words, the twist induces an almost Robinson structure on $(\mc{M},\mbf{c})$. We shall refer to an almost Robinson structure arising in this way as a \emph{twist-induced almost Robinson structure}.

As shown in \cite{Fino2020,Fino2021}, a twist-induced almost Robinson structure with non-shearing congruence of null geodesics $\mc{K}$ induces a partially integrable contact almost CR structure $(\ul{H}, \ul{J})$ on the leaf space $\ul{\mc{M}}$ of $\mc{K}$. Further, locally there is a one-to-one correspondence between metrics in $\accentset{n.e.}{\mbf{c}}$ and contact forms in $\Ann(\ul{H})$. More specifically, there is a unique optical vector field $k$ whose twist is normalised to $2m$ for any metric $g$ in $\accentset{n.e.}{\mbf{c}}$. In particular, for every $g$ in $\accentset{n.e.}{\mbf{c}}$, the $1$-form $\kappa = g(k, \cdot)$ satisfies $\mathsterling_k \kappa = 0$, and descends to a contact form $2 \, \ul{\theta}^0$ on $(\ul{\mc{M}}, \ul{H},\ul{J})$ -- the factor of $2$ has been added for later convenience. We can choose a coframe $\{ \ul{\theta}{}^0 , \ul{\theta}{}^{\alpha}, \overline{\ul{\theta}}{}^{\bar{\alpha}} \}$ adapted to $(\ul{H}, \ul{J})$, which we then pull back to $\mc{M}$. Similarly, the Levi form $\ul{h}_{\alpha \bar{\beta}}$ can be pulled back to $\mc{M}$, and can be identified with the screen bundle metric induced from $g$. Denoting by $\varpi$ the local surjective submersion from $\mc{M}$ to $\ul{\mc{M}}$, we can then express the metric $g$ as
\begin{align}\label{eq:g_can}
g & = 2 \, \theta^0 \odot \theta_0 + 2 \, h_{\alpha \bar{\beta}} \theta{}^\alpha \odot \overline{\theta}{}^{\bar{\beta}} \, ,
\end{align}
where 
\begin{align*}
\kappa = \theta^0 & = 2 \, \varpi^* \ul{\theta}^0 \, , & \theta^{\alpha} & = \varpi^* \ul{\theta}^{\alpha} \, , & \overline{\theta}{}^{\bar{\alpha}} & = \varpi^* \overline{\ul{\theta}}{}^{\bar{\alpha}} \, , & h_{\alpha \bar{\beta}} & =  \varpi^*  \ul{h}_{\alpha \bar{\beta}} \, .
\end{align*}
and the $1$-form $\lambda = \theta_{0}$ on $\mc{M}$ is uniquely determined by $g$ and any adapted frame for a given contact form $\ul{\theta}^0$. Its exterior derivative
\begin{multline}\label{eq:dtheta_0}
\dd \theta_{0} =  - B_{\alpha \beta} \theta^{\alpha} \wedge \theta^{\beta} - 2 \,  B_{\alpha \bar{\beta}} \theta^{\alpha} \wedge \overline{\theta}{}^{\bar{\beta}} - B_{\bar{\alpha} \bar{\beta}} \overline{\theta}{}^{\bar{\alpha}} \wedge \overline{\theta}{}^{\bar{\beta}} \\
- C_{\alpha}  \theta^{\alpha} \wedge \theta^{0} - C_{\bar{\alpha}} \overline{\theta}{}^{\bar{\alpha}} \wedge  \theta^{0}    - 2 \, E_{\alpha}  \theta^{\alpha} \wedge \theta_{0} - 2 \, E_{\bar{\alpha}}  \bar{\theta}{}^{\bar{\alpha}} \wedge \theta_{0} - E_0 \theta^0 \wedge \theta_0 \, ,
\end{multline}
defines smooth functions $B_{\alpha \beta}$, $B_{\alpha \bar{\beta}}$, $C_{\alpha}$, $E_{\alpha}$ and $E_0$, and their complex conjugates when relevant. Note that $E_0$ is real and $\overline{B_{\alpha \bar{\beta}}} = - B_{\alpha \bar{\beta}}$. Taken together, equations \eqref{eq:dtheta_0} and \eqref{eq:structure_CR} form the Cartan structure equations for $g$, which in turn determine the connection $1$-form of the Levi-Civita connection $\nabla$ uniquely. Explicitly, and dropping $\varpi^*$ for clarity, we have
\begin{subequations}\label{eq:cov_der_frame}
	\begin{align}
	\nabla \theta^0 & = 2  \ii \,   \ul{h}_{\alpha \bar{\beta}} \theta^{\alpha} \wedge \overline{\theta}{}^{\bar{\beta}} + 2 \, E_{\alpha} \theta^{\alpha}  \odot \theta^0 + 2 \,  E_{\bar{\alpha}} \overline{\theta}{}^{\bar{\alpha}} \odot \theta^0 + E_0 \theta^{0} \otimes \theta^0  \, , \\
	\nabla \theta^\alpha & = \ul{\nabla} \theta^\alpha -   \frac{1}{2} \ul{A}^{\alpha}{}_{\bar{\beta}}  \overline{\theta}{}^{\bar{\beta}}  \otimes \theta^0 - \ul{\Nh}^{\alpha}{}_{\bar{\beta} \bar{\gamma}} \overline{\theta}{}^{\bar{\gamma}}  \otimes \overline{\theta}{}^{\bar{\beta}} - 2 \ii \,  \theta_{0}  \odot \theta^{\alpha} \nonumber  \\
	& \qquad + 2 \, B_{\beta}{}^{\alpha} \theta^{0} \odot \theta^{\beta}   +  2 \, B_{\bar{\beta}}{}^{\alpha } \theta^{0}  \odot \overline{\theta}{}^{\bar{\beta}}   - 2 \,  E^{\alpha} \theta_{0}  \odot \theta^0  -  C^{\alpha} \theta^{0}  \otimes \theta^0 \, , \\
	\nabla \theta_0 & =   \frac{1}{2}  \ul{A}_{\alpha \beta}  \theta^{\alpha} \odot  \theta^{\beta}  + \frac{1}{2}  \ul{A}_{\bar{\alpha} \bar{\beta}}  \overline{\theta}{}^{\bar{\alpha}} \odot \overline{\theta}{}^{\bar{\beta}} 
	  - 2 \, B_{\alpha \bar{\beta}}  \theta^{\alpha} \wedge \overline{\theta}{}^{\bar{\beta}} - B_{\alpha \beta}  \theta^{\alpha} \wedge  \theta^{\beta} - B_{\bar{\alpha} \bar{\beta}} \overline{\theta}{}^{\bar{\alpha}}\wedge \overline{\theta}{}^{\bar{\beta}} \nonumber \\
	& \qquad  - 2 \, E_{\alpha} \theta^{\alpha} \wedge \theta_0 - 2 \, E_{\bar{\alpha}} \overline{\theta}{}^{\bar{\alpha}} \wedge \theta_0 - E_0 \theta^{0} \otimes \theta_0 +  C_{\alpha} \theta^{0} \otimes \theta^{\alpha} +  C_{\bar{\alpha}} \theta^{0} \otimes \overline{\theta}{}^{\bar{\alpha}} \, .
	\end{align}
\end{subequations}
The curvature tensors of $\nabla$ are computed in Appendix \ref{app:computations} as will be needed in the next sections.

Now let $\{ e_0 , e_{\alpha}, \overline{e}_{\bar{\alpha}} , e^0 \}$ be the frame on $\mc{M}$ dual to $\{ \theta^0, \theta^{\alpha} , \overline{\theta}{}^{\bar{\alpha}}, \theta_0 \}$. In relation to our previous notation, $e^0 = k$ and $e_0 = \ell$. Choose a local affine parameter $\phi$ for the geodesics of $\mc{K}$ so that
\begin{align*}
k & = \parderv{}{\phi} \, , & \lambda = \theta_{0} & = \dd \phi + \lambda_{\alpha} \ul{\theta}^{\alpha} + \lambda_{\bar{\alpha}} \overline{\ul{\theta}}{}^{\bar{\alpha}} + \lambda_0 \ul{\theta}^{0} \, ,
\end{align*}
for some smooth functions $\lambda_0$, $\lambda_{\alpha}$ and $\lambda_{\bar{\alpha}}$ on $\mc{M}$. The second of these equations follows from the fact that $\lambda(k) = 1$. Throughout this article, we shall write $\dot{f} := \mathsterling_k f$, $\ddot{f}  := \mathsterling_k \mathsterling_k f$ and so on, for any smooth function $f$. This notation will be extended to tensor fields annihilated by $k$. Now, taking the exterior derivative of $\lambda$, we deduce that
\begin{align*}
B_{\alpha \beta} & = - \ul{\nabla}_{[\alpha} \lambda_{\beta]} + \lambda_{[\alpha} \dot{\lambda}_{\beta]} + \frac{1}{2} \ul{\Nh}_{\alpha \beta \gamma} \lambda^{\gamma} \, , \\
B_{\alpha \bar{\beta}} & = - \frac{1}{2} \left( \ul{\nabla}_{\alpha} \lambda_{\bar{\beta}} - \ul{\nabla}_{\bar{\beta}} \lambda_{\alpha} -\lambda_{\alpha} \dot{\lambda}_{\bar{\beta}} + \lambda_{\bar{\beta}} \dot{\lambda}_{\alpha} + \ii \lambda_0 \ul{h}_{\alpha \bar{\beta}} \right)  \, , \\
C_{\alpha} & = -\frac{1}{2} \left( \ul{\nabla}_{\alpha} \lambda_0 - \ul{\nabla}_{0} \lambda_{\alpha} - \lambda_{\alpha} \dot{\lambda}_0  + \lambda_{0} \dot{\lambda}_{\alpha} -  \ul{A}_{\alpha \beta} \lambda^{\beta} \right) \, , \\
E_{\alpha} & = \frac{1}{2} \dot{\lambda}_{\alpha} \, , \\
E_0 & = \frac{1}{2} \dot{\lambda}_{0} \, .
\end{align*}
We also note that locally, with this choice of frame, sections of the eigenbundles $\ul{H}^{(1,0)}$ and $\ul{H}^{(0,1)}$ of $\ul{J}$ can be pulled back to sections of the bundles $\overline{N}/{}^\C K$ and $N/{}^\C K$ respectively. In particular,
\begin{align*}
e_0 & = \frac{1}{2}\left( \ul{e}_0 - \lambda_0 e^0 \right) \, , & e_{\alpha} & = \ul{e}_{\alpha} - \lambda_{\alpha} e^0 \, , & e_{\bar{\alpha}} & =\ul{e}_{\bar{\alpha}} - \lambda_{\bar{\alpha}} e^0 \, .
\end{align*}
With our conventions, sections of $\overline{N}/{}^\C K$ and $N/{}^\C K$ will be adorned with minuscule Greek indices, plain and barred, i.e.\ $v^\alpha$ and $w^{\bar{\alpha}}$ respectively, and similarly for their duals.

\section{Optical geometries, almost Robinson structures and almost CR structures}\label{sec-curv-cond}
We are now in a position to return to our study of non-shearing congruences of null geodesics. We assume $n=2m$ with $m>1$. For clarity, we project the integrability condition \eqref{eq-int-cond-non-shearing} onto the screen bundle:
\begin{align}\label{eq-int-cond-non-shearing-pr}
W^{0}{}_{i j}{}^{0} = \bm{\tau}_{i k} \bm{\tau}^{k}{}_{j} + \frac{1}{2m} \bm{\tau}_{k l} \bm{\tau}^{kl} \bm{h}_{i j} \, .
\end{align}
With reference to the algebraic constraint \eqref{eq-F^2} on the twist,  we conclude that the LHS of \eqref{eq-int-cond-non-shearing-pr} vanishes, i.e.\ $W^{0}{}_{i}{}^{0}{}_{j} =0$, if and only if the twist of $\mc{K}$ induces an almost Robinson structure. Therefore, following the discussion of Section \ref{sec-Robinson} on the relation between almost CR structures and almost Robinson structures, we immediately arrive at the first main result, Theorem \ref{thm-int-cond2m}, where we note that the curvature condition \eqref{eq-weakestC} is equivalent to
\begin{align*}
\bm{\kappa}_{[a} W_{b]ef[c}  \bm{\kappa}_{d]} k^e k^f & = 0 \, , & \mbox{for any optical vector field $k$, with $\bm{\kappa} = \bm{g}(k, \cdot)$.}
\end{align*}
If any of the equivalent conditions of Theorem \ref{thm-int-cond2m} is satisfied, we can then choose a metric $g$ in $\accentset{n.e.}{\mbf{c}}$, cast it in the form \eqref{eq:g_can}, and use the computation of the curvature tensors for $g$ given in Appendix \ref{app:computations}. This will be assumed throughout the remaining of the paper. Using the index notation introduced earlier accordingly, we shall give further results that relate the degeneracy of the Weyl tensor to the invariant properties of the almost CR structure on the leaf space. Note that none of the statements will depend on the choice of frame for $(N,K)$ as can be checked using the results of \cite{Taghavi-Chabert2014}.

\begin{prop}\label{prop:tw-in-Rob-NS-deg_W}
	Let $(\mc{M},\mbf{c},N,K)$ be a $(2m+2)$-dimensional twist-induced almost Robinson geometry, where $m>1$, with non-shearing congruence of null geodesics $\mc{K}$. Then the Weyl tensor satisfies
	\begin{subequations}\label{eq:NSCNG-AR0}
		\begin{align}
		W^{0}{}_{\alpha}{}^{0}{}_{\beta} & = 0  \, , \label{eq:NSCNG-AR}\\ 
		W^{0}{}_{\alpha}{}^{0}{}_{\bar{\beta}} & =0 \, , \label{eq:NSCNG-AR2} \\
		\left( W^{0}{}_{\alpha \beta \bar{\gamma}} \right)_\circ & = 0 \, . \label{eq:NSCNG-AR-x}
		\end{align}
	\end{subequations}
\end{prop}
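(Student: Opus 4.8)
The plan is to deduce \eqref{eq:NSCNG-AR0} from the integrability condition \eqref{eq-int-cond-non-shearing} of Theorem \ref{thm-int-cond>4}, specialised to $n = 2m$, together with the defining algebraic constraint \eqref{eq-F^2} that characterises a twist-induced almost Robinson structure. Equation \eqref{eq:NSCNG-AR} is, up to the projection onto the screen bundle recorded in \eqref{eq-int-cond-non-shearing-pr}, exactly the statement that $W^0{}_i{}^0{}_j = 0$: indeed \eqref{eq-int-cond-non-shearing-pr} reads $W^0{}_{ij}{}^0 = \bm{\tau}_{ik}\bm{\tau}^k{}_j + \tfrac{1}{2m}\bm{\tau}_{kl}\bm{\tau}^{kl}\bm{h}_{ij}$, and the right-hand side vanishes identically precisely because of \eqref{eq-F^2}. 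Decomposing the screen-bundle index $i$ into holomorphic and antiholomorphic parts $\alpha, \bar\alpha$ with respect to the complex structure $\ul{J}$ then splits $W^0{}_i{}^0{}_j = 0$ into the three components \eqref{eq:NSCNG-AR}, \eqref{eq:NSCNG-AR2}, and the trace part of \eqref{eq:NSCNG-AR-x}; so the first two equations are immediate. The subtlety is that \eqref{eq-int-cond-non-shearing} only controls the symmetric combination $W^0{}_{(ij)}{}^0$, so the antisymmetric part $W^0{}_{[ij]}{}^0$ — equivalently the imaginary part of $W^0{}_{\alpha\bar\beta}{}^0$ — and the tracefree holomorphic-holomorphic part $\left(W^0{}_{\alpha\beta\bar\gamma}\right)_\circ$ of the Weyl tensor are not yet pinned down and will require a separate argument.

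For the remaining pieces the natural route is to exploit the first Bianchi identity for the Weyl tensor, $W_{[abc]d} = 0$, together with its tracelessness, contracted with the optical vector $k^a$ and the appropriate frame vectors. Concretely, from $W_{a[bcd]}k^a = 0$ one extracts a relation among $W^0{}_{\alpha\beta\bar\gamma}$, $W^0{}_{\alpha\bar\gamma\beta}$ and $W^0{}_{\bar\gamma\alpha\beta}$; combining this with the already-established vanishing $W^0{}_\alpha{}^0{}_\beta = W^0{}_\alpha{}^0{}_{\bar\beta} = 0$ and the symmetries of the Weyl tensor under pair exchange, one should be able to show that the only surviving obstruction is the trace $\bm{h}^{\alpha\bar\gamma}W^0{}_{\alpha\beta\bar\gamma}$, which by definition is subtracted off in $\left(W^0{}_{\alpha\beta\bar\gamma}\right)_\circ$. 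Here one uses crucially that the congruence is geodesic and non-shearing: the structure equations \eqref{eq:cov_der_frame} (with $\bm\sigma = 0$ and the twist normalised, as in the presentation of a twist-induced almost Robinson geometry in Section \ref{sec-Robinson}) give the explicit form of $\nabla$, so any Weyl component with a repeated $k$-contraction can be traded for lower-order curvature data and twist terms via the Ricci identity. Alternatively — and this may be cleaner — one can read these vanishing statements straight off the explicit curvature tensors of $g$ computed in Appendix \ref{app:computations}, since by the remarks preceding the proposition we are entitled to work with a metric $g \in \accentset{n.e.}{\mbf{c}}$ in the canonical form \eqref{eq:g_can}; the components $W^0{}_\alpha{}^0{}_\beta$, $W^0{}_\alpha{}^0{}_{\bar\beta}$ and $\left(W^0{}_{\alpha\beta\bar\gamma}\right)_\circ$ are then simply listed there and seen to vanish identically once \eqref{eq-F^2} is imposed (which in the normalisation of Section \ref{sec-Robinson} is automatic, the twist being $\bm\tau_{\alpha\bar\beta} = 2m\,\ii\,\bm{h}_{\alpha\bar\beta}$ with all other components zero).

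I expect the main obstacle to be the tracefree component \eqref{eq:NSCNG-AR-x}: establishing $W^0{}_\alpha{}^0{}_\beta = 0$ and $W^0{}_\alpha{}^0{}_{\bar\beta} = 0$ is essentially a restatement of \eqref{eq-int-cond-non-shearing-pr} plus \eqref{eq-F^2}, whereas showing that $W^0{}_{\alpha\beta\bar\gamma}$ has no tracefree part requires genuinely using the compatibility of the twist with the complex structure $\ul J$ — that is, the fact that $\bm\tau$ is of type $(1,1)$ and proportional to the Levi form. The cleanest implementation is probably to commute covariant derivatives in the equation $k^b\nabla_b\bm\kappa_a = 0$ one step further than in the proof of Theorem \ref{thm-int-cond>4}, or equivalently to differentiate \eqref{eq-to-prol} once more and project onto the holomorphic screen directions, using $\nabla_\alpha\bm\kappa_\beta \propto \bm\tau_{\alpha\beta} = 0$ (the $(2,0)$-part of the twist vanishes) to kill the would-be tracefree term. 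Either way the computation is routine given \eqref{eq:cov_der_frame} and Appendix \ref{app:computations}, so the write-up can be kept short by citing those.
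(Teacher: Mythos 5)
Your proposal is correct in substance and, when you take the route you yourself flag as cleaner, it coincides with the paper's proof: \eqref{eq:NSCNG-AR} and \eqref{eq:NSCNG-AR2} follow from \eqref{eq-int-cond-non-shearing-pr} together with the algebraic constraint \eqref{eq-F^2} (this is exactly Theorem \ref{thm-int-cond2m}), while \eqref{eq:NSCNG-AR-x} is read off from the curvature computation of Appendix \ref{app:computations}, specifically \eqref{eq:R-E}. One small point first: the ``subtlety'' you raise about the antisymmetric part $W^{0}{}_{[ij]}{}^{0}$ is vacuous. By the pair-exchange symmetry $W_{abcd}=W_{cdab}$, the two-$k$ component $W^{0}{}_{i}{}^{0}{}_{j}$ is automatically symmetric in $i,j$, so \eqref{eq-int-cond-non-shearing-pr} with right-hand side annihilated by \eqref{eq-F^2} already gives \eqref{eq:NSCNG-AR} and \eqref{eq:NSCNG-AR2} in full; no separate argument is needed there.

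The genuine issue is your primary route to \eqref{eq:NSCNG-AR-x}. The algebraic Bianchi identity $W_{[abc]d}k^{a}=0$ only relates the one-$k$ components $W^{0}{}_{\alpha\beta\bar{\gamma}}$, $W^{0}{}_{\bar{\gamma}\alpha\beta}$, and so on, among themselves; it never brings in $W^{0}{}_{i}{}^{0}{}_{j}$, and $\left(W^{0}{}_{\alpha\beta\bar{\gamma}}\right)_\circ$ is an algebraically independent irreducible piece of the Weyl tensor, so no combination of the already-established vanishings with tracelessness and pair exchange can force it to zero. Indeed, the full component $W^{0}{}_{\alpha\beta\bar{\gamma}}$ is generically non-zero in this setting -- its trace parts are proportional to $E_{\alpha}$, cf.\ Section \ref{sec:Weyl_tensor} -- which already shows that a purely algebraic argument cannot do the job. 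The vanishing of the tracefree part genuinely uses the differential content of the twisting non-shearing geodesic condition and the $(1,1)$-type of the twist, and this is precisely what the appendix supplies: \eqref{eq:R-E} gives $R_{\beta\bar{\gamma}}{}^{0}{}_{\alpha}=-2\ii E_{\alpha}\ul{h}_{\beta\bar{\gamma}}-\ii E_{\beta}\ul{h}_{\alpha\bar{\gamma}}$, which is pure trace (not identically zero, as your wording suggests), and the metric--Schouten terms in the decomposition \eqref{eq-Riem_decomp} contribute only further trace terms in these slots, so the tracefree Weyl component \eqref{eq:NSCNG-AR-x} vanishes. Your two fallback suggestions -- citing Appendix \ref{app:computations} directly, or equivalently applying the Ricci identity to $\bm{\kappa}$ in the screen directions using that the twist is proportional to the Levi form -- are sound, and the first is what the paper actually does.
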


\begin{proof}
	Conditions \eqref{eq:NSCNG-AR} and \eqref{eq:NSCNG-AR2} follow from Theorem \ref{thm-int-cond2m} -- see also equations \eqref{eq:R-Nsh} and \eqref{eq:R-Nsh2}. For condition \eqref{eq:NSCNG-AR-x}, we refer to equation \eqref{eq:R-E}.
\end{proof}

In addition, with reference to equations \eqref{eq:R-T}, \eqref{eq:R-DT}, \eqref{eq:R-DbT} and \eqref{eq:R-ST}, and Section \ref{sec:WT_conn}, we find
\begin{subequations}\label{eq:Weyl-CR}
	\begin{align}
	W^0{}_{\alpha \beta \gamma} & = - 2 \ii \ul{\Nh}_{\beta \gamma \alpha} \, , \label{eq:W-N}
	\\
	W_{\alpha \beta \gamma \delta} & = 2 \, \ul{\nabla}_{[\alpha|} \ul{\Nh}_{\gamma \delta |\beta]} \, , \label{eq:W-DN}
	\\
	\left( W_{\bar{\alpha} \beta \gamma \delta} \right)_\circ & =  \left( \ul{\nabla}_{\bar{\alpha}} \ul{\Nh}_{\gamma \delta \beta} \right)_\circ \, , \label{eq:W-DN2}
	\\
	\left( W_{\beta \bar{\delta} \bar{\gamma} \alpha} \right)_\circ & =  \left( \frac{1}{2} \ul{\Nh}{}_{\bar{\gamma} \bar{\delta}}{}^{\epsilon} \ul{\Nh}_{\alpha \beta \epsilon} -  \ul{\Nh}_{\epsilon (\alpha \beta)} \ul{\Nh}{}_{\bar{\gamma} \bar{\delta}}{}^{\epsilon}  - \ul{\Nh}^{\epsilon}{}_{(\bar{\gamma} \bar{\delta})} \ul{\Nh}_{\alpha \beta \epsilon} +  \ul{\Nh}_{\epsilon (\alpha \beta)}  \ul{\Nh}{}^{\epsilon}{}_{(\bar{\gamma} \bar{\delta})} \right)_\circ + \ul{S}_{\alpha \bar{\gamma} \beta \bar{\delta}} \, ,  \label{eq:W-S-N2}
	\end{align}
\end{subequations}
where  we recall $\ul{\nabla}$ is the Webster--Tanaka connection corresponding to the contact form $\ul{\theta}^0$ associated to the metric $g$, $\ul{\Nh}_{\alpha \beta \gamma}$ and $\ul{S}_{\alpha \bar{\beta} \gamma \bar{\delta}}$ are the Nijenhuis tensor and the Chern--Moser tensor of $(\ul{\mc{M}}, \ul{H}, \ul{J})$ respectively. As a direct consequence of equations \eqref{eq:Weyl-CR}, we obtain the following three theorems:
\begin{thm}\label{thm:main_int}
	Let $(\mc{M},\mbf{c},N,K)$ be a $(2m+2)$-dimensional twist-induced almost Robinson geometry, where $m>1$, with non-shearing congruence of null geodesics $\mc{K}$.  Denote by $(\ul{H}, \ul{J})$ the induced partially integrable contact almost CR structure on the (local) leaf space $\ul{\mc{M}}$ of $\mc{K}$. The following statements are equivalent:
	\begin{enumerate}
		\item In addition to conditions \eqref{eq:NSCNG-AR0}, the Weyl tensor satisfies
		\begin{align}\label{eq:W_int1}
		W^{0}{}_{\alpha \beta \gamma} = 0 \, .
		\end{align}
		\item The almost Robinson structure $(N,K)$ is integrable.
		\item The almost CR structure $(\ul{H}, \ul{J})$ is integrable.
	\end{enumerate}
	If any of these conditions holds, the Weyl tensor also satisfies
	\begin{subequations}\label{eq:W_int0} 
		\begin{align}
		W_{\alpha \beta \gamma \delta} & = 0 \, , \label{eq:W_int2} \\
		\left( W_{\bar{\gamma} \delta \alpha \beta} \right)_\circ = \left( W_{\bar{\gamma} \bar{\delta} \alpha \beta} \right)_\circ & = 0 \, . \label{eq:W_int_ex}
		\end{align}
	\end{subequations}
\end{thm}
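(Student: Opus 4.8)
The plan is to funnel all three conditions through the Nijenhuis tensor $\ul{\Nh}_{\alpha\beta\gamma}$ of the induced almost CR structure, exploiting the dictionary \eqref{eq:Weyl-CR} between Weyl components and Webster--Tanaka invariants.

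First, observe that by Proposition \ref{prop:tw-in-Rob-NS-deg_W} the equations \eqref{eq:NSCNG-AR0} already hold under the hypotheses of the theorem, so condition (1) is equivalent to the single requirement $W^0{}_{\alpha\beta\gamma}=0$. By \eqref{eq:W-N} we have $W^0{}_{\alpha\beta\gamma}=-2\ii\,\ul{\Nh}_{\beta\gamma\alpha}$, hence (1) holds precisely when $\ul{\Nh}_{\alpha\beta\gamma}=0$. Since $\ul{\Nh}_{\alpha\beta\gamma}$ is the obstruction to the integrability of $(\ul{H},\ul{J})$ --- visible already from the commutator \eqref{eq:CR_com}, where the $(0,1)$-part of the bracket of two $(1,0)$-vector fields is governed by $\ul{\Nh}_{\alpha\beta}{}^{\bar\gamma}$ --- condition (1) is equivalent to (3). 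The equivalence of (2) and (3) is the general correspondence between almost Robinson and almost CR integrability recalled in Section \ref{sec-Robinson} (see \cite{Fino2020,Fino2021}); alternatively, it can be checked directly from the local description of $N$ as spanned by $e^0=k$ and $e_{\bar\alpha}=\ul{e}_{\bar\alpha}-\lambda_{\bar\alpha}e^0$ together with the structure equations \eqref{eq:structure_CR} and \eqref{eq:dtheta_0}, where the brackets of sections of $N$ close modulo terms proportional to $\ul{\Nh}$.

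Assume now any of the equivalent conditions, i.e.\ $\ul{\Nh}_{\alpha\beta\gamma}=0$. Substituting this into \eqref{eq:W-DN} yields \eqref{eq:W_int2}, $W_{\alpha\beta\gamma\delta}=0$, and into \eqref{eq:W-DN2} yields $(W_{\bar\gamma\delta\alpha\beta})_\circ=0$ after reordering the indices of $W_{\bar\gamma\delta\alpha\beta}$ to the form appearing on the left of \eqref{eq:W-DN2} by means of the Weyl symmetries. For the remaining part of \eqref{eq:W_int_ex}, namely $(W_{\bar\gamma\bar\delta\alpha\beta})_\circ=0$, I would invoke the algebraic Bianchi identity $W_{\bar\gamma[\bar\delta\alpha\beta]}=0$, that is
\[
W_{\bar\gamma\bar\delta\alpha\beta}+W_{\bar\gamma\alpha\beta\bar\delta}+W_{\bar\gamma\beta\bar\delta\alpha}=0\,.
\]
Using the pair symmetries of $W$, the last two terms are brought to the ``mixed--mixed'' type appearing on the left of \eqref{eq:W-S-N2}; with $\ul{\Nh}=0$ that equation expresses their tracefree parts through the Chern--Moser tensor $\ul{S}_{\alpha\bar\gamma\beta\bar\delta}$, which, being totally symmetric in its holomorphic and in its antiholomorphic index pair, contributes equal and opposite amounts to the two terms. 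Hence the $\ul{S}$ contributions cancel in the Bianchi sum, leaving $(W_{\bar\gamma\bar\delta\alpha\beta})_\circ=0$.

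Everything except the very last step is routine: it amounts to substituting $\ul{\Nh}=0$ into the already-established dictionary \eqref{eq:Weyl-CR} (and, for (2) $\Leftrightarrow$ (3), citing the standard almost-Robinson/almost-CR correspondence). The delicate point, and the one I expect to require the most care, is the index bookkeeping in the last step: one must match $W_{\bar\gamma\alpha\beta\bar\delta}$ and $W_{\bar\gamma\beta\bar\delta\alpha}$ precisely against the ordering of \eqref{eq:W-S-N2}, and ensure that the tracefree projections $(\cdot)_\circ$ are taken over the same index pairs throughout, so that the algebraic Bianchi identity may legitimately be applied to the tracefree parts.
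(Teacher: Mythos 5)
Your proposal is correct and follows essentially the same route as the paper, which simply reads the theorem off the dictionary \eqref{eq:Weyl-CR} (with the equivalence (2)$\Leftrightarrow$(3) cited from Section \ref{sec-Robinson}, as you do). The only step the paper leaves implicit is the vanishing of $\left( W_{\bar{\gamma} \bar{\delta} \alpha \beta} \right)_\circ$, and your argument via the algebraic Bianchi identity, with the two Chern--Moser contributions from \eqref{eq:W-S-N2} cancelling by the symmetry of $\ul{S}_{\alpha \bar{\gamma} \beta \bar{\delta}}$, is exactly the intended (and correct) way to obtain it.
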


\begin{rem}
	The combined equations \eqref{eq:NSCNG-AR}, \eqref{eq:W_int1} and \eqref{eq:W_int2} can be rewritten as
	\begin{align*}
	W(u,v,w,z) & = 0 \, , & \mbox{for any sections $u$, $v$, $w$ and $z$ of $N$,}
	\end{align*}
	and constitute the integrability condition of an almost Robinson structure \emph{regardless of whether the associated congruence of null curves is geodesic and non-shearing, or not} -- see \cite{Hughston1988,Taghavi-Chabert2016,Taghavi-Chabert2014}. Thus, from the remaining conditions in \eqref{eq:W_int_ex}, we see that the  degeneracy of the Weyl curvature is stronger when the almost  Robinson structure is induced from a twisting non-shearing geodetic congruence. On the other hand, condition \eqref{eq:W_int1} is sufficient to establish the involutivity of the almost Robinson structure.
\end{rem}

By inspection of \eqref{eq:W-S-N2}, we prove:
\begin{thm}\label{thm:main-pre-flatness}
	Let $(\mc{M},\mbf{c},N,K)$ be a $(2m+2)$-dimensional twist-induced almost Robinson geometry, where $m>1$, with non-shearing congruence of null geodesics $\mc{K}$. Denote by $(\ul{H}, \ul{J})$ the induced partially integrable contact almost CR structure on the (local) leaf space $\ul{\mc{M}}$ of $\mc{K}$. The following statements are equivalent:
	\begin{enumerate}
		\item In addition to conditions \eqref{eq:NSCNG-AR0}, the Weyl tensor satisfies
		\begin{align*}
		\left( W_{(\alpha}{}^{(\beta}{}_{\gamma)}{}^{\delta)} \right)_\circ = 0 \, .
		\end{align*}
		\item The Chern--Moser tensor and the Nijenhuis tensor of $(\ul{H}, \ul{J})$ satisfy
		\begin{align}\label{eq:S+NN}
		\ul{S}_{\alpha}{}^{\gamma}{}_{\beta}{}^{\delta} + \left( \ul{\Nh}_{ \epsilon(\alpha  \beta)} \ul{\Nh}^{ \epsilon(\gamma \delta)} \right)_\circ & = 0 \, .
		\end{align}
	\end{enumerate}
\end{thm}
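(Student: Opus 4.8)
The plan is to obtain the required equivalence as a direct consequence of the curvature identity \eqref{eq:W-S-N2}, by extracting its totally symmetric, totally tracefree part. First, observe that under the hypotheses of the theorem conditions \eqref{eq:NSCNG-AR0} hold automatically by Proposition \ref{prop:tw-in-Rob-NS-deg_W}, so statement (1) reduces to the single curvature prescription $\left( W_{(\alpha}{}^{(\beta}{}_{\gamma)}{}^{\delta)} \right)_\circ = 0$. Using the pair-interchange symmetry $W_{abcd} = W_{cdab}$ of the Weyl tensor, this object is precisely the totally symmetric and totally tracefree part of the fully mixed component $W_{\alpha \bar{\gamma} \beta \bar{\delta}}$ -- the ``Weyl counterpart'' of the Chern--Moser tensor $\ul{S}_{\alpha \bar{\gamma} \beta \bar{\delta}}$. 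Hence the entire content of the theorem is a formula for this quantity in terms of the Nijenhuis and Chern--Moser tensors of $(\ul{H}, \ul{J})$.

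First I would take equation \eqref{eq:W-S-N2}, whose left-hand side is the $\ul{\bm{h}}$-tracefree part of the component $W_{\beta \bar{\delta} \bar{\gamma} \alpha}$, and apply to both sides the operation ``totally symmetrize the four holomorphic/anti-holomorphic free indices, then remove all $\ul{\bm{h}}$-traces''. On the right-hand side the term $\ul{S}_{\alpha \bar{\gamma} \beta \bar{\delta}}$ is by definition already totally symmetric and tracefree, and is therefore left untouched, contributing $\ul{S}_{\alpha}{}^{\gamma}{}_{\beta}{}^{\delta}$. Of the four quadratic terms in $\ul{\Nh}$, three are killed by the symmetrization: invoking $\ul{\Nh}_{\alpha \beta \gamma} = \ul{\Nh}_{[\alpha \beta] \gamma}$, each of $\frac{1}{2} \ul{\Nh}_{\bar{\gamma} \bar{\delta}}{}^{\epsilon} \ul{\Nh}_{\alpha \beta \epsilon}$, $-\ul{\Nh}_{\epsilon (\alpha \beta)} \ul{\Nh}_{\bar{\gamma} \bar{\delta}}{}^{\epsilon}$ and $-\ul{\Nh}^{\epsilon}{}_{(\bar{\gamma} \bar{\delta})} \ul{\Nh}_{\alpha \beta \epsilon}$ carries a factor that is skew in two of the four free indices (either in $(\alpha, \beta)$ or in $(\bar{\gamma}, \bar{\delta})$), and a fully symmetrized product containing such a factor vanishes. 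The surviving term is $\ul{\Nh}_{\epsilon (\alpha \beta)} \ul{\Nh}^{\epsilon}{}_{(\bar{\gamma} \bar{\delta})}$; using the cyclic identity $\ul{\Nh}_{[\alpha \beta \gamma]} = 0$ one checks that this product already has the symmetry type of the Chern--Moser tensor, so that its symmetric tracefree part is $\left( \ul{\Nh}_{\epsilon (\alpha \beta)} \ul{\Nh}^{\epsilon (\gamma \delta)} \right)_\circ$. Assembling the pieces yields
\[
\left( W_{(\alpha}{}^{(\beta}{}_{\gamma)}{}^{\delta)} \right)_\circ = \pm \left( \ul{S}_{\alpha}{}^{\gamma}{}_{\beta}{}^{\delta} + \left( \ul{\Nh}_{\epsilon (\alpha \beta)} \ul{\Nh}^{\epsilon (\gamma \delta)} \right)_\circ \right) ,
\]
the overall sign depending only on the reordering of indices, and the equivalence $(1) \Leftrightarrow (2)$ follows at once, since the two sides of \eqref{eq:S+NN} vanish simultaneously.

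I expect the main obstacle to be purely one of bookkeeping: one must check carefully that symmetrizing the free indices of \eqref{eq:W-S-N2} -- after raising the two anti-holomorphic indices with the Levi form -- reproduces exactly $\left( W_{(\alpha}{}^{(\beta}{}_{\gamma)}{}^{\delta)} \right)_\circ$ and not some other combination of Weyl components. This is where the symmetries of $W$ (pair interchange, skewness in each pair, the first Bianchi identity and tracelessness), together with the vanishing of the other mixed components $W^{0}{}_{\alpha}{}^{0}{}_{\beta}$, $W^{0}{}_{\alpha}{}^{0}{}_{\bar{\beta}}$ and $\left( W^{0}{}_{\alpha \beta \bar{\gamma}} \right)_\circ$ guaranteed by \eqref{eq:NSCNG-AR0}, must be used to rule out cross-terms. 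One must also be scrupulous about the trace corrections, since the right-hand side of \eqref{eq:W-S-N2} is already partially symmetrized and partially tracefree, and the operation of full symmetrization followed by trace removal has to be applied consistently. Everything beyond this is routine tensor algebra with $\ul{\Nh}$ and $\ul{S}$.
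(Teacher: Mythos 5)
Your proposal is correct and follows essentially the paper's own route: the paper proves this theorem simply ``by inspection of \eqref{eq:W-S-N2}'', i.e.\ by taking the symmetric, $\ul{\bm{h}}$-tracefree part of that identity, under which the three quadratic Nijenhuis terms skew in a free index pair drop out and only $\ul{S}_{\alpha}{}^{\gamma}{}_{\beta}{}^{\delta} + \left( \ul{\Nh}_{\epsilon(\alpha\beta)}\ul{\Nh}^{\epsilon(\gamma\delta)}\right)_\circ$ survives, exactly as you argue. Your preliminary observations (that \eqref{eq:NSCNG-AR0} is automatic by Proposition \ref{prop:tw-in-Rob-NS-deg_W}, and that the sign ambiguity from index reordering is irrelevant for an equivalence of vanishing conditions) are sound and match the paper's setup.
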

\begin{rem}
	In dimension five, i.e.\ $m=2$, condition \eqref{eq:S+NN} reduces to $\ul{S}_{\alpha}{}^{\gamma}{}_{\beta}{}^{\delta} = 0$.
\end{rem}
Combining Theorem \ref{thm:main_int} and Theorem \ref{thm:main-pre-flatness} proves:
\begin{thm}\label{thm:main-flatness}
	Let $(\mc{M},\mbf{c},N,K)$ be a $(2m+2)$-dimensional twist-induced almost Robinson geometry, where $m>1$, with non-shearing congruence of null geodesics $\mc{K}$. Denote by $(\ul{H}, \ul{J})$ the induced partially integrable contact almost CR structure on the (local) leaf space $\ul{\mc{M}}$ of $\mc{K}$. The following statements are equivalent:
	\begin{enumerate}
		\item In addition to conditions \eqref{eq:NSCNG-AR0}, the Weyl tensor satisfies
		\begin{align*}
		W^0{}_{\alpha \beta \gamma} = \left( W_{(\alpha}{}^{(\beta}{}_{\gamma)}{}^{\delta)} \right)_\circ = 0 \, . 
		\end{align*}
		\item $(\ul{H}, \ul{J})$ is locally flat, i.e.\ the Chern--Moser tensor and  the Nijenhuis tensor of $(\ul{H}, \ul{J})$ vanish, i.e.\
		\begin{align*}
		\ul{S}_{\alpha \bar{\gamma} \beta \bar{\delta}} = \ul{\Nh}_{\alpha  \beta \gamma}  & = 0 \, .
		\end{align*}
	\end{enumerate}
	If any of these conditions holds, the Weyl tensor also satisfies conditions \eqref{eq:W_int0}.
\end{thm}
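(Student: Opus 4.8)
The plan is to deduce this theorem directly from Theorem \ref{thm:main_int} and Theorem \ref{thm:main-pre-flatness}, using the fact that the curvature hypothesis splits cleanly into the two independent pieces already analysed. First I would observe that the assumed curvature condition, namely that in addition to \eqref{eq:NSCNG-AR0} we have $W^0{}_{\alpha\beta\gamma}=0$ \emph{and} $\left( W_{(\alpha}{}^{(\beta}{}_{\gamma)}{}^{\delta)} \right)_\circ=0$, is precisely the conjunction of hypothesis (1) of Theorem \ref{thm:main_int} and hypothesis (1) of Theorem \ref{thm:main-pre-flatness}. Hence, by the equivalences established in those two theorems, it is equivalent to the conjunction: $(\ul H,\ul J)$ is integrable (so $\ul\Nh_{\alpha\beta\gamma}=0$), together with $\ul S_\alpha{}^\gamma{}_\beta{}^\delta + \left( \ul\Nh_{\epsilon(\alpha\beta)}\ul\Nh^{\epsilon(\gamma\delta)} \right)_\circ = 0$. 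The second, quadratic-in-$\ul\Nh$ term vanishes identically once $\ul\Nh_{\alpha\beta\gamma}=0$, so the pair of conditions collapses to $\ul\Nh_{\alpha\beta\gamma}=0$ and $\ul S_\alpha{}^\gamma{}_\beta{}^\delta=0$. Since the Chern--Moser tensor is the totally tracefree totally symmetric part of $\ul R_{\alpha\bar\gamma\beta\bar\delta}$, the vanishing of $\ul S_\alpha{}^\gamma{}_\beta{}^\delta$ is equivalent to the vanishing of $\ul S_{\alpha\bar\gamma\beta\bar\delta}$; and by the statement recalled after the definition of the Chern--Moser tensor in Section \ref{sec:CR_geom}, the simultaneous vanishing of $\ul\Nh_{\alpha\beta\gamma}$ and $\ul S_{\alpha\bar\gamma\beta\bar\delta}$ is exactly local CR flatness, i.e.\ $(\ul H,\ul J)$ is locally diffeomorphic to the CR sphere. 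This establishes (1)$\iff$(2).

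For the final assertion, that conditions \eqref{eq:W_int0} hold whenever (1) (equivalently (2)) does, I would simply invoke Theorem \ref{thm:main_int}: condition (1) of the present theorem includes $W^0{}_{\alpha\beta\gamma}=0$ together with \eqref{eq:NSCNG-AR0}, which is precisely condition (1) of Theorem \ref{thm:main_int}, and that theorem asserts that \eqref{eq:W_int2} and \eqref{eq:W_int_ex} -- i.e.\ \eqref{eq:W_int0} -- then follow. Alternatively, and perhaps more transparently, one reads this off from the explicit formulae \eqref{eq:Weyl-CR}: once $\ul\Nh_{\alpha\beta\gamma}=0$, equation \eqref{eq:W-DN} gives $W_{\alpha\beta\gamma\delta}=0$, \eqref{eq:W-DN2} gives $\left( W_{\bar\alpha\beta\gamma\delta} \right)_\circ=0$, and \eqref{eq:W-S-N2} together with $\ul S_{\alpha\bar\gamma\beta\bar\delta}=0$ gives $\left( W_{\beta\bar\delta\bar\gamma\alpha} \right)_\circ=0$.

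There is no real obstacle here: the content of the theorem is entirely bookkeeping, assembling two previously proved equivalences and noting that the quadratic Nijenhuis correction term in \eqref{eq:S+NN} is absorbed once integrability holds. The only point requiring a line of justification is the passage from $\ul S_\alpha{}^\gamma{}_\beta{}^\delta=0$ to $\ul S_{\alpha\bar\gamma\beta\bar\delta}=0$, which is immediate from the definition of $\ul S$ as the totally symmetric tracefree part of $\ul R_{\alpha\bar\gamma\beta\bar\delta}$ combined with reality of the Chern--Moser tensor. I would therefore keep the proof to a short paragraph citing Theorems \ref{thm:main_int} and \ref{thm:main-pre-flatness} and the characterisation of CR flatness in Section \ref{sec:CR_geom}.
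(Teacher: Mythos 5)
Your proposal is correct and follows the paper's own route exactly: the paper proves this theorem simply by combining Theorems \ref{thm:main_int} and \ref{thm:main-pre-flatness}, which is precisely your argument, with the observation that the quadratic Nijenhuis term in \eqref{eq:S+NN} drops out once integrability holds. Your extra remarks (reading \eqref{eq:W_int0} off \eqref{eq:Weyl-CR}, and the passage from $\ul{S}_{\alpha}{}^{\gamma}{}_{\beta}{}^{\delta}=0$ to $\ul{S}_{\alpha \bar{\gamma} \beta \bar{\delta}}=0$) are just the bookkeeping the paper leaves implicit.
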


The previous theorem immediately yields:
\begin{cor}\label{cor:Conf-CR flat}
	Let $(\mc{M}, \mbf{c})$ be an oriented locally conformally flat manifold of even dimension greater than four. Suppose that it admits an optical structure $K$ with twisting non-shearing congruence of geodesics $\mc{K}$. Then the twist of $\mc{K}$ induces a flat contact CR structure on the (local) leaf space of $\mc{K}$.
\end{cor}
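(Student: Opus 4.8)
The plan is to read this off as a direct specialisation of Theorem \ref{thm-int-cond2m} and Theorem \ref{thm:main-flatness} to the case of identically vanishing Weyl tensor, the only point needing attention being that ``twisting'' here can be upgraded to ``maximally twisting''.

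Since $(\mc{M},\mbf{c})$ is locally conformally flat, its Weyl tensor vanishes on $\mc{M}$; in particular condition \eqref{eq-weakestC} holds trivially. Note first that, by the projected integrability identity \eqref{eq-int-cond-non-shearing-pr}, the vanishing of $W^{0}{}_{i}{}^{0}{}_{j}$ is equivalent to the algebraic constraint \eqref{eq-F^2} on the twist; writing $\bm{\tau}_{i}{}^{j}$ as an $\bm{h}$-skew endomorphism of the screen bundle, \eqref{eq-F^2} says its square is a non-positive multiple of the identity, so if $\bm{\tau}\neq 0$ it is in fact a multiple of a bundle complex structure and hence has maximal rank. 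Thus the hypothesis that $\mc{K}$ is twisting and non-shearing together with local conformal flatness forces $\mc{K}$ to be maximally twisting, and Theorem \ref{thm-int-cond2m} applies: the twist of $\mc{K}$ induces an almost Robinson structure $(N,K)$ on $\mc{M}$ and, on the (local) leaf space $\ul{\mc{M}}$, a partially integrable \emph{contact} almost CR structure $(\ul{H},\ul{J})$ of positive definite signature.

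We may therefore regard $(\mc{M},\mbf{c},N,K)$ as a twist-induced almost Robinson geometry with non-shearing congruence of null geodesics and apply Theorem \ref{thm:main-flatness}. Its first condition --- conditions \eqref{eq:NSCNG-AR0} together with $W^{0}{}_{\alpha\beta\gamma}=0$ and $\left(W_{(\alpha}{}^{(\beta}{}_{\gamma)}{}^{\delta)}\right)_\circ=0$ --- holds trivially, since every component of the Weyl tensor is zero. Hence the equivalent second condition holds: the Chern--Moser tensor and the Nijenhuis tensor of $(\ul{H},\ul{J})$ both vanish, $\ul{S}_{\alpha\bar{\gamma}\beta\bar{\delta}}=\ul{\Nh}_{\alpha\beta\gamma}=0$. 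The vanishing of $\ul{\Nh}_{\alpha\beta\gamma}$ makes $(\ul{H},\ul{J})$ integrable, i.e.\ a genuine contact CR structure, and the additional vanishing of $\ul{S}_{\alpha\bar{\gamma}\beta\bar{\delta}}$ is precisely local CR flatness (local diffeomorphism with the CR sphere). This is the asserted flat contact CR structure on the leaf space of $\mc{K}$.

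The only step requiring any genuine argument, rather than mere citation, is the passage from ``twisting'' to ``maximally twisting'' noted above; everything else is an immediate consequence of the two cited theorems once $W\equiv 0$ is fed in. Consequently the ``main obstacle'' is essentially notational: ensuring that the induced structure on $\ul{\mc{M}}$ is genuinely contact so that Theorems \ref{thm-int-cond2m} and \ref{thm:main-flatness} can be invoked verbatim.
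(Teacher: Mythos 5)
Your proposal is correct and follows essentially the same route as the paper, which deduces the corollary immediately from Theorem \ref{thm-int-cond2m} and Theorem \ref{thm:main-flatness} once $W\equiv 0$ trivially verifies the curvature hypotheses. The extra step you spell out (that \eqref{eq-F^2} with nonzero twist and positive definite screen conformal structure forces the twist to be a multiple of a complex structure, hence of maximal rank) is exactly the content the paper builds into the equivalence of Theorem \ref{thm-int-cond2m} via \eqref{eq-int-cond-non-shearing-pr} and the discussion of Section \ref{sec-Robinson}, so it is a faithful unpacking rather than a different argument.
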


\begin{rem}
	Theorem \ref{thm:main-flatness} and Corollary \ref{cor:Conf-CR flat} should be contrasted with the situation in dimension four where:
	\begin{enumerate}
		\item CR flatness cannot be inferred from the Petrov types alone. For instance, the so-called \emph{Robinson congruence} whose underlying CR structure is flat, occurs in Minkowski space (Petrov type O), the Taub-NUT metric (Petrov type D) and Hauser's waves of Petrov type N -- see \cite{Nurowski2002,Trautman2002}.
		\item Conformal flatness does not imply CR flatness. Indeed, the Kerr theorem asserts that any analytic non-shearing congruence of null geodesics in conformally flat spacetime arises as the intersection of a complex submanifold of complex projective $3$-space $\mathbf{CP}^3$ and the real five-dimensional CR hyperquadric embedded therein. There are many examples of such complex submanifolds that do not give rise to a flat CR structure: for instance, the locus of a homogeneous polynomial of degree two in $\mathbf{CP}^3$ -- see \cite{Penrose1967,Penrose1986,Nurowski2002,Trautman2002}.
	\end{enumerate}
\end{rem}

\section{Algebraic conditions on the Weyl tensor}\label{sec:Weyl_tensor}
The purpose of this section is to justify an additional assumption on the Weyl tensor that we will be using when solving the Einstein field equations. We start by recalling the well-known fact \cite{Cartan1922,Petrov1954} that at any point $p$ of a four-dimensional Lorentzian conformal manifold  $(\mc{M},\mbf{c})$, the Weyl tensor determines four null directions, and thus four optical structures in a neighbourhood of $p$: any null vector $k$ that defines such a null direction is a solution of
\begin{align*}
\bm{\kappa}_{[a} W_{b] e f [c} \bm{\kappa}_{d]} k^e k^f & = 0 \, , & \mbox{where $\bm{\kappa} = \bm{g}(k,\cdot)$,}
\end{align*}
and (the span of) $k$ is said to be a \emph{principal null direction} of the Weyl tensor at $p$. If $k$ satisfies the stronger condition
\begin{align*}
k^d W_{d a e [b} \bm{\kappa}_{c]} k^e & = 0 \, , & \mbox{where $\bm{\kappa} = \bm{g}(k,\cdot)$,}
\end{align*}
the corresponding null direction is said to be \emph{repeated}. In this case, we say that the Weyl tensor is \emph{algebraically special} at $p$, or of Petrov type II or more degenerate at $p$. This condition is particularly important in general relativity especially in relation to the Goldberg--Sachs theorem \cite{Goldberg1962,Goldberg2009} as explained in the introduction.

In the present context, our starting point is an optical geometry $(\mc{M},\mbf{c},K)$ of dimension $2m+2$ greater than four, with twisting non-shearing congruence of null geodesics $\mc{K}$. As in dimension four, the line distribution $K$ provides a number of criteria on the basis of which we may describe the algebraic degeneracy of the Weyl tensor as is discussed in e.g.\ \cite{Milson2005a,Ortaggio2009b}. One such degeneracy criterion is that for any optical vector field $k$, the Weyl tensor satisfies
\begin{align}\label{eq:Weyl_deg_II}
\bm{\kappa}_{[a} W_{b c] f [d} \bm{\kappa}_{e]} k^f & = 0 \, ,  & \mbox{where $\bm{\kappa} = \bm{g}(k,\cdot)$.}
\end{align}
Taking the trace and the tracefree part of condition \eqref{eq:Weyl_deg_II} yields the two respective weaker conditions
\begin{align}
k^d W_{d a e [b} \bm{\kappa}_{c]} k^e & = 0 \, , \label{eq:W_I_inv} \\
\left( \bm{\kappa}_{[a} W_{b c] f [d} \bm{\kappa}_{e]} k^f \right)_\circ & = 0 \, . \label{eq:W_II_inv}
\end{align}
Condition \eqref{eq:W_II_inv} has no analogue in dimension four, and turns out to be too strong under general assumptions. Instead, we shall show that condition \eqref{eq:W_I_inv} will be sufficient to facilitate our computations. 

Let us examine the consequence of each of these conditions in the present context. As before, we  assume the setting of Theorem \ref{thm-int-cond2m} and work with a metric $g$ given by \eqref{eq:g_can}  in an adapted frame.

\subsection{Degeneracy condition \eqref{eq:W_I_inv}}
Let us rewrite condition \eqref{eq:W_I_inv} as
\begin{align}
W_{\alpha}{}^{0}{}_{\beta}{}^{0} = W_{\alpha}{}^{0}{}_{\bar{\beta}}{}^{0} & = 0 \, , \label{eq:Weyl_degI_triv} \\
W_{\alpha}{}^{0}{}_{0}{}^{0} & = 0 \, . \label{eq:Weyl_degI}
\end{align}
Conditions \eqref{eq:Weyl_degI_triv} is trivially satisfied by virtue of Theorem \ref{thm-int-cond2m}. On the other hand, from equations \eqref{eq:R-diffE} and \eqref{eq:Ric-diffE}, the LHS of \eqref{eq:Weyl_degI} yields
\begin{align*}
W_{\alpha}{}^{0 0}{}_{0}  & =  \frac{1}{2m} \left( (2m-1) \dot{E}_{\alpha}   - ( 2m-4 ) \ii  E_{\alpha}  \right) \, .
\end{align*}
We thus see that condition \eqref{eq:W_I_inv} is equivalent to
\begin{align}\label{eq:ODE_Weyl}
\dot{E}_{\alpha} & = \frac{2m-4}{2m-1}  \ii E_{\alpha}  & & \mbox{i.e.} & &\ddot{\lambda}_{\alpha}  = \frac{2m-4}{2m-1}  \ii \dot{\lambda}_{\alpha}   \, .
\end{align}
This ODE has general solution
\begin{subequations}\label{eq:lambda_special}
	\begin{align}
	\lambda_{\alpha} & = - 2 \ii \frac{2m-1}{2m-4} \ul{E}_{\alpha} \ee^{\frac{2m-4 }{2m-1} \ii  \phi}  + \ul{\lambda}_{\alpha} \, , & m \neq 2 \, ,\\
	\lambda_{\alpha} & = 2 \ul{E}_{\alpha} \phi + \ul{\lambda}_{\alpha} \, , & m = 2 \, ,
	\end{align}
\end{subequations}
for some functions $\ul{E}_{\alpha}$ and $\ul{\lambda}_{\alpha}$ on $(\ul{\mc{M}},\ul{H},\ul{J})$.

\subsection{Degeneracy condition \eqref{eq:W_II_inv}}
Let us now rewrite the LHS of \eqref{eq:W_II_inv} as $\left( W^{0}{}_{i j k} \right)_\circ$. This tensor splits into three components under the structure group of the almost Robinson manifold  \cite{Taghavi-Chabert2014}:
\begin{itemize}
	\item $\left( W^{0}{}_{\alpha \beta \bar{\gamma}} \right)_\circ$, which vanishes -- see \eqref{eq:NSCNG-AR-x} of Proposition \ref{prop:tw-in-Rob-NS-deg_W}, or \eqref{eq:R-E},
	\item $W^{0}{}_{\alpha \beta \gamma}$, which is proportional to the Nijenhuis tensor -- see \eqref{eq:W-N} or \eqref{eq:R-T},
	\item and the trace of $\left( W^{0}{}_{i j k} \right)_\circ$ with respect to the bundle Hermitian form $\omega_{i j}$ induced from the twist of $k$, and which is proportional to $E_i$ by a constant factor, as follows from \eqref{eq:R-diffE}, \eqref{eq:R-E} and \eqref{eq:Ric-diffE}.
\end{itemize}
Hence, given our assumptions, condition \eqref{eq:W_II_inv} holds if and only if  $\ul{\Nh}_{\alpha \beta \gamma} =0$ and $E_\alpha = 0$. In the context of finding solutions to Einstein field equations, such a condition is too strong since we will allow the almost Robinson structure to be non-integrable, i.e.\ $\ul{\Nh}_{\alpha \beta \gamma}$ does not necessarily vanish. We shall show however at Step 2 of Section \ref{sec:Computations} that $\lambda_{\alpha}$, and thus $E_{\alpha}$, must eventually vanish.

\section{Einstein metrics}\label{sec:Einstein}
Throughout this section, we assume that $(\mc{M},\mbf{c},N, K)$ is a twist-induced almost Robinson geometry of dimension $2m+2$ with non-shearing congruence of null geodesics $\mc{K}$ as in Theorem \ref{thm-int-cond2m}. We assume $m>1$, only pointing out notable differences when $m=1$. We now seek a metric in the conformal class $\mbf{c}$, which satisfies the Einstein field equations \eqref{eq:EFE} below.

We proceed as follows. Let $g$ and $\wh{g}$ be two metrics in $\mbf{c}$ related by \eqref{eq-conf-res}. For convenience, and with no loss of generality, we take $g$ to be in the conformal subclass $\accentset{n.e.}{\mbf{c}}$ so that it takes the form \eqref{eq:g_can} and we can use the computation of the curvature given in Appendix \ref{app:computations}. The reader should also refer to Section \ref{sec-Robinson} for the general setup and notation. The unknown metric $\wh{g}$ must satisfy the Einstein field equation with pure radiation
\begin{align}\label{eq:EFE}
\wh{\Ric}_{a b} & = \Lambda \wh{g}_{a b} + \Phi \wh{\kappa}_a \wh{\kappa}_b \, ,
\end{align}
where $\Lambda$ is the cosmological constant and $\Phi$ a smooth function on $\mc{M}$, which \emph{may or may not be identically zero}. In terms of the Schouten tensor, these read as
\begin{align}\label{eq:EFE_Rho}
\wh{\Rho}_{a b} & = \frac{1}{2(2m+1)} \Lambda \wh{g}_{a b} +  \frac{1}{2m} \Phi \wh{\kappa}_a \wh{\kappa}_b \, .
\end{align}
At Step 2 below, we shall also impose condition \eqref{eq:W_I_inv} on the Weyl tensor.

\subsection{Computations}\label{sec:Computations}
Following the strategy set up in \cite{Lewandowski1990}, we shall integrate the components of the Einstein field equations \eqref{eq:EFE_Rho} successively. The computation of the components of $\widehat{\Rho}_{a b}$ will be achieved using the transformation law \eqref{eq:Rho_transf} between the Schouten tensors  of $g$ and $\wh{g}$, and the components of the curvature tensors given in Appendix \ref{app:computations}. The computation is lengthy and we have deliberately left out many details.

\begin{fleqn}
	\begin{align}
	\quad \bullet \; \mbox{\textbf{Step 1:}} & &  \widehat{\Rho}{}^{0 0} = 0  \, . \label{eq:EFE00} 
	\end{align}
\end{fleqn}
We compute
\begin{align*}
\widehat{\Rho}{}^{0 0}  & =  1 - \ddot{\varphi} + \dot{\varphi}^2 \, .
\end{align*}
Hence, equation \eqref{eq:EFE00} holds if and only if
\begin{align*}
\ddot{\varphi} - \dot{\varphi}^2 = 1 \, .
\end{align*}
The general solution to this equation is $\varphi = \frac{1}{2} \ul{\varphi} - \ln|\cos(\phi+\ul{\psi})|$ where $\ul{\psi}$ and $\ul{\varphi}$ are functions on $\ul{\mc{M}}$.
Hence the conformal factor is given by $\ee^{2 \varphi} =\frac{ \ee^{\ul{\varphi}}}{\cos^2(\phi+\ul{\psi})}$.
We can use $\ul{\varphi}$ to change adapted CR frames by absorbing $\ee^{\ul{\varphi}}$ into $\ul{\theta}^0$. Further, a change of affine parameter along the geodesics of $k$ together with some redefinitions of the functions $\lambda_{\alpha}$ and $\lambda_0$ can be used to eliminate the function $\ul{\psi}$. Thus, with no loss of generality, we shall take
\begin{align*}
\varphi & = - \ln|\cos \phi | \, ,
\end{align*}
Hence, the conformal factor can be taken to be
\begin{align*}
\ee^{2 \varphi} & =\frac{1}{\cos^2 \phi} = \sec^2 \phi\, .
\end{align*}
For future use, we record $\dot{\varphi} = \tan \phi$ and $\ddot{\varphi} = \sec^2 \phi$.

\begin{rem}\label{rem:circle}
	The inverse of the conformal factor is a periodic function of period $\pi$, and one can already anticipate that the metric $\wh{g}$ is defined on a circle bundle over $\ul{\mc{M}}$. With no loss of generality, we may choose $\phi$ to lie in the interval $(-\frac{\pi}{2} , \frac{\pi}{2})$. This is particularly relevant to the relation with Fefferman--Einstein metrics given in Section \ref{sec:Feff-Einstein}.
\end{rem}

\begin{fleqn}
	\begin{align}
	\quad \bullet \; \mbox{\textbf{Step 2:}} & & \widehat{\Rho}_{\alpha}{}^{0} = 0 \, . \label{eq:EFEA0} 
	\end{align}
	We find
\end{fleqn}
\begin{align*}
\wh{\Rho}_{\alpha}{}^{0} & = \frac{1}{2m} (\dot{E}_{\alpha} - 4 \ii E_{\alpha} ) + ( 1-  \ii  \dot{\varphi} ) \lambda_\alpha + \dot{\varphi} E_{\alpha}
\end{align*}
Hence, equation \eqref{eq:EFEA0} holds if and only if
\begin{align}\label{eq:ODE_step2}
\ddot{\lambda}_{\alpha} - \left( 4 \ii  - 2m \tan \phi \right) \dot{\lambda}_{\alpha} + 4m ( 1-  \ii  \tan \phi ) \lambda_\alpha  & = 0 \, .
\end{align}
The general solution of this second-order linear ODE will be treated elsewhere. Instead, at this stage, we shall impose the condition \eqref{eq:W_I_inv} so that $\lambda_{\alpha}$ takes the form \eqref{eq:lambda_special}. Plugging \eqref{eq:lambda_special} into the above equation and assuming $m > 1$ yields
\begin{align*}
\lambda_{\alpha} & = 0 \, .
\end{align*}

\begin{rem}
	In dimension four, that is, when $m=1$, the situation is remarkably different. For one, if one assumes the Einstein condition, the degeneracy condition \eqref{eq:W_I_inv} on the Weyl tensor \emph{follows} from the Goldberg--Sachs theorem. In addition, the general solution of the system of ODEs \eqref{eq:ODE_Weyl} and \eqref{eq:ODE_step2} is given by
	\begin{align*}
	\lambda_{\alpha} & = \left( 1 + \frac{1}{2} \ee^{-2\ii \phi} \right) \ul{\lambda}_{\alpha}  \, ,
	\end{align*}
	for some smooth functions $\ul{\lambda}_{\alpha}$ on $\ul{\mc{M}}$. We thus recover the result already obtained in \cite{Lewandowski1990}. The fact that $\lambda_{\alpha}$ may be non-zero here allows many more possible solutions of the Einstein field equations.
\end{rem}

\begin{fleqn}
	\begin{align}
	\quad \bullet \; \mbox{\textbf{Step 3:}} & &  \widehat{\Rho}_{\alpha \beta} = 0 \, . \label{eq:EFEAB} 
	\end{align}
\end{fleqn}
We compute
\begin{align*}
\widehat{\Rho}_{\alpha \beta}  & = \frac{1}{m}  \ul{\nabla}^{\gamma}{\ul{\Nh}_{\gamma (\alpha \beta)}} + \frac{\ii}{2} (1 + \ii  \dot{\varphi}  ) \ul{A}_{\alpha \beta} \, .
\end{align*}
Hence, equation \eqref{eq:EFEAB} holds if and only if
\begin{subequations}\label{eq:EFE->A_DN=0}
	\begin{align}
	\ul{A}_{\alpha \beta} & =  0 \, , \label{eq:EFE->A=0} \\
	\ul{\nabla}^{\gamma}{\ul{\Nh}_{\gamma (\alpha \beta)}} & = 0 \, . \label{eq:EFE->DN=0} 
	\end{align}
\end{subequations}
\begin{fleqn}
	\begin{align}
	\quad \bullet \; \mbox{\textbf{Step 4:}} & &  \left( \wh{\Rho}_{\alpha \bar{\beta}} \right)_\circ = 0 \, . \label{eq:EFEABMo} 
	\end{align}
\end{fleqn}
We compute
\begin{multline}\label{eq:whRhoABM}
\widehat{\Rho}_{\alpha}{}^{\beta} = \frac{1}{2m}  \left(  \ul{\Ric}_{\alpha}\,^{\beta} - \ul{\Nh}_{\alpha \delta \gamma} \ul{\Nh}^{\beta \delta \gamma} \right)   \\
- \frac{1}{2m(2m+1)}  \left(\frac{1}{2} \ddot{\lambda}\,_{0} + \left( 2 (m+1)^2 - m(2m+1) \ddot{\varphi}  \right)\lambda_0  + m \ul{\Lambda} \right) \delta_{\alpha}^{\beta} \, ,
\end{multline}
where we have defined
\begin{align*}
\ul{\Lambda}  & := \frac{1}{m} \left( \ul{\Sc} -  \ul{\Nh}_{\alpha \delta \gamma} \ul{\Nh}^{\alpha \delta \gamma} \right)   \, .
\end{align*}
Taking the tracefree part of \eqref{eq:whRhoABM}, we immediately conclude that \eqref{eq:EFEABMo} holds if and only if
\begin{align}\label{eq:EFE->Ric-N=const}
\ul{\Ric}_{\alpha}\,^{\beta} -\ul{\Nh}_{\alpha \delta \gamma} \ul{\Nh}^{\beta \delta \gamma}   & = \ul{\Lambda} \delta_{\alpha}^{\beta} \, .
\end{align}
Condition \eqref{eq:EFE->Ric-N=const} together with conditions \eqref{eq:EFE->A_DN=0} allows us to conclude immediately that the almost CR manifold is CR--Einstein, and in particular, $\ul{\Lambda}$ is  constant.

\begin{fleqn}
	\begin{align}
	\quad \bullet \; \mbox{\textbf{Step 5:}} & &   \wh{\Rho} = \frac{m+1}{2m+1}  \Lambda \, . \label{eq:EFESc} 
	\end{align}
\end{fleqn}
We take the trace of equation \eqref{eq:whRhoABM} to find
\begin{align}\label{eq:whRhoABM-tr}
\widehat{\Rho}_{\alpha}{}^{\alpha} 
& =  - \frac{1}{2(2m+1)}  \left(\frac{1}{2} \ddot{\lambda}\,_{0} + \left( 2 (m+1)^2 - m(2m+1) \ddot{\varphi}  \right)\lambda_0   \right)  + \frac{m+1}{2(2m+1)} \ul{\Lambda}  \, ,
\end{align}
and we  compute
\begin{align}
\widehat{\Rho}_{0}{}^{0} 
& = -  \frac{1}{2(2m+1)}  \ul{\Lambda}  
+  \frac{1}{2(2m+1)}  \ddot{\lambda}\,_{0}  + \frac{1}{2} \dot{\varphi}  \dot{\lambda}_0  + \left( \frac{m+1}{2m+1}
+ \frac{1}{2} \ddot{\varphi} \right) \lambda_0   \, . \label{eq:whRho00M}
\end{align}
Hence, using the fact that
$\wh{\Rho} = 2 \ee^{-2 \varphi} \left( \wh{\Rho}_{0}{}^0 + \wh{\Rho}_{\alpha}{}^{\alpha} \right)$,
we find
\begin{align*}
\ee^{2 \varphi} \widehat{\Rho} & = \frac{1}{2(2m+1)} \ddot{\lambda}\,_{0} +  \dot{\varphi}  \dot{\lambda}_0  + \left( -\frac{2m (m+1)}{2m+1}  + (m+1) \ddot{\varphi}  \right)\lambda_0     + \frac{m}{2m+1} \ul{\Lambda}  \, .
\end{align*}
Therefore, equation \eqref{eq:EFESc} holds if and only if
\begin{multline}\label{eq1}
\ddot{\lambda}\,_{0}  + 2(2m+1)  \tan \phi \dot{\lambda}_0     +   \left( - 4m(m+1) +  2 (m +1 ) (2m+1) \sec^2 \phi  \right)\lambda_0   = \\
2 (m+1)\Lambda \sec^2 \phi -  2 m \ul{\Lambda} \, .
\end{multline}
Equation \eqref{eq1} is a second-order linear ODE with general solution
\begin{multline}\label{eq-soln}
\lambda_0 = \frac{1}{2m+2} \ul{\Lambda}  +   \left( \frac{1}{2m+1} \Lambda - \frac{1}{2m+2} \ul{\Lambda} \right)  \sum_{j=0}^{m} a_{j} \cos^{2 j} \phi  \\ + \ul{b}  \cos^{2m+2} \phi  
+ \ul{c} \cos^{2m+1} \phi \sin \phi \, ,
\end{multline}
where the  $a_i$ are given by
\begin{align*}
a_{0} = 1  \, , & & a_{j} =  \frac{2m-2j+4}{2m-2j+1 }  a_{j-1} \, , & & j = 1, \ldots, m \, ,
\end{align*}
and $\ul{b}$ and $\ul{c}$ are arbitrary functions on $\ul{\mc{M}}$.

\begin{fleqn}
	\begin{align}
	\quad \bullet \; \mbox{\textbf{Step 6:}} & &   \widehat{\Rho}_{0}{}^{0} =\frac{1}{2(2m+1)} \Lambda \ee^{2 \varphi}  \, . \label{eq:EFE00M} 
	\end{align}
\end{fleqn}
From equation \eqref{eq:whRho00M}, we conclude that equation \eqref{eq:EFE00M} holds if and only if
\begin{align}
\ddot{\lambda}\,_{0}  +  (2m+1) \tan \phi \dot{\lambda}_0  + \left( 2(m+1)
+ (2m+1) \sec^2 \phi \right) \lambda_0  
& = \Lambda \sec^2 \phi
+ \ul{\Lambda}  \, . \label{eq2}
\end{align}
Since the function \eqref{eq-soln} is also a solution of \eqref{eq2}, we will be able to reduce the number of arbitrary functions to one. To facilitate the computation, and for future use, we reduce the second-order ODEs to a first-order ODE by plugging equation \eqref{eq1} into \eqref{eq2}. We find
\begin{align}\label{eq4}
\tan \phi \dot{\lambda}_0  - \left( 2m+2 - (2m+1) \sec^2 \phi \right) \lambda_0  
& =  \Lambda \sec^2 \phi - \ul{\Lambda} \, .
\end{align}
It remains to plug the solution \eqref{eq-soln} into \eqref{eq4},  which gives the relation
\begin{align}\label{eq:bam}
\ul{b} & =  - 2 \left( \frac{1}{2m+1} \Lambda - \frac{1}{2m+2} \ul{\Lambda} \right) a_{m} \, .
\end{align}
At this stage, all the coefficients of the function $\lambda_0$ with the exception of $\ul{c}$ have been determined.

\begin{rem}
	One can alternatively use $\wh{\Rho}_{\alpha}{}^{\alpha} = \frac{m}{2(2m+1)} \Lambda \ee^{2 \varphi}$. In this case, equation \eqref{eq:whRhoABM-tr} tells us that $\lambda_0$ must satisfy
	\begin{align}\label{eq3}
	\ddot{\lambda}\,_{0} +\left( 4 (m+1)^2 - 2m(2m+1) \sec^2 \phi  \right)\lambda_0  & = - 2 m \Lambda  \sec^2 \phi
	+  2 (m+1) \ul{\Lambda}  \, .
	\end{align}
	Plugging \eqref{eq3} into \eqref{eq1} also yields \eqref{eq4}, from which the same conclusion \eqref{eq:bam} follows.
\end{rem}

\begin{fleqn}
	\begin{align}
	\quad \bullet \; \mbox{\textbf{Step 7:}} & &  \widehat{\Rho}_{\alpha 0} = 0 \, . \label{eq:EFEA0D} 
	\end{align}
\end{fleqn}
We compute
\begin{align*}
\widehat{\Rho}_{\alpha 0} & =  \left(  - \frac{1}{4m} \cos^{2m} \phi + \frac{\ii}{2m} \cos^{2m+1} \phi \sin \phi + \frac{1}{2m} \cos^{2m+2} \phi \right) \ul{\nabla}_{\alpha} \ul{c} \, .
\end{align*}
Hence, equation \eqref{eq:EFEA0D} holds if and only if $\ul{\nabla}_{\alpha} \ul{c} = 0$. We can take a covariant derivative $\ul{\nabla}_{\bar{\beta}}$ of this expression and commute the derivatives so that the first of the commutation relations \eqref{eq:CR_com} together with the fact that $\ul{c}$ is real allows us to conclude $\ul{\nabla}_{0} \ul{c} = 0$, i.e.\  $\ul{c}$ is constant.  

We have now completely determined the function $\lambda_0$, and thus the $1$-form $\lambda$ and metric $\wh{g}$. There remains to check, in the final step, under which conditions the metric $\wh{g}$ is consistent with the remaining component of \eqref{eq:EFE_Rho}.
\begin{fleqn}
	\begin{align}
	\quad \bullet \; \mbox{\textbf{Step 8:}} & &  \wh{\Rho}_{0 0} = \Phi \ee^{4 \varphi}  \, . \label{eq:EFE00D} 
	\end{align}
\end{fleqn}
Our previous findings lead us to assert that $\widehat{\Rho}_{0 0}  = 0$. Thus, equation \eqref{eq:EFE00D} holds if and only if $\Phi = 0$,
i.e.\ our hypotheses do not allow for the presence of pure radiation on $(\mc{M}, \wh{g})$.

\subsection{Conclusions}
From the computations of Section \ref{sec:Computations}, we extract the following results:
\begin{thm}\label{thm:Ricci-deg}
	Let $(\mc{M},\mbf{c},K)$ be a $(2m+2)$-dimensional conformal optical geometry, where $m>1$, with twisting non-shearing congruence of null geodesics $\mc{K}$. Suppose that the Weyl tensor satisfies
	\begin{align*}
	W (k, v, k, \cdot) & = 0 \, , &  \mbox{for any sections $k$ of $K$, $v$ of $K^\perp$,}
	\end{align*}
	so that  $(\mc{M},\mbf{c},K)$ admits a twist-induced almost Robinson structure $(N,K)$. Denote by $(\ul{H}, \ul{J})$ the induced partially integrable contact almost CR structure of positive definite signature on the (local) leaf space $\ul{\mc{M}}$ of $\mc{K}$, and by $\varpi$ the natural projection from $\mc{M}$ to $\ul{\mc{M}}$.
	\begin{enumerate}
		\item Suppose that $\mbf{c}$ contains a metric $\wh{g}$ whose Ricci tensor satisfies \label{item:Ricci-deg1}
		\begin{align*}
		\wh{\Ric}(w,w) & = 0 \, , &  \mbox{for any sections $w$ of $N$.}
		\end{align*}
		Then $\mc{M}$ is locally diffeomorphic to $\left( -\frac{\pi}{2} , \frac{\pi}{2} \right) \times \ul{\mc{M}}$, and there is a distinguished contact form $\ul{\theta}^0$ for $(\ul{H}, \ul{J})$ with Levi form $\ul{h}$ such that the pseudo-Hermitian torsion tensor $\ul{A}_{\alpha \beta}$ and the Nijenhuis tensor $\ul{\Nh}_{\gamma \alpha \beta}$ satisfy $\ul{A}_{\alpha \beta} = \ul{\nabla}^{\gamma}{\ul{\Nh}_{\gamma (\alpha \beta)}} = 0$, and the metric $\wh{g}$ takes the form
		\begin{align*}
		\wh{g} & = \sec^2 \phi \, g \, , & \mbox{for $- \frac{\pi}{2} < \phi < \frac{\pi}{2}$,} 
		\end{align*}
		where
		\begin{align*}
		g & =2 \, \kappa \odot \lambda + h  \, , \\
		\kappa & = 2 \, \varpi^* \ul{\theta}^0 \, , &
		h & = \varpi^* \ul{h} \, , &
		\lambda & = \dd \phi + \lambda_0 \, \varpi^* \ul{\theta}^0 \, , 
		\end{align*}
		for some smooth function $\lambda_0$ on $\mc{M}$.
		\item Suppose that $\mbf{c}$ contains a metric $\wh{g}$ whose Ricci tensor satisfies \label{item:Ricci-deg2}
		\begin{align*}
		\wh{\Ric}(v,v) & = \Lambda \wh{g}(v,v)  \, , & & \mbox{for any section $v$ of $K^\perp$,} \\
		\wh{\Ric}(k,\cdot) & = \Lambda \wh{g}(k,\cdot) \, , & & \mbox{for any section $k$ of $K$,}
		\end{align*}
		for some constant $\Lambda$, i.e.\ $\kappa_{[a} \wh{\Ric}_{b] [c} \kappa_{d]} = \Lambda \kappa_{[a} \wh{g}_{b] [c} \kappa_{d]}$ and $k^a \wh{\Ric}_{a}{}^{b}  = \Lambda k^b$, for any optical $1$-form $\kappa_a$ and optical vector field $k^a$.
		Then part \eqref{item:Ricci-deg1} holds. In addition, the almost pseudo-Hermitian structure $(\ul{H},\ul{J},\ul{\theta}^0)$ is almost CR--Einstein, and the function $\lambda_0$ is now given by
		\begin{multline*}
		\lambda_0 = \frac{\ul{\Lambda} }{2m+2}  +   \left( \frac{\Lambda}{2m+1}  - \frac{\ul{\Lambda} }{2m+2} \right)\left(  \sum_{j=0}^{m} a_{j} \cos^{2 j} \phi   -2  a_{m}  \cos^{2m+2} \phi \right) \\
		+ \ul{c} \cos^{2m+1} \phi \sin \phi \, ,
		\end{multline*}
		where $ \ul{\Lambda}  := \frac{1}{m} \left( \ul{\Sc} -  \ul{\Nh}_{\alpha \delta \gamma} \ul{\Nh}^{\alpha \delta \gamma} \right)$, $\ul{c}$ is a smooth function on $\ul{\mc{M}}$, and 
		\begin{align*}
		a_{0} = 1  \, , & & a_{j} =  \frac{2m-2j+4}{2m-2j+1 }  a_{j-1} \, , & & j = 1, \ldots, m \, .
		\end{align*}
		\item Suppose that $\mbf{c}$ contains a metric $\wh{g}$ whose Ricci tensor satisfies \label{item:Ricci-deg3}
		\begin{align*}
		\wh{\Ric}(v,\cdot) & = \Lambda \wh{g}(v,\cdot) \, , &  \mbox{for any section $v$ of $K^\perp$,}
		\end{align*}
		i.e.\ $\kappa_{[a} \wh{\Ric}_{b] c} = \Lambda \kappa_{[a} \wh{g}_{b] c}$ for any optical $1$-form $\kappa_a$. Then part \eqref{item:Ricci-deg2} holds. In addition, the function $\ul{c}$ in $\lambda_0$ is now a constant.
		
		Further, $\wh{g}$ must be Einstein. In other words, $\mbf{c}$ cannot contain a metric $\wh{g}$ that satisfies the Einstein field equation with pure radiation \eqref{eq:EFE} with non-zero $\Phi$.
	\end{enumerate}
\end{thm}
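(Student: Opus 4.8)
All the analytic work is the chain of successive integrations carried out in Section~\ref{sec:Computations}; what remains is to organise those computations and to track which hypothesis feeds which step. Fix a metric $g$ in $\accentset{n.e.}{\mbf{c}}$ written in the canonical form \eqref{eq:g_can} adapted to the twist-induced almost Robinson structure provided by Theorem~\ref{thm-int-cond2m}, write the unknown metric as $\wh{g}=\ee^{2\varphi}g$, and compute the components of $\wh{\Rho}_{ab}$ in the adapted frame from the conformal transformation law \eqref{eq:Rho_transf} and the curvature formulas of Appendix~\ref{app:computations}. The three hypotheses are nested: since $N$ is totally null, $\wh{\Ric}(w,w)=0$ for sections $w$ of $N$ amounts to $\wh{\Rho}^{00}=0$, $\wh{\Rho}_{\alpha}{}^{0}=0$ and $\wh{\Rho}_{\alpha\beta}=0$ (Steps~1--3); the hypothesis of part~\eqref{item:Ricci-deg2} adds $\left(\wh{\Rho}_{\alpha\bar{\beta}}\right)_\circ=0$ together with the scalar conditions $\wh{\Rho}=\tfrac{m+1}{2m+1}\Lambda$ and $\wh{\Rho}_{0}{}^{0}=\tfrac{1}{2(2m+1)}\Lambda\ee^{2\varphi}$ (Steps~4--6); and the hypothesis of part~\eqref{item:Ricci-deg3} adds $\wh{\Rho}_{\alpha 0}=0$ (Step~7). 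Since this last hypothesis forces $\wh{\Ric}_{ab}-\Lambda\wh{g}_{ab}$ to be a multiple of $\bm{\kappa}_a\bm{\kappa}_b$, it is in fact equivalent to the pure-radiation Einstein equation \eqref{eq:EFE}, so proving that $\wh{g}$ is Einstein is the same as showing $\Phi=0$ in \eqref{eq:EFE}.

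Parts~\eqref{item:Ricci-deg1} and~\eqref{item:Ricci-deg2} are obtained by running Steps~1--6. Step~1 solves $\ddot{\varphi}-\dot{\varphi}^2=1$; the residual freedom is absorbed by reparametrising the geodesics of $k$ and changing the adapted CR frame, leaving $\varphi=-\ln|\cos\phi|$, hence the diffeomorphism $\mc{M}\cong\left(-\tfrac{\pi}{2},\tfrac{\pi}{2}\right)\times\ul{\mc{M}}$. Step~2 is where the Weyl hypothesis enters: $W(k,v,k,\cdot)=0$ is equivalent to the degeneracy condition \eqref{eq:W_I_inv}, which restricts $\lambda_{\alpha}$ to the form \eqref{eq:lambda_special}; plugging this into the second-order ODE coming from $\wh{\Rho}_{\alpha}{}^{0}=0$ and using $m>1$ gives $\lambda_{\alpha}=0$. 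Step~3, from $\wh{\Rho}_{\alpha\beta}=0$, produces $\ul{A}_{\alpha\beta}=0$ and $\ul{\nabla}^{\gamma}\ul{\Nh}_{\gamma(\alpha\beta)}=0$, completing part~\eqref{item:Ricci-deg1}. For part~\eqref{item:Ricci-deg2}, the tracefree part of $\wh{\Rho}_{\alpha}{}^{\beta}$ in Step~4 gives $\ul{\Ric}_{\alpha}{}^{\beta}-\ul{\Nh}_{\alpha\delta\gamma}\ul{\Nh}^{\beta\delta\gamma}=\ul{\Lambda}\,\delta^{\beta}_{\alpha}$, so by Proposition~\ref{prop:CR-Einstein_Ric} the triple $(\ul{H},\ul{J},\ul{\theta}^0)$ is almost CR--Einstein with $\ul{\Lambda}$ constant. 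Steps~5 and~6 turn the two scalar conditions into second-order linear ODEs for $\lambda_0$ with general solution \eqref{eq-soln}; eliminating the second derivative between them gives the first-order ODE \eqref{eq4}, and feeding \eqref{eq-soln} into \eqref{eq4} determines the coefficient $\ul{b}$ as in \eqref{eq:bam}, leaving $\ul{c}$ a free function.

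For part~\eqref{item:Ricci-deg3} and the final statement, continue with Steps~7 and~8. In Step~7 the component $\wh{\Rho}_{\alpha 0}$ is found to be proportional to $\ul{\nabla}_{\alpha}\ul{c}$, so $\ul{\nabla}_{\alpha}\ul{c}=0$; commuting covariant derivatives via the first relation in \eqref{eq:CR_com} and using that $\ul{c}$ is real gives $\ul{\nabla}_{0}\ul{c}=0$, hence $\ul{c}$ is constant and all data in $\wh{g}$ is now fixed. The final assertion is Step~8: one computes the single remaining component $\wh{\Rho}_{00}$ from \eqref{eq:Rho_transf} and the Appendix formulas --- using $\lambda_{\alpha}=0$, the explicit $\lambda_0$ with constant $\ul{c}$, and the almost CR--Einstein relations --- and finds that it vanishes identically. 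Comparing with the $\ell\ell$-component of \eqref{eq:EFE_Rho}, i.e.\ equation \eqref{eq:EFE00D} in the form $\wh{\Rho}_{00}=\Phi\,\ee^{4\varphi}$, and using $\ee^{4\varphi}\neq 0$, we conclude $\Phi=0$; thus $\wh{g}$ is Einstein.

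The main obstacle is the Step~8 identity $\wh{\Rho}_{00}\equiv 0$: it requires substituting the full trigonometric expression for $\lambda_0$ --- including the $\ul{c}\cos^{2m+1}\phi\sin\phi$ term --- into the curvature formulas and checking that all dependence on $\phi$ cancels. This succeeds precisely because $\lambda_0$ already solves the ODEs of Steps~5--7 (equivalently \eqref{eq4}) and the leaf-space data is almost CR--Einstein, so no new cancellation has to be arranged: what is left is only an algebraic consistency to be verified, but doing so honestly through Appendix~\ref{app:computations} is where the bulk of the labour sits.
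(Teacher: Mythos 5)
Your proposal is correct and follows essentially the same route as the paper: the paper's proof of this theorem is precisely the observation that part (1) follows from Steps 1--3, part (2) from Steps 1--6, and part (3) from Steps 1--8 of Section \ref{sec:Computations}, with the Weyl hypothesis entering at Step 2 via \eqref{eq:W_I_inv} and the final Einstein claim being the vanishing of $\Phi$ forced by $\wh{\Rho}_{00}=0$ at Step 8. Your added bookkeeping --- translating each Ricci hypothesis into the frame components $\wh{\Rho}^{00}$, $\wh{\Rho}_{\alpha}{}^{0}$, $\wh{\Rho}_{\alpha\beta}$, $(\wh{\Rho}_{\alpha\bar{\beta}})_\circ$, $\wh{\Rho}$, $\wh{\Rho}_{0}{}^{0}$, $\wh{\Rho}_{\alpha 0}$, and noting that the part (3) hypothesis is equivalent to the pure-radiation equation \eqref{eq:EFE} --- is accurate and merely makes explicit what the paper leaves implicit.
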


\begin{proof}
	Part \ref{item:Ricci-deg1} follow from Steps 1,2 and 3 of Section \ref{sec:Computations}. Part \ref{item:Ricci-deg2} from Steps 1 to 6, while  including Steps 7 and 8 as well proves part \ref{item:Ricci-deg3}.
\end{proof}

Combining part \ref{item:Ricci-deg3} of Theorem \ref{thm:Ricci-deg} with Theorem \ref{thm:main_int} now gives Theorem \ref{thm:main-Einstein} where we have written
$\| \ul{\Nh} \|^2_{\ul{h}} = \ul{\Nh}_{\alpha \beta \gamma} \ul{\Nh}^{\alpha \beta \gamma}$.

\begin{rem}
	To construct examples illustrating  Theorems \ref{thm:main-Einstein} and \ref{thm:Ricci-deg}, it suffices to choose a known almost K\"{a}hler--Einstein manifold $(\ut{\mc{M}},\ut{h},\ut{J})$ as discussed at the end of Section \ref{sec:almost_Kahler}, extend $(\ut{h},\ut{J})$ to an almost CR--Einstein structure $(\ul{H},\ul{J},\ul{\theta}^0)$ on a rank-$1$ associated bundle $\ul{\mc{M}}$ as in  Proposition \ref{prop:aK2aCR}, which we then lift to $\mc{M} = \left(-\frac{\pi}{2} , \frac{\pi}{2}\right) \times \ul{\mc{M}}$ as an almost Robinson structure $(N,K)$ for the Einstein metric $\wh{g}$ as in Theorem \ref{thm:main-Einstein}.
\end{rem}

Under the assumptions of Theorem \ref{thm:main-Einstein}, the covariant derivatives of the coframe $1$-forms \eqref{eq:cov_der_frame} with respect to the Levi-Civita connection $\nabla$ of $g$ reduce to
\begin{subequations}\label{eq:Dform_E}
	\begin{align}
	\nabla \kappa & = 2 \, \ii  \ul{h}_{\alpha \bar{\beta}} \theta{}^{\alpha} \wedge \overline{\theta}{}^{\bar{\beta}} + \frac{1}{2} \dot{\lambda}_{0} \kappa \otimes \kappa  \, ,  \label{eq:kappa} \\
	\nabla \theta^\alpha 
	& = \ul{\nabla} \theta^\alpha  - \ul{\Nh}^{\alpha}{}_{\bar{\beta} \bar{\gamma}} \overline{\theta}{}^{\bar{\gamma}}  \otimes \overline{\theta}{}^{\bar{\beta}} - 2 \, \ii  \lambda  \odot \theta^{\alpha} + \ii  \lambda_0  \kappa \odot \theta^{\alpha}   \, , \label{eq:theta}  \\
	\nabla \lambda 
	& =     \ii  \lambda_0 \ul{h}_{\alpha \bar{\beta}} \theta{}^{\alpha} \wedge \overline{\theta}{}^{\bar{\beta}} -  \frac{1}{2} \dot{\lambda}_{0}  \kappa \otimes \lambda  \, . \label{eq:lambda}
	\end{align}
\end{subequations}
The covariant derivative with respect to the Levi-Civita connection $\wh{\nabla}$ of $\wh{g}$ can easily be obtained using formula \eqref{eq-conf-tr-form} with $\Upsilon = \tan \phi \left( \lambda - \frac{1}{2} \lambda_0 \kappa \right)$. For instance, one can easily verify that the expansion of $k = g^{-1}(\kappa,\cdot)$ with respect to $\wh{g}$ is given by $\wh{\epsilon} =2m \tan \phi$.

The non-vanishing components of the Weyl tensor, with reference to Appendix \ref{app:computations}, are given by
\begin{subequations}\label{eq:Weyl-Ein} 
	\begin{align}
	W_{\gamma}{}^{0}{}_{\alpha \beta} & =  2 \, \ii \ul{\Nh}_{\alpha \beta \gamma} \, , \label{eq:Wk1} \\
	W_{\gamma \delta \alpha \beta} & = 2 \,  \ul{\nabla}_{[\gamma|} \ul{\Nh}_{\alpha \beta |\delta]} \, , \\
	W_{\gamma \delta \bar{\beta} \alpha} & = \ul{\nabla}_{\bar{\beta}} \ul{\Nh}_{\gamma \delta \alpha}  \, , \\
	W_{\gamma}{}^{\delta \beta}{}_{\alpha}   
	& =  \ul{R}_{\gamma}{}^{\delta}{}_{\alpha}{}^{\beta}   + \ul{\Nh}^{\epsilon \beta \delta}  \ul{\Nh}_{\epsilon \alpha \gamma} -   \lambda_0 \delta_{\alpha}^{\beta}  \delta^{\delta}_{\gamma}  
	-  \lambda_0 \delta_{\gamma}^{\delta} \delta_{\alpha}^{\beta}  
	- \frac{1}{2}\lambda_0 \delta_{\gamma}^{\beta} \delta_{\alpha}^{\delta}  
	- \frac{1}{2} \lambda_0 \delta_{\alpha}^{\delta} \delta^{\beta}_{\gamma} \nonumber \\
	& \qquad 
	+ \frac{1}{2m(2m+1)}\left( \ddot{\lambda}\,_{0} + 2 (3m+2) \lambda_0 - 2 (m + 1)  \ul{\Lambda}  \right)  \delta_{\alpha}^{\delta} 
	\delta_{\gamma}^{\beta} 
	\, , \\
	W_{\bar{\beta} 0}{}^{0}\,_{\alpha} 
	& = \left( \frac{1}{2}  \ii \dot{\lambda}_{0} 
	+  \frac{1}{2m(2m+1)} \left( \frac{2m-1}{2}
	\ddot{\lambda}\,_{0}   
	-  (2m+2) \lambda_0 
	+   \ul{\Lambda} \right)  \right) \ul{h}_{\alpha \bar{\beta}} \, , \label{eq:Wk2}  \\
	W_{0}{}^{0 0}\,_{0} 
	& = \frac{1}{2m+1} \left( \frac{2m-1}{2}
	\ddot{\lambda}\,_{0}   
	-  (2m+2) \lambda_0 
	+   \ul{\Lambda} \right) \, , \label{eq:Wk3}  \\
	W_{\gamma 0 \alpha \beta}  & =   \ii  \lambda_0  \ul{\Nh}_{\alpha \beta \gamma}  \, , \label{eq:W-b1}
	\end{align}
\end{subequations}
from which we can obtain non-zero expressions for $W^{0}{}_{0}{}^{\alpha}{}_{\beta}$ and  $W_{\alpha \gamma}{}^{\delta \beta}$ using the Bianchi identities. When the almost Robinson structure is integrable, i.e.\ $\ul{\Nh}_{\alpha \beta \gamma} = 0$, the Weyl tensor satisfies the `Goldberg--Sachs'-type curvature condition put forward in \cite{Taghavi-Chabert2012}:
\begin{align*}
W ( v, w, u, \cdot) & = 0 \, , & \mbox{for all $v, w, u \in \Gamma(N)$.}
\end{align*}
A more detailed study of the curvature is given in \cite{Taghavi-Chabert2014}. For the time being, we draw the following conclusion:

\begin{prop}\label{prop:conf_flat_Rob}
	Let $(\mc{M}, \mbf{c}, K)$ be the optical geometry of Theorem \ref{thm:main-Einstein}. The following statements are equivalent:
	\begin{enumerate}
		\item $(\mc{M}, \mbf{c}, K)$ is (locally) conformally flat;
		\item the almost Robinson structure $(N,K)$ is integrable, the almost pseudo-Hermi-tian structure $(\ul{H},\ul{J},\ul{\theta}^0)$ flat, and the metric $\wh{g}$ flat (and in particular Ricci-flat).
	\end{enumerate}
\end{prop}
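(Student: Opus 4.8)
The plan is to dispatch $(2) \Rightarrow (1)$ in one line and to extract $(1) \Rightarrow (2)$ by reading off the explicit curvature of the adapted metric. For the easy direction: a flat metric is a fortiori conformally flat, and since $\wh g$ lies in $\mbf c$ the whole conformal class is flat, so $(\mc M, \mbf c, K)$ is locally conformally flat. (The other two clauses of (2) follow from the analysis below, but are not needed for this implication.)

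For $(1) \Rightarrow (2)$ I would assume $W \equiv 0$ and use the components \eqref{eq:Weyl-Ein} of the Weyl tensor of the adapted metric $g$ of \eqref{eq:g_can}. First, \eqref{eq:Wk1} gives $W_\gamma{}^{0}{}_{\alpha\beta} = 2\ii\,\ul\Nh_{\alpha\beta\gamma} = 0$, so $\ul\Nh_{\alpha\beta\gamma} = 0$ and, by Theorem \ref{thm:main_int}, both $(N,K)$ and $(\ul H,\ul J)$ are integrable. With $\ul\Nh_{\alpha\beta\gamma} = 0$ the totally tracefree totally symmetric part of $W_\gamma{}^{\delta\beta}{}_\alpha$ reduces (cf. \eqref{eq:W-S-N2} and Theorem \ref{thm:main-pre-flatness}) to the Chern--Moser tensor $\ul S_{\alpha\bar\gamma\beta\bar\delta}$, which therefore also vanishes; since $(\ul H,\ul J)$ is moreover CR--Einstein by Theorem \ref{thm:main-Einstein}, the triple $(\ul H,\ul J,\ul\theta^0)$ is flat in the sense of (2) (equivalently one may invoke Theorem \ref{thm:main-flatness} directly).

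Next I would pin down the parameters. Equation \eqref{eq:Wk3} forces $\tfrac{2m-1}{2}\ddot\lambda_0 - (2m+2)\lambda_0 + \ul\Lambda = 0$; substituting this into \eqref{eq:Wk2} leaves $W_{\bar\beta 0}{}^{0}{}_\alpha = \tfrac{\ii}{2}\dot\lambda_0\,\ul h_{\alpha\bar\beta}$, hence $\dot\lambda_0 = 0$ and $\lambda_0 \equiv \tfrac{\ul\Lambda}{2m+2}$. Comparing with the explicit $\phi$-profile of $\lambda_0$ in Theorem \ref{thm:main-Einstein}, its $\phi$-dependent part is a combination of the non-constant functions $\sum_j a_j\cos^{2j}\phi - 2a_m\cos^{2m+2}\phi$ (even in $\phi$) and $\cos^{2m+1}\phi\sin\phi$ (odd in $\phi$), whose derivatives are linearly independent for $m>1$; so $\dot\lambda_0 = 0$ forces $\ul c = 0$ and $\tfrac{\Lambda}{2m+1} = \tfrac{\ul\Lambda}{2m+2}$, i.e. $\Lambda = \tfrac{2m+1}{2m+2}\ul\Lambda$.

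The hard part is the last step, showing $\wh g$ is actually flat: everything up to here is a direct reading of \eqref{eq:Weyl-Ein}, but the components that detect flatness are the Bianchi-derived ones. I would feed the values $\ul\Nh_{\alpha\beta\gamma} = 0$, $\ul S_{\alpha\bar\gamma\beta\bar\delta} = 0$, $\ul c = 0$, $\lambda_0 \equiv \tfrac{\ul\Lambda}{2m+2}$, $\Lambda = \tfrac{2m+1}{2m+2}\ul\Lambda$ into the full curvature of $g$ computed in Appendix \ref{app:computations} — in particular into the components $W^{0}{}_{0}{}^{\alpha}{}_\beta$ and $W_{\alpha\gamma}{}^{\delta\beta}$ recovered from the first Bianchi identity (using also $\wh\nabla^{a}W_{abcd} = 0$, valid because $\wh g$ is Einstein) — and show that $\wh W = 0$ then forces $\ul\Lambda = 0$, hence $\Lambda = 0$. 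Since $\wh g$ is then Einstein with vanishing Ricci scalar and vanishing Weyl tensor, it is flat, and a flat metric is in particular Ricci-flat; this closes the equivalence.
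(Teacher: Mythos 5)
Your reduction up to the parameter constraints is fine and matches the paper's route: $(2)\Rightarrow(1)$ is immediate, and from \eqref{eq:Wk1}, \eqref{eq:Wk2}, \eqref{eq:Wk3} together with Theorems \ref{thm:main_int} and \ref{thm:main-flatness} you correctly obtain $\ul{\Nh}_{\alpha\beta\gamma}=0$, $\ul{S}_{\alpha\bar{\gamma}\beta\bar{\delta}}=0$, $\dot{\lambda}_0=0$, $\ul{c}=0$ and $(2m+2)\Lambda=(2m+1)\ul{\Lambda}$, i.e.\ $\lambda_0\equiv\ul{\Lambda}/(2m+2)$.

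The gap is precisely the step you flag as ``the hard part'': you never show that conformal flatness forces $\ul{\Lambda}=0$ (equivalently $\lambda_0\equiv0$, hence Ricci-flatness and flatness of $\wh{g}$); you only announce a plan, and the plan cannot work as described. The components $W^{0}{}_{0}{}^{\alpha}{}_{\beta}$ and $W_{\alpha\gamma}{}^{\delta\beta}$ are obtained from the components listed in \eqref{eq:Weyl-Ein} by the first Bianchi identity, so they are algebraic consequences of those components; but once $\ul{\Nh}_{\alpha\beta\gamma}=0$, $\ul{S}_{\alpha\bar{\gamma}\beta\bar{\delta}}=0$ and $\lambda_0$ is the constant $\ul{\Lambda}/(2m+2)$, every component in \eqref{eq:Weyl-Ein} vanishes identically -- for instance, in $W_{\gamma}{}^{\delta\beta}{}_{\alpha}$ the CR--Einstein condition gives $\ul{R}_{\gamma}{}^{\delta}{}_{\alpha}{}^{\beta}=2\lambda_0\bigl(\delta_{\gamma}^{\delta}\delta_{\alpha}^{\beta}+\delta_{\gamma}^{\beta}\delta_{\alpha}^{\delta}\bigr)$ and the remaining terms cancel for any value of $\ul{\Lambda}$ -- so the Bianchi-derived components impose no new constraint on $\ul{\Lambda}$, and invoking $\wh{\nabla}^{a}W_{abcd}=0$ adds nothing since it is automatic once $W=0$. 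The paper closes this step by a different input: it reads $\lambda_0=0$ off the component \eqref{eq:W-b1}, $W_{\gamma 0\alpha\beta}=\ii\lambda_0\ul{\Nh}_{\alpha\beta\gamma}$, and from $\lambda_0=0$ concludes $\Lambda=\ul{\Lambda}=\ul{c}=0$, Ricci-flatness, and flatness of $(\ul{H},\ul{J},\ul{\theta}^0)$. That is exactly the point your sketch must engage with -- and it is genuinely delicate, since your own intermediate conclusion $\ul{\Nh}_{\alpha\beta\gamma}=0$ makes the curvature components you name (and \eqref{eq:W-b1} itself) vanish identically -- so as written your proposal does not establish the implication $(1)\Rightarrow(2)$ in the part asserting flatness of $(\ul{H},\ul{J},\ul{\theta}^0)$ and of $\wh{g}$.
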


\begin{proof}
	The equivalence follows essentially from equations \eqref{eq:Weyl-Ein}. See Theorem \ref{thm:main_int} for the integrability of $(N,K)$, and Theorem \ref{thm:main-flatness} for CR flatness. Now, equation \eqref{eq:W-b1} tells us that $\lambda_0=0$, which means that $\Lambda = \ul{\Lambda} = \ul{c} = 0$. But this is equivalent to $\wh{g}$ being Ricci-flat. In addition, all the pseudo-Hermitian torsion and curvature invariants of $\ul{\theta}^0$ vanish, i.e.\ $(\ul{H},\ul{J},\ul{\theta}^0)$ is flat.
\end{proof}

\section{Special cases}\label{sec:spec}
\subsection{Fefferman--Einstein metrics: $(2m+2) \Lambda = (2m+1) \ul{\Lambda}$ and $\ul{c}=0$}\label{sec:Feff-Einstein}
To any contact almost CR manifold $(\ul{\mc{M}}, \ul{H}, \ul{J})$ of positive definite signature, one can associate a canonical Lorentzian conformal manifold $(\mc{M},\mbf{c}_F)$ on the total space of a circle bundle associated to the canonical bundle of $(\ul{\mc{M}}, \ul{H}, \ul{J})$. The construction was originally due to Fefferman \cite{Fefferman1976,Fefferman1976a}, and  $(\mc{M},\mbf{c}_F)$ has since been known as the \emph{Fefferman space} of $(\ul{\mc{M}}, \ul{H}, \ul{J})$ -- see also \cite{Lee1986}. Leitner subsequently generalised Fefferman's construction to partially integrable contact almost CR structures and allowed the inclusion of an additional gauge field  \cite{Leitner2010}. Regardless of the integrability and the presence of a gauge field, the resulting conformal structure admits a null conformal Killing field, which generates a maximally twisting non-shearing congruence of null geodesics $\mc{K}$ having the almost CR manifold $(\ul{\mc{M}}, \ul{H}, \ul{J})$ as its (local) leaf space, and thus a twist-induced almost Robinson structure. Fefferman spaces thus fall into the class of geometries considered in the present article. In the integrable case, a characterisation of conformal structures as Fefferman spaces was presented in \cite{Graham1987,Cap2010}. The non-integrable case will be given in \cite{Taghavi-Chabert}.

A \emph{Fefferman--Einstein metric} is a metric $g_{FE}$ in $\mbf{c}_F$ that is Einstein, at least on some open subset of $\mc{M}$ \cite{Leitner2007} -- in the terminology of \cite{Gover2005a,Cap2008}, such a metric arises from an \emph{almost Einstein structure}. We shall refer to $(\mc{M}, g_{FE})$ as a \emph{Fefferman--Einstein space}. In this definition, we shall also include Einstein metrics in gauged Fefferman conformal structures over partially integrable contact almost CR manifolds.

We now show that for some values of the parameters of the family of metrics found in Theorem \ref{thm:main-Einstein}, the resulting metric is a Fefferman--Einstein metric.  The proof of the implication  \eqref{item:1} $\Rightarrow$ \eqref{item:5} is only given in the integrable case, but can also be obtained in the non-integrable case by appealing to the results of \cite{Taghavi-Chabert}.

\begin{thm}\label{thm:main-Feff-Einstein}
	Let $(\mc{M}, \mbf{c}, K)$ be the optical geometry of Theorem \ref{thm:main-Einstein}. The following statements are equivalent:
	\begin{enumerate}
		\item $k= g^{-1}(\kappa,\cdot)$ is a conformal Killing field; \label{item:1}
		\item $\ell= g^{-1}(\lambda,\cdot)$ is a conformal Killing field; \label{item:2}
		\item $\lambda_0$ is constant; \label{item:3}
		\item the parameters satisfy $(2m+2) \Lambda = (2m+1) \ul{\Lambda}$ and $\ul{c}=0$; \label{item:4}
		\item $(\mc{M},\wh{g})$ is locally isometric to a Fefferman--Einstein space. \label{item:5}
	\end{enumerate}
\end{thm}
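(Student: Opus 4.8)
The plan is to prove the cycle of implications \eqref{item:3} $\Rightarrow$ \eqref{item:4} $\Rightarrow$ \eqref{item:2} $\Rightarrow$ \eqref{item:1} $\Rightarrow$ \eqref{item:3} (so that all four are equivalent), and then prove \eqref{item:3} $\Leftrightarrow$ \eqref{item:5} separately. First I would observe that $\lambda_0$ as determined in Theorem \ref{thm:main-Einstein} is a function of $\phi$ alone whose explicit form is
\begin{align*}
\lambda_0 = \frac{\ul{\Lambda}}{2m+2} + \left( \frac{\Lambda}{2m+1} - \frac{\ul{\Lambda}}{2m+2} \right)\left( \sum_{j=0}^{m} a_j \cos^{2j}\phi - 2 a_m \cos^{2m+2}\phi \right) + \ul{c} \cos^{2m+1}\phi \sin\phi \, .
\end{align*}
Since the functions $1$, $\sum_{j=0}^m a_j \cos^{2j}\phi - 2 a_m \cos^{2m+2}\phi$ (which is non-constant because $a_0 = 1 \neq 0$ and the $\cos^{2}\phi$ coefficient $a_1 = \tfrac{2m+2}{2m-1}a_0 \neq 0$) and $\cos^{2m+1}\phi\sin\phi$ are linearly independent on $\left(-\tfrac{\pi}{2},\tfrac{\pi}{2}\right)$, the derivative $\dot\lambda_0$ vanishes identically precisely when the coefficient $\tfrac{\Lambda}{2m+1} - \tfrac{\ul{\Lambda}}{2m+2}$ vanishes and $\ul{c} = 0$; this is exactly condition \eqref{item:4}, and when these hold $\lambda_0 = \tfrac{\ul{\Lambda}}{2m+2}$ is constant. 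Thus \eqref{item:3} $\Leftrightarrow$ \eqref{item:4}, and in both cases $\dot\lambda_0 = \ddot\lambda_0 = 0$.

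Next I would establish the Killing statements. Using the reduced covariant-derivative formulae \eqref{eq:Dform_E} for $\nabla$ applied to the metric $g = 2\,\kappa\odot\lambda + h$, one computes $\mathsterling_k g$ and $\mathsterling_\ell g$ directly; since $\dot\lambda_0 = 0$ under \eqref{item:3}, equations \eqref{eq:kappa}, \eqref{eq:theta} and \eqref{eq:lambda} show $\nabla\kappa$, $\nabla\theta^\alpha$, $\nabla\lambda$ lose their $\kappa\otimes\kappa$, $\kappa\odot\theta^\alpha$, $\kappa\otimes\lambda$ terms respectively, and a short computation gives $\mathsterling_k g = 0$ and $\mathsterling_\ell g \propto g$ — in fact $\ell$ is homothetic for $g$ but conformal Killing for the whole conformal class $\mbf{c}$, so \eqref{item:3} $\Rightarrow$ \eqref{item:1} and \eqref{item:3} $\Rightarrow$ \eqref{item:2}. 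Conversely, if $k = g^{-1}(\kappa,\cdot)$ is conformal Killing, then since $\mathsterling_k h = 0$ and $\mathsterling_k\kappa = 0$ automatically, the conformal Killing equation forces $\mathsterling_k\lambda$ to be a multiple of $\kappa$; but $\mathsterling_k\lambda = \dot\lambda_0\,\varpi^*\ul{\theta}^0 = \tfrac12\dot\lambda_0\,\kappa$ from $\lambda = \dd\phi + \lambda_0\varpi^*\ul{\theta}^0$, so this is automatic and one must use the trace-free part: evaluating $(\mathsterling_k g)_{ab}$ on $\ell,\ell$ gives $2\,\mathsterling_k\lambda(\ell) = \dot\lambda_0$, which must equal the conformal factor times $g(\ell,\ell) = 0$, hence $\dot\lambda_0 = 0$. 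The same argument applied to $\ell$ gives \eqref{item:2} $\Rightarrow$ \eqref{item:3}. This closes the equivalence of \eqref{item:1}--\eqref{item:4}.

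Finally, for \eqref{item:3} $\Leftrightarrow$ \eqref{item:5} in the integrable case: when $\lambda_0$ is the constant $c_0 := \tfrac{\ul{\Lambda}}{2m+2}$, the metric becomes $g = 2\,\kappa\odot(\dd\phi + c_0\,\varpi^*\ul{\theta}^0) + \varpi^* \ul{h}$ with $\kappa = 2\varpi^*\ul{\theta}^0$, which is manifestly (a conformal rescaling of) the Fefferman metric of $(\ul{\mc{M}},\ul{H},\ul{J})$ built from the distinguished contact form $\ul{\theta}^0$ and the canonical Webster connection: the conformal Killing field $k = \partial_\phi$ generates the $S^1$-fibres and the CR--Einstein condition on $\ul{\theta}^0$ (established in Theorem \ref{thm:main-Einstein}) is exactly what makes $\wh{g} = \sec^2\phi\, g$ Einstein. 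I would cite the characterisation of Fefferman--Einstein metrics in \cite{Leitner2007,Graham1987,Cap2010} to identify $\wh{g}$ with a Fefferman--Einstein metric, noting the relation $(2m+2)\Lambda = (2m+1)\ul{\Lambda}$ recorded in \eqref{item:4} as the compatibility between the bulk cosmological constant and the Webster--Einstein constant. Conversely, on a Fefferman--Einstein space the defining null conformal Killing field of the Fefferman construction coincides with $k$ up to scale, so $\lambda_0$ is forced to be constant. The non-integrable direction \eqref{item:1} $\Rightarrow$ \eqref{item:5} is deferred to \cite{Taghavi-Chabert} as indicated.

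\textbf{Main obstacle.} The delicate point is the converse direction \eqref{item:1} $\Rightarrow$ \eqref{item:3}: one must extract from the conformal Killing equation $\mathsterling_k g = 2\rho\, g$ the single scalar condition $\dot\lambda_0 = 0$ without circular reasoning, keeping careful track of which components of $\mathsterling_k g$ vanish identically for structural reasons (those along $h$ and $\kappa$) versus which genuinely constrain $\lambda_0$. The cleanest route is to note that $k$ is null and geodesic, so $\rho = \tfrac1{n+2}\dv_g k$, and then compare $(\mathsterling_k g)(\ell,\ell)$, which is pure $\dot\lambda_0$, against $2\rho\, g(\ell,\ell) = 0$; getting the normalisation of $\mathsterling_k\lambda$ right via $\lambda = \dd\phi + \lambda_0\varpi^*\ul{\theta}^0$ and $k = \partial_\phi$ is the one calculation that must be done with care.
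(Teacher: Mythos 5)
Your handling of the equivalence of items \eqref{item:1}--\eqref{item:4} is correct and is essentially the paper's own argument (the paper settles it ``by inspection'' of \eqref{eq:kappa}, \eqref{eq:lambda} and the explicit formula for $\lambda_0$): with $\dot{\lambda}_0=0$ one gets $\mathsterling_k g=\dot{\lambda}_0\,\kappa\otimes\kappa=0$ and $\mathsterling_\ell g=-\dot{\lambda}_0\,\kappa\odot\lambda=0$, and conversely the conformal Killing equation kills $\dot{\lambda}_0$, while your linear-independence analysis of the $\phi$-dependence of $\lambda_0$ gives \eqref{item:3}$\Leftrightarrow$\eqref{item:4}. (Minor slips that do not affect this part: the term $\ii\lambda_0\,\kappa\odot\theta^{\alpha}$ in \eqref{eq:theta} does not disappear when $\lambda_0$ is a nonzero constant, though it is never needed; $\ell$ comes out genuinely Killing for $g$, not merely homothetic; and for \eqref{item:2}$\Rightarrow$\eqref{item:3} you must test $\mathsterling_\ell g=-\dot\lambda_0\,\kappa\odot\lambda$ against a pair such as $(k,\ell)$ together with an $\ul{H}$-direction, since evaluating on $(\ell,\ell)$ gives $0=0$.)

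The genuine gap is in \eqref{item:3}$\Rightarrow$\eqref{item:5}. The metric $g=4\,\ul{\theta}^0\odot(\dd\phi+\lambda_0\,\ul{\theta}^0)+\ul{h}$ with $\lambda_0$ constant is \emph{not} ``manifestly'' the Fefferman metric: the Fefferman $1$-form on the canonical circle bundle contains the trace of the Webster connection form and a specific multiple of the Webster--Schouten scalar times $\ul{\theta}^0$, neither of which is visible in $\lambda=\dd\phi+\lambda_0\,\ul{\theta}^0$. Identifying the two requires using the CR--Einstein condition to show that the discrepancy $1$-form is closed, hence locally absorbable into the fibre coordinate, and checking that the constant $\lambda_0=\ul{\Lambda}/(2m+2)$ is precisely the value dictated by the Webster--Schouten term -- this is not a formality, since for any other constant value of $\lambda_0$ the metric is a nontrivial Kerr--Schild modification of $g_{FE}$ and does \emph{not} lie in the Fefferman conformal class (compare the corollary following this theorem). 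Your sketch asserts the identification and defers to ``the characterisation of Fefferman--Einstein metrics'' without verifying its hypotheses, but that verification is exactly where the content of the implication lies. The paper's proof of \eqref{item:1}$\Rightarrow$\eqref{item:5} consists of checking the criteria of Corollary 3.1 of \cite{Cap2010} (conditions \eqref{eq:Feff_char}): the Weyl condition $k^aW_{abcd}=0$, which uses the computed components \eqref{eq:Wk1}--\eqref{eq:Wk3} together with the constancy of $\lambda_0$ and integrability of the CR structure; the Cotton condition, which holds because $\wh{g}$ is Einstein; and the strict inequality \eqref{eq:Feff_char_Rho}, evaluated to $-1$ using $\nabla_ak^a=0$ and \eqref{eq:Ric00}. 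Either route -- the direct matching with the Fefferman formula in the spirit of \cite{Lee1986,Leitner2007}, or the conformal characterisation of \cite{Graham1987,Cap2010} -- is legitimate, but some such quantitative check must be carried out and is missing from your proposal; the converse \eqref{item:5}$\Rightarrow$\eqref{item:1} you leave at the same citation level as the paper, which is acceptable.
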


\begin{proof}
	The equivalence of statements \eqref{item:1},  \eqref{item:2}, \eqref{item:3} and \eqref{item:4} is immediate from inspection of equations \eqref{eq:kappa}, \eqref{eq:lambda} and the definition of $\lambda_0$ in Theorem \ref{thm:main-Einstein}. That \eqref{item:5} implies \eqref{item:1} follows from \cite{Graham1987,Cap2010,Leitner2010}. To show that condition \eqref{item:1} implies \eqref{item:5}, we assume that the underlying almost CR structure is integrable, and we can check the criteria for the characterisation of the Fefferman space given in Corollary 3.1 of \cite{Cap2010} -- see also \cite{Graham1987}: under the assumption that $k$ is a null conformal Killing field, we must have, for any metric $g$ in $\mbf{c}$,
	\begin{subequations}\label{eq:Feff_char}
		\begin{align}
		& k^a W_{a b c d} = 0 \, , \label{eq:Feff_char_W} \\
		& k^c Y_{a b c} = 0 \, , \label{eq:Feff_char_Y}  \\
		& \frac{1}{(2m+2)^2} (\nabla_a k^a)^2 - k^a k^b \Rho_{a b} - \frac{1}{2m+2} k^a \nabla_a \nabla_b k^b < 0 \, . \label{eq:Feff_char_Rho} 
		\end{align}
	\end{subequations}
	Here $Y_{a b c} := 2 \nabla_{[b} \Rho_{c] a}$ is the Cotton tensor of $g$. The set of conditions \eqref{eq:Feff_char} are conformally invariant. Condition \eqref{eq:Feff_char_W} is satisfied by \eqref{eq:Wk1}, \eqref{eq:Wk2} and \eqref{eq:Wk3}. Since $\mbf{c}$ contains an Einstein metric $\wh{g}$, we have $\wh{Y}_{a b c}=0$, so condition \eqref{eq:Feff_char_Y} holds. Finally, since for any metric $g$ in $\accentset{n.e.}{\mbf{c}}$, $\nabla_a k^a =0$ and $k^a k^b \Rho_{a b} =1$ by equation \eqref{eq:Ric00}, we have that the LHS of equation \eqref{eq:Feff_char_Rho} can be computed to be $-1$, which is indeed less than zero. This establishes \eqref{item:5}.
\end{proof}

Note that for a Fefferman--Einstein metric, $\Lambda$ is determined by $\ul{\Lambda}$. As a direct consequence of Theorem \ref{thm:main-Feff-Einstein}, we obtain the following two corollaries.

\begin{cor}\label{cor:Ricci-flat-Feff-Einstein}
	Let $(\mc{M}, \mbf{c}, K)$ be the optical geometry of Theorem \ref{thm:main-Einstein}. Then $\wh{g}$ is a Ricci-flat Fefferman--Einstein metric if and only if $\lambda_0 = 0$, i.e.\ $\Lambda = \ul{\Lambda} = \ul{c} =0$.
\end{cor}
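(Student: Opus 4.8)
The plan is to obtain this as an immediate consequence of Theorem \ref{thm:main-Einstein} and Theorem \ref{thm:main-Feff-Einstein}, with no fresh computation. First I would record the two elementary facts I will lean on: by Theorem \ref{thm:main-Einstein} the metric $\wh{g}$ is Einstein with Ricci scalar $(2m+2)\Lambda$, so $\wh{g}$ is Ricci-flat if and only if $\Lambda = 0$; and by Theorem \ref{thm:main-Feff-Einstein} (the chain \eqref{item:3} $\Leftrightarrow$ \eqref{item:4} $\Leftrightarrow$ \eqref{item:5}) $(\mc{M},\wh{g})$ is locally isometric to a Fefferman--Einstein space exactly when $\lambda_0$ is constant, which in turn holds exactly when $(2m+2)\Lambda = (2m+1)\ul{\Lambda}$ and $\ul{c}=0$. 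So the statement to prove splits into the equivalences ``Ricci-flat Fefferman--Einstein'' $\Leftrightarrow$ $\lambda_0\equiv 0$ $\Leftrightarrow$ $\Lambda=\ul{\Lambda}=\ul{c}=0$.

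For the forward direction, suppose $\wh{g}$ is a Ricci-flat Fefferman--Einstein metric. Then $\Lambda = 0$ from the first fact, while the Fefferman--Einstein property gives $(2m+2)\Lambda = (2m+1)\ul{\Lambda}$ and $\ul{c}=0$, whence $\ul{\Lambda}=0$ as well. Substituting $\Lambda=\ul{\Lambda}=\ul{c}=0$ into the explicit expression for $\lambda_0$ in the statement of Theorem \ref{thm:main-Einstein} makes every term vanish, so $\lambda_0=0$. For the converse, assume $\lambda_0\equiv 0$; then $\lambda_0$ is constant, so by Theorem \ref{thm:main-Feff-Einstein} $\wh{g}$ is locally isometric to a Fefferman--Einstein metric with $\ul{c}=0$ and $(2m+2)\Lambda=(2m+1)\ul{\Lambda}$. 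Under these two relations the bracketed coefficient $\tfrac{\Lambda}{2m+1}-\tfrac{\ul{\Lambda}}{2m+2}$ vanishes and the $\ul{c}$-term drops, so the formula for $\lambda_0$ collapses to the constant $\tfrac{\ul{\Lambda}}{2m+2}$; since $\lambda_0=0$ this forces $\ul{\Lambda}=0$, hence $\Lambda=0$, hence $\wh{g}$ is Ricci-flat, and we recover $\Lambda=\ul{\Lambda}=\ul{c}=0$.

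I do not expect a genuine obstacle: the only point deserving a line of care is that the polynomial $\sum_{j=0}^{m}a_j\cos^{2j}\phi-2a_m\cos^{2m+2}\phi$ appearing in $\lambda_0$ is non-constant in $\phi$ (its $\phi$-independent part is $a_0=1$), so a constant $\lambda_0$ does force the vanishing of its coefficient — but this is precisely what is already encoded in the equivalence \eqref{item:3} $\Leftrightarrow$ \eqref{item:4} of Theorem \ref{thm:main-Feff-Einstein}, so the corollary needs nothing beyond bookkeeping. Corollary \ref{cor:Conf-flat} is not needed here; the argument is self-contained given the two cited theorems.
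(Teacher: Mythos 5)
Your proposal is correct and matches the paper's route: the paper presents this corollary as a direct consequence of Theorem \ref{thm:main-Feff-Einstein} combined with the Einstein condition and the explicit formula for $\lambda_0$ from Theorem \ref{thm:main-Einstein}, which is exactly the bookkeeping you carry out. No gaps.
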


\begin{cor}
	Let $(\mc{M}, \mbf{c}, K)$ be the optical geometry given in Theorem \ref{thm:main-Einstein}. Then, the Einstein metric $\wh{g}$ can be cast into the Kerr-Schild form
	\begin{align*}
	\wh{g} & = g_{FE} +  \sec^2 \phi \left( \lambda_0 - \frac{1}{2m+2} \ul{\Lambda} \right) \kappa \otimes \kappa \, , & \mbox{for $- \frac{\pi}{2} < \phi < \frac{\pi}{2}$,} 
	\end{align*}
	where $g_{FE}$ is the Fefferman--Einstein metric associated to the underlying almost CR--Einstein structure $(\ul{H},\ul{J},\ul{\theta}^0)$.
\end{cor}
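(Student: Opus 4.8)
The plan is to read the Kerr--Schild decomposition straight off the canonical form of the two metrics, which are both produced from the \emph{same} almost CR--Einstein data $(\ul{H},\ul{J},\ul{\theta}^0)$ by the construction of Section~\ref{sec:Computations}. First I would recall from Theorem~\ref{thm:main-Einstein} that, in the coordinates adapted to $(\ul{H},\ul{J},\ul{\theta}^0)$, the Einstein metric is $\wh{g} = \sec^2\phi\,(2\,\kappa\odot\lambda + h)$ with $\lambda = \dd\phi + \tfrac12\lambda_0\,\kappa$, $\kappa = 2\,\varpi^*\ul{\theta}^0$ and $h = \varpi^*\ul{h}$. Since $\kappa$ is a $1$-form, $\kappa\odot\kappa = \kappa\otimes\kappa$, so $2\,\kappa\odot\lambda = 2\,\kappa\odot\dd\phi + \lambda_0\,\kappa\otimes\kappa$, and hence
\[
\wh{g} = \sec^2\phi\,\bigl(2\,\kappa\odot\dd\phi + h\bigr) + \sec^2\phi\,\lambda_0\,\kappa\otimes\kappa .
\]
The key point here is that $\phi$, $\kappa$ and $h$ are fixed by the CR--Einstein data alone; only $\lambda_0$ varies with the parameters $\Lambda$, $\ul{\Lambda}$, $\ul{c}$.

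Next I would pin down $g_{FE}$ inside this family. By Theorem~\ref{thm:main-Feff-Einstein}, the metric built from $(\ul{H},\ul{J},\ul{\theta}^0)$ is (locally isometric to) the Fefferman--Einstein metric exactly when $(2m+2)\Lambda = (2m+1)\ul{\Lambda}$ and $\ul{c}=0$; plugging these values into the expression for $\lambda_0$ in Theorem~\ref{thm:main-Einstein} kills both the $\bigl(\tfrac{\Lambda}{2m+1}-\tfrac{\ul{\Lambda}}{2m+2}\bigr)$-factor and the $\ul{c}$-term, leaving the constant $\lambda_0 = \tfrac{1}{2m+2}\ul{\Lambda}$. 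Therefore
\[
g_{FE} = \sec^2\phi\,\bigl(2\,\kappa\odot\dd\phi + h\bigr) + \sec^2\phi\,\tfrac{1}{2m+2}\ul{\Lambda}\,\kappa\otimes\kappa ,
\]
with the \emph{same} $\phi$, $\kappa$, $h$ as above. Subtracting the two displays, the common $2\,\kappa\odot\dd\phi + h$ part cancels and one is left with
\[
\wh{g} - g_{FE} = \sec^2\phi\Bigl(\lambda_0 - \tfrac{1}{2m+2}\ul{\Lambda}\Bigr)\,\kappa\otimes\kappa ,
\]
which is exactly the asserted Kerr--Schild form, $\kappa$ being null for both $\wh{g}$ and $g_{FE}$.

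The one step requiring care is the identification in the second paragraph: one must make sure that the adapted coordinate $\phi$ and projection $\varpi$ — and hence $\kappa$ and $h$ — appearing in the canonical form of $g_{FE}$ are literally those of Theorem~\ref{thm:main-Einstein}, not merely diffeomorphic copies. I expect this to be the only genuine obstacle, and it is resolved by noting that $g_{FE}$ and $\wh{g}$ arise from the same construction applied to the same data $(\ul{H},\ul{J},\ul{\theta}^0)$, differing solely in the choice of the free parameters, so no computation is needed beyond invoking Theorem~\ref{thm:main-Feff-Einstein} and the formula for $\lambda_0$.
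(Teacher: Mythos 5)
Your proposal is correct and matches the paper's (implicit) argument: the corollary is stated there as a direct consequence of Theorem \ref{thm:main-Feff-Einstein}, i.e.\ one identifies $g_{FE}$ with the member of the family in Theorem \ref{thm:main-Einstein} having $(2m+2)\Lambda=(2m+1)\ul{\Lambda}$ and $\ul{c}=0$, for which $\lambda_0$ reduces to the constant $\tfrac{1}{2m+2}\ul{\Lambda}$, and then subtracts the two canonical forms built from the same $(\ul{H},\ul{J},\ul{\theta}^0)$, exactly as you do. Your expansion $2\,\kappa\odot\lambda = 2\,\kappa\odot\dd\phi + \lambda_0\,\kappa\otimes\kappa$ and the cancellation of the common part are precisely the intended computation, so no further comment is needed.
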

The previous corollary comes as no surprise considering that the conformal factor involved in $\wh{g}$ (see Remark \ref{rem:circle}) and the function $\lambda_0$ depend on periodic functions of period $\pi$, which indeed suggests these Einstein metrics live on a circle bundle.

\subsection{Taub--NUT-type spacetimes: $\ul{\Lambda}\neq0$}\label{sec:Taub-NUT}
Originally discovered in dimension four by Taub \cite{Taub1951}, and independently, by Newman, Unti, Tamburino \cite{Newman1963}, the \emph{Taub--NUT spacetime} is an Einstein Lorentzian manifold of dimension $2m+2$ associated to any $2m$-dimensional K\"{a}hler--Einstein metric \cite{Bais1985,Awad2002,Alekseevsky2021}. We shall presently show that the construction can be generalised to the non-integrable case and is locally isometric to the class of metrics of Theorem \ref{thm:main-Einstein} for which $\ul{\Lambda} \neq 0$.

Let $(\ut{\mc{M}}, \ut{h}, \ut{J})$ be a $2m$-dimensional almost K\"{a}hler--Einstein manifold with \emph{non-zero} Ricci scalar $2m  \ut{\Lambda}$. By Proposition \ref{prop:aK2aCR}, the circle bundle $\ul{\mc{M}} \longrightarrow \ut{\mc{M}}$ defined by \eqref{eq:S1bdl} in Section \ref{sec:almost_Kahler} admits an almost CR--Einstein structure $(\ul{H},\ul{J},\ul{\theta}^0)$. Following our previous conventions, we write $\ul{\Lambda}$ for $\ut{\Lambda}$, and we omit pullback maps for clarity. We can then equip  the radial extension $\mc{M}_{TN} = \R \times \ul{\mc{M}}$ of $\ul{\mc{M}}$, with coordinate $r$ on $\R$, with the Lorentzian metric
\begin{align}\label{eq:T-NUT_metric}
g_{TN} & = \frac{1}{\ul{\Lambda}^2 F(r)} \dd r \otimes \dd r - 4 \, F(r) \ul{\theta}^0 \otimes \ul{\theta}^0 + \frac{r^2 + \ul{\Lambda}^2}{\ul{\Lambda}^2} \ul{h} \, ,
\end{align}
where $F(r)$ is a smooth function which satisfies
\begin{align}\label{eq:Einstein_F}
\frac{\dd}{\dd r}\left( \frac{(r^2 +  \ul{\Lambda}^2)^{m}}{r} F(r)\right)
& =  \frac{(r^2 + \ul{\Lambda}^2)^{m}}{r^2}  \ul{\Lambda} -  \frac{(r^2 +  \ul{\Lambda}^2)^{m+1}}{r^2}  \frac{\Lambda}{\ul{\Lambda}^2} \, .
\end{align}
for some constant $\Lambda$. We shall denote the constant of integration by $M$.

Up to rescaling of the constants by factors involving $\ul{\Lambda}$, the metric \eqref{eq:T-NUT_metric} corresponds to the one given in e.g.\ \cite{Awad2002}  when $(\ut{\mc{M}}, \ut{h}, \ut{J})$ is merely K\"{a}hler--Einstein, and the choice of function $F(r)$ satisfying \eqref{eq:Einstein_F} is to ensure that $g_{TN}$ is Einstein. We shall show that this also holds true when $(\ut{\mc{M}}, \ut{h}, \ut{J})$ is \emph{strictly almost K\"{a}hler}.

\begin{defn}\label{def:Taub-NUT}
	We call the Lorentzian manifold $(\mc{M}_{TN}, g_{TN})$ constructed above a \emph{Taub--NUT-type spacetime}.
\end{defn}
A Taub--NUT-type spacetime is sometimes referred to more specifically as \emph{Taub--NUT--AdS} when $\Lambda<0$, as \emph{Taub--NUT} when $\Lambda=0$, and as \emph{Taub--NUT--dS} when $\Lambda>0$. Here, AdS stands for anti-de Sitter, and dS for de Sitter. The respective physical interpretation of the constants $\Lambda$ and $M$ are that of a cosmological constant and a mass. The parameter $\ul{\Lambda}$ is referred to as the NUT (Newman--Unti--Tamburino) parameter.

Let us recast the metric \eqref{eq:T-NUT_metric} as
\begin{align*}
g_{TN} & = \frac{r^2 + \ul{\Lambda}^2}{\ul{\Lambda}^2} \left( 2 \kappa \odot \lambda + \ul{h} \right) \, ,
\end{align*}
where
\begin{align*}
\kappa & = 2 \, \ul{\theta}^0 + \frac{1}{\ul{\Lambda}F(r)} \dd r  \, , &
\lambda & = \frac{\ul{\Lambda}}{r^2+\ul{\Lambda}^2} \dd r - \frac{1}{2} \frac{\ul{\Lambda}^2}{r^2+\ul{\Lambda}^2}  F(r)\kappa \, .
\end{align*}
We note also $\lambda = \frac{1}{2} \frac{\ul{\Lambda}^2}{r^2+\ul{\Lambda}^2}  \left( - 2 F(r) \ul{\theta}^0 + \frac{1}{\ul{\Lambda}} \dd r \right)$. We now perform the change of variables
\begin{align*}
r = \ul{\Lambda} \tan \phi \, , & & \mbox{i.e.} & & \phi  = \arctan  \frac{r}{\ul{\Lambda}} \, .
\end{align*}
We note that $r \rightarrow \pm \infty$ as $\phi \rightarrow \pm \frac{\pi}{2}$. This defines a diffeomorphism for $\phi \in \left( - \frac{\pi}{2}, \frac{\pi}{2} \right)$. Now
\begin{align*}
r^2 + \ul{\Lambda}^2 & = \ul{\Lambda}^2 \sec^2 \phi \, , &
\frac{\dd}{\dd \phi} & = \frac{r^2 + \ul{\Lambda}^2}{\ul{\Lambda}} \frac{\dd}{\dd r} \, , &
\dd r & = \ul{\Lambda} \sec^2 \phi \, \dd \phi \, .
\end{align*}
Hence, defining
\begin{align*}
F(r) = - \frac{r^2 + \ul{\Lambda}^2}{\ul{\Lambda}^2} \lambda_0 \, , & & \mbox{i.e.} & &
\lambda_0 = - \frac{\ul{\Lambda}^2}{r^2 + \ul{\Lambda}^2} F(r) \, , 
\end{align*}
we have that $\kappa = 2 \, \ul{\theta}^0 - \frac{1}{\lambda_0} \dd \phi$. The last term is clearly closed, and thus locally exact, i.e.\ $\kappa$ is gauge equivalent to $2 \, \ul{\theta}^0$. We can thus assume that $\kappa = 2 \, \ul{\theta}^0$ by means of a local diffeomorphism. It can then be checked that $\lambda = \dd \phi + \lambda_0 \ul{\theta}^0$ as in Theorem \ref{thm:main-Einstein}. In addition, we check that $F(r)$ satisfies \eqref{eq:Einstein_F} if and only if $\lambda_0$ satisfies
\begin{align*}
\frac{\dd}{\dd \phi} \left( \frac{\sec^{2m+2} \phi}{\tan \phi} \lambda_0 \right)
& =  \frac{\sec^{2m+4} \phi }{\tan^2 \phi}  \Lambda - \frac{\sec^{2m+2} \phi}{\tan^2 \phi} \ul{\Lambda}  \, ,
\end{align*}
which is none other than equation \eqref{eq4}. To ensure consistency of these equations, we must have $M = - \ul{\Lambda}^{2m-1} \ul{c}$. This proves the following result.

\begin{thm}\label{thm:main-Taub_NUT-(A)dS}
	Let $(\mc{M}, \mbf{c}, K)$ be the optical geometry of Theorem \ref{thm:main-Einstein}. Assume $\ul{\Lambda}$ to be non-zero. Then the Einstein manifold $(\mc{M},\wh{g})$ is locally isometric to the Taub--NUT-type spacetime $(\mc{M}_{TN}, g_{TN})$. In particular, $g_{TN}$ is Einstein with Ricci scalar $2(m+1) \Lambda$.
\end{thm}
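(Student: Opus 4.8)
The strategy is to establish an explicit local isometry between the Einstein metric $\wh{g}$ of Theorem \ref{thm:main-Einstein} and the Taub--NUT-type metric $g_{TN}$ of \eqref{eq:T-NUT_metric}, by exhibiting a change of radial coordinate and a gauge transformation matching the two coordinate presentations. All the computational ingredients are, in fact, already laid out in the discussion preceding the statement; the proof mostly amounts to organising them and verifying the matching of the defining ODEs.

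First I would invoke Proposition \ref{prop:aK2aCR}: starting from a $2m$-dimensional almost K\"{a}hler--Einstein manifold $(\ut{\mc{M}}, \ut{h}, \ut{J})$ with nonzero Ricci scalar $2m\ul{\Lambda}$, one obtains on the associated circle bundle $\ul{\mc{M}}$ an almost CR--Einstein structure $(\ul{H}, \ul{J}, \ul{\theta}^0)$ with the same constant $\ul{\Lambda}$. This is precisely the data that feeds into Theorem \ref{thm:main-Einstein}, so conversely the leaf space of $\mc{K}$ in Theorem \ref{thm:main-Einstein} (with $\ul{\Lambda}\neq 0$) is of this form. Next I would take the metric $g_{TN}$ in the form \eqref{eq:T-NUT_metric}, recast it as $g_{TN} = \frac{r^2+\ul{\Lambda}^2}{\ul{\Lambda}^2}\left(2\kappa\odot\lambda + \ul{h}\right)$ with $\kappa$ and $\lambda$ as displayed in the excerpt, and then perform the substitution $r = \ul{\Lambda}\tan\phi$, which is a diffeomorphism for $\phi\in\left(-\tfrac{\pi}{2},\tfrac{\pi}{2}\right)$ and sends $r\to\pm\infty$ to $\phi\to\pm\tfrac{\pi}{2}$. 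Using $r^2+\ul{\Lambda}^2 = \ul{\Lambda}^2\sec^2\phi$, $\dd r = \ul{\Lambda}\sec^2\phi\,\dd\phi$, and the identification $F(r) = -\frac{r^2+\ul{\Lambda}^2}{\ul{\Lambda}^2}\lambda_0$, one finds $\kappa = 2\,\ul{\theta}^0 - \frac{1}{\lambda_0}\dd\phi$ and $\lambda = \dd\phi + \lambda_0\,\ul{\theta}^0$. The term $\frac{1}{\lambda_0}\dd\phi$ is exact, hence removable by a local diffeomorphism (a gauge transformation of $\kappa$), giving exactly the form $g = 2\kappa\odot\lambda + \ul{h}$, $\kappa = 2\,\varpi^*\ul{\theta}^0$, $\lambda = \dd\phi + \lambda_0\,\varpi^*\ul{\theta}^0$ of Theorem \ref{thm:main-Einstein}, and $\wh{g} = \sec^2\phi\,g = \frac{r^2+\ul{\Lambda}^2}{\ul{\Lambda}^2}g$.

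The final and only substantive step is to check that the Einstein condition \eqref{eq:Einstein_F} on $F(r)$ is equivalent, under this change of variables, to the ODE \eqref{eq4} satisfied by $\lambda_0$ — equivalently to the formula for $\lambda_0$ in Theorem \ref{thm:main-Einstein}. Substituting $F(r) = -\frac{r^2+\ul{\Lambda}^2}{\ul{\Lambda}^2}\lambda_0$ and $\frac{\dd}{\dd r} = \frac{\ul{\Lambda}}{r^2+\ul{\Lambda}^2}\frac{\dd}{\dd\phi}$ into \eqref{eq:Einstein_F}, and simplifying the powers of $\sec\phi$ and $\tan\phi$, turns it into
\begin{align*}
\frac{\dd}{\dd\phi}\left(\frac{\sec^{2m+2}\phi}{\tan\phi}\lambda_0\right) = \frac{\sec^{2m+4}\phi}{\tan^2\phi}\Lambda - \frac{\sec^{2m+2}\phi}{\tan^2\phi}\ul{\Lambda}\,,
\end{align*}
which is exactly \eqref{eq4}. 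Finally I would track the two constants of integration: the one in \eqref{eq:Einstein_F}, denoted $M$, must correspond to the constant $\ul{c}$ appearing in the solution \eqref{eq-soln} for $\lambda_0$; matching the leading asymptotics (or directly comparing the particular solutions) gives the relation $M = -\ul{\Lambda}^{2m-1}\ul{c}$. Since $\wh{g}$ is Einstein with Ricci scalar $(2m+2)\Lambda$ by Theorem \ref{thm:main-Einstein}, so is $g_{TN}$, i.e. its Ricci scalar is $2(m+1)\Lambda$. The main obstacle — really the only place any care is needed — is the bookkeeping in this last reduction: keeping the powers of $\sec\phi$, $\tan\phi$, and the factors of $\ul{\Lambda}$ straight through the coordinate change, and correctly identifying how the integration constant $M$ is normalised relative to $\ul{c}$; everything else is a direct translation of formulas already derived in Sections \ref{sec:Einstein} and \ref{sec:almost_Kahler}.
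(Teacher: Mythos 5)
Your proposal is correct and follows essentially the same route as the paper: recasting $g_{TN}$ as $\frac{r^2+\ul{\Lambda}^2}{\ul{\Lambda}^2}(2\kappa\odot\lambda+\ul{h})$, substituting $r=\ul{\Lambda}\tan\phi$, identifying $F(r)=-\frac{r^2+\ul{\Lambda}^2}{\ul{\Lambda}^2}\lambda_0$, removing the exact term in $\kappa$ by a local diffeomorphism, checking that \eqref{eq:Einstein_F} transforms into \eqref{eq4}, and matching the integration constants via $M=-\ul{\Lambda}^{2m-1}\ul{c}$. No gaps.
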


\section{Further geometric properties}\label{sec:further_prop}
Viewed as a $G$-structure, the almost Robinson structure $(N,K)$ of Theorem \ref{thm:main-Einstein} can easily be described in terms of its intrinsic torsion as can be gleaned from equations \eqref{eq:Dform_E}. We shall not pursue the matter here, which is dealt with in \cite{Fino2021}. Instead, we focus on the existence of additional geometric structures arising from the Einstein condition.

\subsection{A distinguished conformal Killing field}
\begin{prop}\label{prop:Killing}
	Let $(\mc{M}, \mbf{c}, K)$ be the optical geometry of Theorem \ref{thm:main-Einstein}. Then the vector field $v = g(\alpha, \cdot)$, where $\alpha := \lambda + \frac{1}{2} \lambda_0 \kappa$, is a Killling field for the metric $g$. Further, $v$ descends to the infinitesimal transverse symmetry $\ul{e}_0$ of the almost CR--Einstein structure $(\ul{H},\ul{J},\ul{\theta}^0)$ on the (local) leaf space $\ul{\mc{M}}$ of $\mc{K}$.
	
	This conformal Killing field is null if and only if $\wh{g}$ is a Ricci-flat Fefferman--Einstein metric.
\end{prop}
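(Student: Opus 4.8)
The plan is to verify directly that the $1$-form $\alpha := \lambda + \frac{1}{2} \lambda_0 \kappa$ satisfies Killing's equation $\nabla_{(a} \alpha_{b)} = 0$ for the metric $g$ using the explicit covariant derivatives \eqref{eq:Dform_E} recorded under the hypotheses of Theorem \ref{thm:main-Einstein}. First I would compute $\nabla \alpha = \nabla \lambda + \frac{1}{2} \dot{\lambda}_0 \, \kappa \otimes \kappa + \frac{1}{2} \lambda_0 \nabla \kappa$ term by term. From \eqref{eq:lambda} we have $\nabla \lambda = \ii \lambda_0 \ul{h}_{\alpha \bar{\beta}} \theta^{\alpha} \wedge \overline{\theta}{}^{\bar{\beta}} - \frac{1}{2} \dot{\lambda}_0 \, \kappa \otimes \lambda$, and from \eqref{eq:kappa} we have $\nabla \kappa = 2 \ii \ul{h}_{\alpha \bar{\beta}} \theta^{\alpha} \wedge \overline{\theta}{}^{\bar{\beta}} + \frac{1}{2} \dot{\lambda}_0 \, \kappa \otimes \kappa$. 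Combining, the two antisymmetric $\ii \ul{h}_{\alpha \bar{\beta}} \theta^{\alpha} \wedge \overline{\theta}{}^{\bar{\beta}}$ contributions (one from $\nabla\lambda$ with coefficient $\lambda_0$, one from $\frac12\lambda_0\nabla\kappa$ with coefficient $\lambda_0$) are already skew, so they drop out on symmetrisation; the remaining pieces are $-\frac12 \dot{\lambda}_0 \kappa\otimes\lambda$ from $\nabla\lambda$, $+\frac12\dot\lambda_0\,\kappa\otimes\kappa$ from the explicit second term, and $+\frac14\lambda_0\dot\lambda_0\,\kappa\otimes\kappa$ from $\frac12\lambda_0\nabla\kappa$. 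Symmetrising, $-\frac12\dot\lambda_0\,\kappa\odot\lambda$ needs to be matched; but note $\alpha = \lambda + \frac12\lambda_0\kappa$, so $-\frac12\dot\lambda_0\,\kappa\odot\lambda - \frac14\lambda_0\dot\lambda_0\,\kappa\odot\kappa + (\text{stuff})\cdot\kappa\odot\kappa = -\frac12\dot\lambda_0\,\kappa\odot\alpha + (\dots)\kappa\odot\kappa$. A careful bookkeeping (using that $\dot\lambda_0 = \mathsterling_k\lambda_0$ and $k$ is dual to $\lambda$ in the chosen frame, together with $\kappa \odot \kappa$ being symmetric) shows all the $\kappa\odot\kappa$ terms cancel, leaving $\nabla_{(a}\alpha_{b)} = 0$. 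This is the one genuine computation, but it is short and mechanical given \eqref{eq:Dform_E}.

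Next I would identify the descended object. Since $\alpha = \lambda + \frac12\lambda_0\kappa = \dd\phi + \lambda_0 \varpi^*\ul\theta^0 + \frac12\lambda_0\cdot 2\varpi^*\ul\theta^0 = \dd\phi + 2\lambda_0\varpi^*\ul\theta^0$ — wait, more carefully: $\kappa = 2\varpi^*\ul\theta^0$ and $\lambda = \dd\phi + \lambda_0\varpi^*\ul\theta^0$, so $\alpha = \dd\phi + \lambda_0\varpi^*\ul\theta^0 + \lambda_0\varpi^*\ul\theta^0 = \dd\phi + 2\lambda_0\varpi^*\ul\theta^0$. The metric dual $v = g^{-1}(\alpha,\cdot)$: using $g = 2\kappa\odot\lambda + h$ with $h = \varpi^*\ul h$, one inverts to find $v = e_0 + (\text{horizontal piece})\cdot k$ where $e_0 = \ell$ is the frame vector dual to $\lambda$; in fact from $g(\alpha,\cdot)$ one gets $v = e_0 + \frac12\lambda_0 k$ (the vector dual to $\lambda$ plus $\frac12\lambda_0$ times the vector dual to $\kappa$, all other pairings vanishing). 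Comparing with the relation $e_0 = \frac12(\ul e_0 - \lambda_0 e^0)$ recorded in Section \ref{sec-Robinson} (with $e^0 = k$), we get $v = \frac12(\ul e_0 - \lambda_0 k) + \frac12\lambda_0 k = \frac12\ul e_0$. Hence $\varpi_* v = \frac12\ul e_0$ (up to the harmless constant factor absorbed in normalisation), the Reeb vector field of $\ul\theta^0$, which is the transverse infinitesimal symmetry of $(\ul H,\ul J,\ul\theta^0)$ by Step 3 of Section \ref{sec:Computations} ($\ul A_{\alpha\beta}=0$, cf.\ the Remark after the Cartan structure equations).

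Finally, for the last sentence: $v$ is Killing for $g$, hence conformal Killing for every metric in $\mbf c$, in particular for $\wh g = \sec^2\phi\,g$. Its causal character with respect to $g$ is $g(v,v) = g(\alpha,\alpha) = 2\alpha(k)\alpha(\ell)\cdot(\dots)$; computing directly, $g^{-1}(\alpha,\alpha) = 2\lambda(v)\kappa(v)^{\sharp}\dots$ — cleanly, $\|\alpha\|^2_g = 2\,\alpha_a\alpha_b\,(\kappa\odot\lambda)^{ab} + \alpha_a\alpha_b h^{ab}$. With $\alpha = \lambda + \frac12\lambda_0\kappa$, $\kappa(k)=0$, $\lambda(k)=1$, $\kappa(\ell)=1$, $\lambda(\ell)=0$, and $\alpha$ having no $h$-component, one finds $\|\alpha\|^2_g = \lambda_0$. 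Therefore $v$ is null (with respect to any metric in $\mbf c$, conformal invariance of nullity) if and only if $\lambda_0 \equiv 0$. By Corollary \ref{cor:Ricci-flat-Feff-Einstein}, $\lambda_0 = 0$ is equivalent to $\wh g$ being a Ricci-flat Fefferman--Einstein metric (equivalently $\Lambda = \ul\Lambda = \ul c = 0$). This completes the proof. The main obstacle is simply organising the cancellations in the Killing-equation computation of the first paragraph; everything else is identification via the frame relations already established.
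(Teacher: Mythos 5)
Your overall route is exactly the paper's: compute $\nabla\alpha$ from \eqref{eq:Dform_E} and show it is skew, identify the projection of $v$ with the Reeb vector field of $\ul{\theta}^0$, and finish with $g(v,v)=\lambda_0$ together with Corollary \ref{cor:Ricci-flat-Feff-Einstein}. Your second and third paragraphs are correct and agree with the paper (which simply notes $\mathsterling_k v=0$, identifies $\ul{v}$ with the Reeb field, and records $g(v,v)=\lambda_0$); your frame computation $v=\ell+\tfrac12\lambda_0 k$, hence $v=\tfrac12\ul{e}_0$ up to the harmless constant, and the use of $\ul{A}_{\alpha\beta}=0$ for transversality, are fine.

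The gap is in the first paragraph: the Killing computation as written does not close, because the Leibniz term is expanded incorrectly. You replace $\tfrac12\,\dd\lambda_0\otimes\kappa$ by $\tfrac12\dot{\lambda}_0\,\kappa\otimes\kappa$, but $\dd\lambda_0\neq\dot{\lambda}_0\,\kappa$: since $\dot{\lambda}_0=\mathsterling_k\lambda_0$ and $k=e^0$ is dual to $\theta_0=\lambda$ (not to $\theta^0=\kappa$), and $\lambda_0$ depends on $\phi$ alone, one has $\dd\lambda_0=\dot{\lambda}_0\,\dd\phi=\dot{\lambda}_0\bigl(\lambda-\tfrac12\lambda_0\kappa\bigr)$. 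With your term in place, the symmetric part of the three pieces you list is $-\tfrac12\dot{\lambda}_0\,\kappa\odot\lambda+\bigl(\tfrac12\dot{\lambda}_0+\tfrac14\lambda_0\dot{\lambda}_0\bigr)\kappa\odot\kappa$, which cannot vanish because $\kappa\odot\lambda$ and $\kappa\odot\kappa$ are linearly independent; so the asserted ``careful bookkeeping'' cancellation is not available, and as written the Killing property is not proved. With the correct expansion the $\kappa\otimes\kappa$ contributions do cancel and one is left with $\nabla\alpha=2\ii\lambda_0\,\ul{h}_{\alpha\bar{\beta}}\,\theta^{\alpha}\wedge\overline{\theta}{}^{\bar{\beta}}+\tfrac12\dot{\lambda}_0\bigl(\lambda\otimes\kappa-\kappa\otimes\lambda\bigr)=\dd\alpha$, i.e.\ $\nabla\alpha$ is purely skew, which is precisely the paper's computation and yields $\nabla_{(a}\alpha_{b)}=0$. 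So the defect is a single mis-identified coframe component of $\dd\lambda_0$; once corrected, your argument coincides with the paper's proof.
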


\begin{proof}
	Using equations \eqref{eq:kappa} and \eqref{eq:lambda} together with the Leibniz rule, we find
	\begin{align*}
	\nabla \alpha & = 2 \ii \lambda_0 \ul{h}_{\alpha \bar{\beta}} \theta^{\alpha} \wedge \overline{\theta}{}^{\bar{\beta}} -  \dot{\lambda}_0 \kappa \wedge \lambda  \, ,
	\end{align*}
	i.e.\ $\nabla \alpha = \dd \alpha$, which shows that $v$ is a Killing field for $g$ as claimed. Further, since $\mathsterling_k v = 0$, the vector field $v$ projects down to a vector field $\ul{v}$ on $(\ul{\mc{M}}, \ul{H}, \ul{J})$. It is then clear that $\ul{v}$ is the Reeb vector field $\ul{e}_0$ of the contact form $\ul{\theta}^0$. Thus, by definition of almost CR--Einstein, it must be an infinitesimal symmetry of $(\ul{H}, \ul{J})$. Finally, we note that $g (v, v) = \lambda_0$, and the last claim follows from Corollary \ref{cor:Ricci-flat-Feff-Einstein}.
\end{proof}

\subsection{A second almost Robinson structure}
Under the assumption of Theorem \ref{thm:main-Einstein}, the existence of the Einstein metric $\wh{g}$ is essentially equivalent to the existence of the distinguished $1$-form $\lambda$. This not only determines a second optical structure $L$, where $L^\perp = \Ann(\lambda)$, but also a second almost Robinson structure which we now describe, carefully distinguishing between the cases $\lambda_0 \neq 0$ and $\lambda_0 =0$. When $\lambda_0 \neq 0$, it will be convenient to apply a boost transformation to $\kappa$ and $\lambda$ to obtain
\begin{align}\label{eq:new_lam_kap}
\lambda' & := 2 \frac{1}{\lambda_0} \lambda \, , & \kappa' & := \frac{1}{2} \lambda_0 \kappa \, \, .
\end{align}
These $1$-forms satisfy
\begin{subequations}\label{eq:new_Dlam_Dkap}
	\begin{align}
	\nabla \lambda' & =  2   \ii   \ul{h}_{\alpha \bar{\beta}} \theta{}^{\alpha} \wedge \overline{\theta}{}^{\bar{\beta}}  - \frac{1}{2}  \dot{\lambda}_0   \lambda' \otimes \lambda' \, , \label{eq:new_Dlam} \\
	\nabla \kappa' & = \ii \lambda_0 \ul{h}_{\alpha \bar{\beta}} \theta^{\alpha} \wedge \overline{\theta}{}^{\bar{\beta}} + \frac{1}{2} \dot{\lambda}_0 \lambda' \otimes \kappa' \, ,\label{eq:new_Dkap}
	\end{align}
\end{subequations}
respectively.

\begin{prop}\label{prop:aRob2}
	Let $(\mc{M}, \mbf{c}, N, K)$ be the twist-induced almost Robinson manifold of Theorem \ref{thm:main-Einstein}. Then the null $1$-form $\lambda$ defines an optical structure $L$ with non-shearing congruence of null geodesics $\mc{L}$, non-expanding with respect to $g$, and an almost Robinson structure $(N^*,L)$ dual to $(N,K)$, where
	\begin{align*}
	N^* & \cong  \left(\overline{N} \cap {}^\C H_{K,L} \right) \oplus {}^\C L \, , & H_{K,L}  & = K^\perp \cap L^\perp \, .
	\end{align*}
	These enjoy the following properties. 
	\begin{enumerate}
		\item If $\wh{g}$ is not a Ricci-flat Fefferman--Einstein metric, i.e.\ $\lambda_0 \neq 0$, then $\mc{L}$ is twisting and $(N^*, L)$ is twist-induced. Further, $(N^*,L)$, with $\lambda'$ defined in \eqref{eq:new_lam_kap}, induces on the (local) leaf space of $\mc{L}$ an almost CR--Einstein structure equivalent to $(\ul{H},\ul{J},\ul{\theta}^0)$.
		\item If $\wh{g}$ is a Ricci-flat Fefferman--Einstein metric, i.e.\ $\lambda_0 = 0$, then $\lambda$ is a null $1$-form parallel with respect to the Levi-Civita connection $\nabla$ of $g$, and in particular, $\mc{L}$ is non-twisting. Further, $(N^*,L)$ induces a local foliation by $2m$-dimensional Ricci-flat almost K\"{a}hler--Einstein manifolds on the (local) leaf space of $\mc{L}$.
	\end{enumerate}
\end{prop}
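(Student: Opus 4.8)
\textbf{Proof strategy for Proposition \ref{prop:aRob2}.} The plan is to exploit the explicit covariant-derivative formulae \eqref{eq:Dform_E} for the coframe $\{\kappa,\theta^\alpha,\overline{\theta}{}^{\bar\alpha},\lambda\}$, and the symmetry between the roles of $\kappa$ and $\lambda$ that these formulae exhibit. First I would check that $\lambda$, being null with $\lambda(k)=1$ and $\mathsterling_k\lambda$ controlled by \eqref{eq:lambda}, defines an optical structure $L$ with $L^\perp=\Ann(\lambda)$; that its congruence $\mc{L}$ is geodesic and non-expanding with respect to $g$ follows directly from the $\kappa\otimes\lambda$-term in \eqref{eq:lambda} (there is no component along $\theta^\alpha$ or $\overline\theta{}^{\bar\alpha}$ in $\nabla\lambda$, so $\ell=g^{-1}(\lambda,\cdot)$ is geodesic, and the trace of the screen-bundle part vanishes). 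Non-shearing then follows because the only non-$\kappa$, non-$\lambda$ part of $\nabla\lambda$ is the skew form $\ii\lambda_0\,\ul h_{\alpha\bar\beta}\theta^\alpha\wedge\overline\theta{}^{\bar\beta}$, which contributes purely to the twist and not the shear. Defining $J^*$ on the screen bundle $H_{K,L}$ of $L$ to be (minus) the complex structure $J$ used for $(N,K)$, i.e.\ declaring $N^*\cong(\overline N\cap{}^\C H_{K,L})\oplus{}^\C L$, gives an almost Robinson structure dual to $(N,K)$; the compatibility with $\mbf{c}_{H_L}$ is inherited from that of $J$.

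For part (1), assume $\lambda_0\neq0$ and pass to the boosted coframe \eqref{eq:new_lam_kap}. The key observation is that equations \eqref{eq:new_Dlam_Dkap} are \emph{formally identical} to \eqref{eq:kappa}--\eqref{eq:lambda} with the roles of $\kappa$ and $\lambda$ interchanged (and $\lambda_0$ in the first position playing the role that it played before): $\nabla\lambda'$ has exactly the shape of $\nabla\kappa$ in \eqref{eq:kappa}, while $\nabla\kappa'$ has the shape of $\nabla\lambda$ in \eqref{eq:lambda}. Since $\nabla\lambda'$ contains the term $2\ii\,\ul h_{\alpha\bar\beta}\theta^\alpha\wedge\overline\theta{}^{\bar\beta}$, the twist of $\mc{L}$ (measured via $\lambda'$) is non-degenerate, so $\mc{L}$ is twisting and $(N^*,L)$ is twist-induced in the sense of Section \ref{sec-Robinson}; moreover $\mathsterling_\ell\lambda'=0$ up to the $\dot\lambda_0$-term, so $\lambda'$ descends to a contact form on the leaf space $\ul{\mc{M}}{}^*$ of $\mc{L}$. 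One then reads off the induced pseudo-Hermitian data on $\ul{\mc{M}}{}^*$ from \eqref{eq:new_Dlam_Dkap} exactly as \eqref{eq:cov_der_frame} was used to read off the data for $(\ul{H},\ul{J},\ul{\theta}^0)$: the Levi form is again $\ul h_{\alpha\bar\beta}$, the pseudo-Hermitian torsion vanishes (there is no $\theta^\alpha\odot\theta^\beta$ term in $\nabla\lambda'$), and $\ul{\nabla}^\gamma\ul{\Nh}_{\gamma(\alpha\beta)}=0$ and the Webster--Einstein condition \eqref{eq:CR-E-R} hold because they held for the original structure and the screen bundles $H_K/K$ and $H_L/L$ are canonically identified with $H_{K,L}$. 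Thus the induced almost CR--Einstein structure on $\ul{\mc{M}}{}^*$ is equivalent to $(\ul{H},\ul{J},\ul{\theta}^0)$.

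For part (2), assume $\lambda_0=0$. Then $\dot\lambda_0=0$ as well (since $\lambda_0$ is a function on $\ul{\mc{M}}$, hence $k$-invariant, and by the formula for $\lambda_0$ in Theorem \ref{thm:main-Einstein} with $\Lambda=\ul{\Lambda}=\ul{c}=0$ by Corollary \ref{cor:Ricci-flat-Feff-Einstein}), and \eqref{eq:lambda} collapses to $\nabla\lambda=0$: $\lambda$ is parallel, hence $\mc{L}$ is non-twisting. Since $k^\perp\cap\ell^\perp=H_{K,L}$ is then a $\nabla$-parallel screen distribution carrying the bundle complex structure $J$ and the (parallel, by \eqref{eq:kappa} with $\lambda_0=0$) Hermitian form $\ul h_{\alpha\bar\beta}$, the leaf space of $\mc{L}$ inherits a local foliation by $2m$-dimensional almost K\"ahler manifolds; that these are Ricci-flat almost K\"ahler--Einstein follows from Corollary \ref{cor:aCR2aK} (part 2) applied to the almost CR--Einstein structure $(\ul{H},\ul{J},\ul{\theta}^0)$ with $\ul{\Lambda}=0$, together with Proposition \ref{prop:aCR2aK}, noting that the leaf space in question coincides with the almost K\"ahler leaf space of the transverse symmetry $\ul e_0$ of Proposition \ref{prop:Killing}.

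The main obstacle I anticipate is part (1): one must verify carefully that the formal $\kappa\leftrightarrow\lambda'$ symmetry of \eqref{eq:new_Dlam_Dkap} really does carry over \emph{all} of the structure equations \eqref{eq:cov_der_frame}, not just the three displayed lines — in particular that the Nijenhuis tensor, Chern--Moser tensor, and the function $\lambda_0$ reappear in the correct positions when one rewrites $\nabla\theta^\alpha$ in the boosted frame, so that the leaf-space structure genuinely is $(\ul{H},\ul{J},\ul{\theta}^0)$ and not merely something with the same Levi form. This is a bookkeeping task using the results of \cite{Taghavi-Chabert2014} on frame-independence, but it is where the real content lies; the geodesic/non-shearing/non-expanding claims and part (2) are comparatively routine once \eqref{eq:Dform_E} and \eqref{eq:new_Dlam_Dkap} are in hand.
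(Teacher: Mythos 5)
Your proposal is correct and, for the preliminary claims and for part (1), it is essentially the paper's own argument: the paper likewise reads the geodesic, non-expanding, non-shearing and (non-)twisting properties of $\mc{L}$ directly off \eqref{eq:lambda}, and proves part (1) exactly by observing that, after the boost \eqref{eq:new_lam_kap}, the coframes $\{\lambda',\theta^\alpha\}$ and $\{\kappa,\theta^\alpha\}$ satisfy structure equations governed by the same pseudo-Hermitian invariants, so the two leaf-space structures are equivalent; your worry about carrying over the full set of structure equations is precisely the (routine) content the paper leaves implicit. The only genuine divergence is in part (2): there the paper obtains the foliation of the leaf space of $\mc{L}$ by almost K\"ahler manifolds from the analysis of \cite{Fino2020,Fino2021} and verifies Ricci-flatness by restricting the Ricci tensor of $g$ to projectable vector fields tangent to $H_{K,L}$ using Appendix \ref{app:computations}, whereas you deduce it from Corollary \ref{cor:aCR2aK} with $\ul{\Lambda}=0$ after identifying the leaves with the almost K\"ahler quotient of $(\ul{H},\ul{J},\ul{\theta}^0)$ by $\ul{e}_0$; since $\lambda_0=0$ gives $\lambda=\dd\phi$, the leaves of $L^\perp$ are the $\phi$-level sets and this identification is legitimate, so your route works and trades a curvature computation for the CR-to-K\"ahler correspondence already established in Section \ref{sec:almost_Kahler}. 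Two small inaccuracies in your sketch are harmless but worth fixing: $\lambda_0$ is not in general a function on $\ul{\mc{M}}$ (it depends on $\phi$), though $\dot{\lambda}_0=0$ is trivial here because $\lambda_0\equiv 0$; and when $\lambda_0=0$ equation \eqref{eq:kappa} gives $\nabla\kappa=2\ii\,\ul{h}_{\alpha\bar\beta}\theta^\alpha\wedge\overline{\theta}{}^{\bar\beta}\neq 0$, so neither $\kappa$ nor the Hermitian form is parallel; what you actually need (and have) is only that $\lambda$ is parallel, hence closed, so that $L^\perp$ is integrable and $h$, $J$ descend to the leaves.
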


\begin{proof}
	The existence of the dual almost Robinson structure is self-explanatory, and that $\mc{L}$ is a non-shearing congruence of null geodesics, non-expanding with respect to $g$, non-twisting when  $\lambda_0=0$, twisting otherwise, follows directly from equation \eqref{eq:lambda}.
	\begin{enumerate}
		\item When $\lambda_0\neq 0$, we can work with an adapted coframe $\{ \kappa' , \theta^\alpha , \overline{\theta}{}^{\bar{\alpha}} , \lambda' \}$ where $\kappa'$ and $\lambda'$ are defined by \eqref{eq:new_lam_kap}. By inspection of \eqref{eq:new_Dlam_Dkap}, we note that the sets of $1$-forms $\{ \lambda', \theta^{\alpha} \}$ and $\{ \kappa, \theta^{\alpha} \}$ are on the same geometric footing. Both sets can be viewed as the pullbacks of adapted CR coframes on the respective (local) leaf spaces of $\mc{L}$ and $\mc{K}$. There, their respective structure equations are clearly determined by the same almost pseudo-Hermitian invariants. In particular, their respective pseudo-Hermitian structures must be equivalent.
		\item When $\lambda_0=0$, the existence of a foliation $\ul{\mc{H}}_{\mc{L}}$, say, by almost K\"{a}hler manifolds on the leaf space $\ul{\mc{M}}_{\mc{L}}$ of $\mc{L}$ follows from the analysis given in \cite{Fino2020,Fino2021}. Note that the tensor $h$ in Theorem \ref{thm:main-Einstein} can also be viewed as the pullback of a tensor field on $\ul{\mc{M}}_{\mc{L}}$, which restricts to an almost K\"{a}hler metric on each leaf of $\ul{\mc{H}}_{\mc{L}}$. That these metrics are Ricci-flat can be checked by restricting the Ricci tensor of $g$ to projectable vector fields tangent to $H_{K,L}$ using the computation of Appendix \ref{app:computations}.
	\end{enumerate}
	
\end{proof}

\begin{rem}
	By Proposition \ref{prop:conf_flat_Rob}, if $(\mc{M}, \mbf{c}, K)$ of Theorem \ref{thm:main-Einstein} is conformally flat, then $\wh{g}$ is a Ricci-flat Fefferman--Einstein metric so that the null $1$-form $\lambda$ defines a non-twisting non-shearing congruence of null geodesics.
\end{rem}

\begin{rem}
	When $\lambda_0 \neq 0$, the situation of Proposition \ref{prop:Killing} with respect to the almost Robinson structure $(N^*,L)$ is completely analogous, and one can check that $\mathsterling_{\ell'} v = 0$ where $\ell'=g^{-1}(\lambda',\cdot)$ with $\lambda'$ given by \eqref{eq:new_lam_kap}, i.e.\ $v$ descends to the transverse infinitesimal symmetry  of the almost CR--Einstein induced by $(N^*,L)$ on the leaf space $\ul{\mc{M}}_{\mc{L}}$ of $\mc{L}$. In fact, we can rewrite  $\alpha= \kappa' + \frac{1}{2} \lambda_0 \lambda'$.
	
	On the other hand, when $\lambda_0=0$, $v = \ell = g^{-1}(\lambda,\cdot)$ is tangent to $\mc{L}$ and thus projects down to the zero vector field on $\ul{\mc{M}}_{\mc{L}}$. However, $k$ now commutes with $\ell$ and descends to a transverse infinitesimal symmetry of the almost K\"{a}hler--Einstein foliation described in Proposition \ref{prop:aRob2}. One can check that the quotient by this symmetry yields an almost K\"{a}hler--Einstein manifold.
\end{rem}

\begin{rem}
	Note that the distribution $K + L$ is involutive, and thus $(\mc{M},\mbf{c},K)$ is foliated by two-dimensional leaves. It is straightforward to check that one recovers an almost K\"{a}hler--Einstein manifold on the leaf space of this foliation. This was already noted in \cite{Leitner2007} in the integrable case, where the Fefferman--Einstein space is described as a $2$-torus fibration over a K\"{a}hler--Einstein manifold.
\end{rem}

\section{Other signatures}\label{sec:diff_sign}
The results presented in this article can easily be adapted to conformal structures of signature $(p+1, q+1)$ with both $p$ and $q$ even: here, the notion of almost Robinson structure is well-defined since for these signatures, the existence of an almost null structure of real index one is possible -- see \cite{Kopczy'nski1992}. The only point of caution is that since the screen bundle conformal structure is no longer positive definite, the norm of the twist of a congruence of null geodesics may be non-positive, in which case it cannot define an almost Robinson structure. In split signature $(m+1,m+1)$, one can define a totally real analogue of a twist-induced almost Robinson structure if the norm of the twist is negative. When both $p$ and $q$ are both greater than one, the twist endomorphism can be nilpotent, a property that allows yet another type of geometric structure. All these cases will be treated elsewhere.
If the twist of a non-shearing congruence of null geodesics does induce an almost Robinson structure, the leaf space of the congruence inherits a partially integrable contact almost CR structure, this time, of signature $(p,q)$.

In odd dimensions, the geometry of non-shearing congruences of null geodesics is much more restrictive. The first point to note is that the leaf space is even-dimensional, which prohibits the existence of a contact structure there. Second, the algebraic constraint \eqref{eq-F^2} forces the twist to be either identically zero or null, which restricts the possible metric signatures. Hence, in Lorentzian signature, odd dimensions, a non-shearing geodetic congruence with Weyl curvature prescription \eqref{eq:aligned}
must be non-twisting \cite{Ortaggio2007}. This is also true when $p<2$ or $q<2$. For other signatures, the situation is a little bit more delicate -- again, this will be treated elsewhere.

\appendix

\section{Computation of the curvature}\label{app:computations}
In this section, we compute the curvature tensors of the metric $g$ given in Section \ref{sec-Robinson}. The use of the open-source software \texttt{cadabra} \cite{Peeters2007a,Peeters2018} was particularly helpful for that purpose. Not all components of the curvature tensors are given, but all may be obtained by means of complex conjugation and index manipulation. Parts of the first Bianchi identities are also given to bring out other forms of the components -- the remaining, `purely CR', parts being given by equations \eqref{eq:Bianchi1_CR}. We omit pullback maps for clarity.

\subsection{Riemann tensor}
\begin{fleqn}
	\begin{align}
	R_{\gamma}{}^{0 0}\,_{\alpha} & = 0 \, , \label{eq:R-Nsh}\\
	R_{\bar{\gamma}}{}^{0 0}\,_{\alpha} & = - \ul{h}_{\alpha \bar{\gamma}} \, , \label{eq:R-Nsh2}
	\end{align}
	\begin{align}
	R_{0}{}^{0 0}\,_{\alpha} & =     \dot{E}_{\alpha}    - \ii E_{\alpha}  \, , \label{eq:R-diffE}\\ 
	R_{\beta \delta}{}^{0}\,_{\alpha} & =     -  2 \ii \ul{\Nh}_{\beta \delta \alpha}  \, ,  \label{eq:R-T} \\
	R_{\beta \bar{\gamma}}{}^{0}\,_{\alpha} & = -  2 \ii E_{\alpha} \ul{h}_{\beta \bar{\gamma}} - \ii E_{\beta} \ul{h}_{\alpha \bar{\gamma}}  \label{eq:R-E} \, ,
	\end{align}
	\begin{align}
	R_{\gamma 0}{}^{0}\,_{\alpha} & =  - \ul{\nabla}_{\gamma} E_{\alpha} + \lambda_{\gamma} \dot{E}_{\alpha} +  E_{\alpha} E_{\gamma}    +E^{\beta} \ul{\Nh}_{\beta \alpha \gamma}      - \frac{1}{2} \ii \ul{A}_{\alpha \gamma}       - \ii B_{\gamma \alpha}  \, , \\
	R{}_{\bar{\beta} 0}{}^{0}{}_{\alpha} & = - \ul{\nabla}_{\bar{\beta}} E_{\alpha} + \lambda_{\bar{\beta}} \dot{E}_{\alpha} +  E_{\alpha} E_{\bar{\beta}}  + \ii E_{0} \ul{h}_{\alpha \bar{\beta}}   - \ii B_{\alpha \bar{\beta}} \, , \\
	R_{0}{}^{0 0}\,_{0} & = \dot{E}_{0}  -2  E_{\alpha} E^{\alpha}     \, , \\ 
	R_{\gamma \delta \alpha \beta} & = 2   \ul{\nabla}_{[\gamma|} \ul{\Nh}_{\alpha \beta |\delta]} \, ,  \label{eq:R-DT} \\
	R_{\gamma \bar{\delta} \alpha \beta} & =   2  \ii B_{\alpha \beta} \ul{h}_{\gamma \bar{\delta}} - 2 \ii B_{\gamma [\alpha} \ul{h}_{\beta] \bar{\delta}}  -  \ul{\nabla}_{\bar{\delta}} \ul{\Nh}_{\alpha \beta \gamma} + \ii \ul{A}_{\gamma [\alpha} \ul{h}_{\beta] \bar{\delta}}  \, ,  \label{eq:R-DbT} \\
	R_{\gamma \bar{\delta} \bar{\beta} \alpha} & =  \ul{R}_{\gamma \bar{\delta} \alpha \bar{\beta}}  -  2  \ii B_{\alpha \bar{\beta}} \ul{h}_{\gamma \bar{\delta}}  
	- 2 \ii B_{\gamma \bar{\delta}} \ul{h}_{\alpha \bar{\beta}}  
	- \ii B_{\gamma \bar{\beta} } \ul{h}_{\alpha \bar{\delta}}  
	- \ii B_{\alpha \bar{\delta}} \ul{h}_{\gamma \bar{\beta}} 
	+  \ul{\Nh}_{\epsilon \alpha \gamma}  \ul{\Nh}{}^{\epsilon}{}_{\bar{\beta} \bar{\delta}} \, ,  \label{eq:R-ST}
	\end{align}
	\begin{align}
	R_{0}{}^{0}{}_{\alpha 0} & = \frac{1}{2} \ul{\nabla}_{0}  E_{\alpha}  - \frac{1}{2}\lambda_{0} \dot{E}_{\alpha}+ E^{\beta} B_{\alpha \beta}  +E_{\beta} B_{\alpha}\,^{\beta} + \ii C_{\alpha} - \dot{C}_{\alpha} \, , \\
	R_{\gamma 0 \alpha \beta} & =  \ul{\nabla}_{\gamma} B_{\alpha \beta} - \lambda_{\gamma} \dot{B}_{\alpha \beta}  + 2  B_{[\alpha}\,^{\delta} \ul{\Nh}_{\beta] \delta \gamma}  +E_{[\alpha} \ul{A}_{\beta] \gamma} + 2  E_{[\alpha} B_{\beta] \gamma}   - \frac{1}{2}  \ul{\nabla}_{0} \ul{\Nh}_{\alpha \beta \gamma}    \, , \\
	R_{\beta \bar{\gamma} \alpha 0} & =  - \ul{\nabla}_{\bar{\gamma}} B_{\alpha \beta} + \lambda_{\bar{\gamma}} \dot{B}_{\alpha \beta} + \ul{\nabla}_{\beta} B_{\alpha \bar{\gamma}} - \lambda_{\beta} \dot{B}_{\alpha \bar{\gamma}}   -2 E_{\alpha} B_{\beta \bar{\gamma}} +E_{\beta} B_{\alpha \bar{\gamma}}   -E_{\bar{\gamma}}B_{\alpha \beta}  \nonumber  \\
	& \qquad  +  2  \ii C_{\alpha} \ul{h}_{\beta \bar{\gamma}}  - \frac{1}{2}E_{\bar{\gamma}} \ul{A}_{\alpha \beta} + B_{\bar{\gamma}}{}^{\delta} \ul{\Nh}_{\delta \alpha \beta}    - \frac{1}{2} \ul{\nabla}_{\bar{\gamma}} \ul{A}_{\alpha \beta} - \frac{1}{2}\ul{A}_{\bar{\gamma}}{}^{\delta} \ul{\Nh}_{\delta \alpha \beta}  \, ,  
	\end{align}
	\begin{align}
	R_{\beta 0 \alpha 0} & =  - \frac{1}{2} \ul{\nabla}_{0} B_{\alpha \beta} + \frac{1}{2} \lambda_{0} \dot{B}_{\alpha \beta} + \ul{\nabla}_{\beta}  C_{\alpha}  - \lambda_{\beta} \dot{C}_{\alpha}  -B_{\alpha}\,^{\gamma} \ul{A}_{\beta \gamma}    -E_{\alpha} C_{\beta}  +B_{\alpha \gamma} B_{\beta}\,^{\gamma}  \nonumber\\
	& \qquad - C^{\gamma} \ul{\Nh}_{\gamma \alpha \beta}  + B_{\beta \gamma} B_{\alpha}\,^{\gamma} +E_{\beta} C_{\alpha}  - \frac{1}{2}E_{0} \ul{A}_{\alpha \beta}   -E_{0} B_{\alpha \beta}  - \frac{1}{4} \ul{\nabla}_{0} \ul{A}_{\alpha \beta}  \, , \\
	R_{\bar{\beta} 0 \alpha 0}& = - \frac{1}{2} \ul{\nabla}_{0} B_{\alpha \bar{\beta}} + \frac{1}{2} \lambda_{0} \dot{B}_{\alpha \bar{\beta}} + \ul{\nabla}_{\bar{\beta}}  C_{\alpha}  - \lambda_{\bar{\beta}} \dot{C}_{\alpha}   -B_{\alpha \gamma} \ul{A}_{\bar{\beta}}{}^{\gamma} \nonumber \\
	& \qquad -E_{\alpha} C^{\beta} + B_{\alpha \gamma} B_{\bar{\beta} }{}^{\gamma}   -B_{\alpha}\,^{\gamma} B_{\gamma \bar{\beta}} +E_{\bar{\beta}} C_{\alpha} -E_{0} B_{\alpha \bar{\beta}}    - \frac{1}{4}\ul{A}_{\alpha \gamma} \ul{A}_{\bar{\beta} }{}^{\gamma}   \, .
	\end{align}
\end{fleqn}

\subsection{The Ricci tensor}
\begin{fleqn}
	\begin{align}
	\Ric^{0 0} & = 2m \, ,  \label{eq:Ric00}  \\
	\Ric_{\alpha}{}^{0} & = \dot{E}\,_{\alpha}  -4 \ii E_{\alpha}\, ,  \label{eq:Ric-diffE}
	\end{align}
	\begin{align}
	\Ric_{\alpha \beta} & =2 \ul{\nabla}_{(\alpha}{E_{\beta)}} - 2 \lambda_{(\alpha} \dot{E}\,_{\beta)} - 2  E_{\alpha} E_{\beta}
	+ \ii m \ul{A}_{\alpha \beta} + 2 \ul{\nabla}^{\gamma}{\ul{\Nh}_{\gamma (\alpha \beta)}}   -  2 \ul{\Nh}_{\gamma (\alpha \beta)}E^{\gamma}   \, , \\
	\Ric_{\alpha \bar{\beta}} &  =   \ul{\nabla}_{\alpha}{E_{\bar{\beta}}} + \ul{\nabla}_{\bar{\beta}}{E_{\alpha}}   -  \lambda_{\alpha} \dot{E}_{\bar{\beta}} - \lambda_{\bar{\beta}} \dot{E}_{\alpha}
	-2E_{\alpha} E_{\bar{\beta}} - 4 \ii B_{\alpha \bar{\beta}} + \ul{\Ric}_{\alpha \bar{\beta}} - \ul{\Nh}_{\alpha \delta \gamma} \ul{\Nh}_{\bar{\beta}}{}^{\delta \gamma} 
	\, , \\
	\Ric_{0}{}^{0} & =  \ul{\nabla}_{\alpha}{E^{\alpha}} + \ul{\nabla}^{\alpha}{E_{\alpha}}   -  \lambda_{\alpha} \dot{E}^{\alpha}  - \lambda^{\alpha} \dot{E}_{\alpha} - 4E_{\alpha} E^{\alpha}+2 \ii B_{\alpha}\,^{\alpha}+\dot{E}\,_{0} \, ,
	\end{align}
	\begin{align}
	\Ric_{0 \beta} & =   \ul{\nabla}_{\beta}{E_{0}}  
	- \frac{1}{2}\ul{\nabla}_{0}{E_{\beta}} - \lambda_{\beta} \dot{E}_{0}  + \frac{1}{2}\lambda_{0} \dot{E}\,_{\beta}   -\frac{1}{2}E^{\alpha} \ul{A}_{\beta \alpha}  - 2 E^{\alpha} B_{\alpha \beta}  + 2 E_{\alpha} B_{\beta}\,^{\alpha} \nonumber\\
	& \qquad \qquad -\ii  C_{\beta} +  \ul{\nabla}^{\alpha}{B_{\alpha \beta }} - \lambda^{\alpha} \dot{B}\,_{\alpha \beta}  -  \ul{\nabla}_{\alpha}{B_{\beta}\,^{\alpha}} + \lambda_{\alpha}\dot{B}_{\beta}\,^{\alpha}   \nonumber \\
	& \qquad \qquad \qquad \qquad \qquad 
	+\frac{1}{2} B^{\alpha \gamma} \ul{\Nh}_{\alpha \gamma \beta}  + \frac{1}{2} \ul{\nabla}^{\alpha}{\ul{A}_{\beta \alpha}} +\frac{1}{2}\ul{A}^{\alpha \gamma} \ul{\Nh}_{\beta \alpha \gamma} \, , \\
	\Ric_{0 0} & = 
	\ul{\nabla}_{\alpha} {C^{\alpha}} 
	+ \ul{\nabla}^{\alpha}{C_{\alpha}} - \lambda_{\alpha} \dot{C}^{\alpha} -  \lambda^{\alpha}\dot{C}_{\alpha} 
	- \frac{1}{2} \ul{A}_{\alpha \beta} \ul{A}^{\alpha \beta}+2B_{\alpha \beta} B^{\alpha \beta}-2B_{\alpha}\,^{\beta} B_{\beta}\,^{\alpha}\, .
	\end{align}
\end{fleqn}

\subsection{The Ricci scalar}
\begin{fleqn}
	\begin{multline}
	\Sc =  4 \ul{\nabla}_{\alpha}{E^{\alpha}} +4 \ul{\nabla}^{\alpha}{E_{\alpha}}  - 4 \lambda_{\alpha} \dot{E}^{\alpha} -4\lambda^{\alpha} \dot{E}\,_{\alpha}-12E_{\alpha} E^{\alpha} \\
	+2\dot{E}\,_{0}  -4 \ii B_{\alpha}\,^{\alpha} 
	+2 \ul{\Sc}  -2 \ul{\Nh}_{\alpha \beta \gamma} \ul{\Nh}^{\alpha \beta \gamma} \, .
	\end{multline}
\end{fleqn}

\subsection{The first Bianchi identities}
\begin{fleqn}
	\begin{align}
	0 & =  \dot{B}_{\alpha \beta} +  2 \ul{\nabla}_{[\alpha} {E_{\beta]}} - 2 \lambda_{[\alpha} \dot{E}_{\beta]}
	- \ul{\Nh}_{\alpha \beta \gamma} E^{\gamma} \, , \\
	0 & =   \dot{B}_{\alpha \bar{\beta}} +  \ul{\nabla}_{\alpha}{E_{\bar{\beta}}} - \ul{\nabla}_{\bar{\beta}}{E_{\alpha}}   -  \lambda_{\alpha} \dot{E}_{\bar{\beta}} + \lambda_{\bar{\beta}} \dot{E}_{\alpha}
	+  \ii  E_{0} \ul{h}_{\alpha \bar{\beta}}  \, , \\
	0 & =  \dot{C}_{\alpha}  + \ul{\nabla}_{\alpha}{E_{0}} - \ul{\nabla}_{0}{E_{\alpha}} - \lambda_{\alpha} \dot{E}_{0}  + \lambda_{0}\dot{E}_{\alpha}
	- \ul{A}_{\beta \alpha} E^{\beta} \, , \\
	0 & =   \ul{\nabla}_{[\alpha}{B_{\beta \gamma]}} - \lambda_{[\alpha} \dot{B}_{\beta \gamma]} +  B_{[\alpha}\,^{\delta} \ul{\Nh}_{\beta \gamma] \delta}
	+2E_{[\alpha} B_{\beta \gamma]}  \, , \\
	0 & =    2 \ul{\nabla}_{[\alpha}{B_{\beta] \bar{\gamma}}} +\ul{\nabla}_{\bar{\gamma}}{B_{\alpha \beta}} - 2 \lambda_{[\alpha}\dot{B}_{\beta] \bar{\gamma}} 
	- \lambda_{\bar{\gamma}} \dot{B}_{\alpha \beta} \nonumber \\
	& \qquad \qquad +  4E_{[\alpha} B_{\beta] \bar{\gamma}} + 2E_{\bar{\gamma}} B_{\alpha \beta}  -2 \ii C_{[\alpha} \ul{h}_{\beta] \bar{\gamma}}  
	+ \ul{\Nh}_{\alpha \beta \delta}  B_{\bar{\gamma}}{}^{\delta} \, , \\
	0 & =      \ul{\nabla}_{[\alpha}{C_{\beta]}} - \lambda_{[\alpha} \dot{C}_{\beta]}
	+ \frac{1}{2} \ul{\nabla}_{0}{B_{\alpha \beta}} - \frac{1}{2} \lambda_{0} \dot{B}_{\alpha \beta}  \nonumber \\
	& \qquad \qquad + 2E_{[\alpha} C_{\beta]}   +E_{0} B_{\alpha \beta} + B_{[\alpha}\,^{\gamma} \ul{A}_{\beta] \gamma}  -\frac{1}{2} \ul{\Nh}_{\alpha \beta \gamma} C^{\gamma}  \, , \\
	0 & = 
	\ul{\nabla}_{\alpha} {C_{\bar{\beta}}} 
	- \ul{\nabla}_{\bar{\beta}}{C_{\alpha}} - \lambda_{\alpha} \dot{C}_{\bar{\beta}} +  \lambda_{\bar{\beta}}\dot{C}_{\alpha}
	+ \ul{\nabla}_{0}{B_{\alpha \bar{\beta}}} - \lambda_{0}\dot{B}_{\alpha \bar{\beta}}  \nonumber \\
	& \qquad \qquad     +2E_{\alpha} C_{\bar{\beta}}    -2E_{\bar{\beta}} C_{\alpha}  +2E_{0} B_{\alpha \bar{\beta}} - B_{\bar{\beta}}{}^{\gamma} \ul{A}_{\gamma \alpha}   -  \ul{A}_{\bar{\beta}}{}^{\gamma}  B_{ \gamma \alpha} \, .
	\end{align}
\end{fleqn}

\bibliographystyle{abbrv}
\bibliography{biblio}

\begin{thebibliography}{10}

\bibitem{Alekseevskiui1970}
D.~V. Alekseevski\u{\i}.
\newblock Quaternionic {R}iemannian spaces with transitive reductive or
  solvable group of motions.
\newblock {\em Funkcional. Anal. i Prilo\v{z}en.}, 4(4):68--69, 1970.

\bibitem{Alekseevsky2018}
D.~V. Alekseevsky, M.~Ganji, and G.~Schmalz.
\newblock C{R}-geometry and shearfree {L}orentzian geometry.
\newblock In {\em Geometric complex analysis}, volume 246 of {\em Springer
  Proc. Math. Stat.}, pages 11--22. Springer, Singapore, 2018.

\bibitem{Alekseevsky2021}
D.~V. Alekseevsky, M.~Ganji, G.~Schmalz, and A.~Spiro.
\newblock Lorentzian manifolds with shearfree congruences and
  {K}\"{a}hler-{S}asaki geometry.
\newblock {\em Differential Geom. Appl.}, 75:101724, 2021.

\bibitem{Apostolov2003}
V.~Apostolov, D.~M.~J. Calderbank, and P.~Gauduchon.
\newblock The geometry of weakly self-dual {K}\"ahler surfaces.
\newblock {\em Compositio Math.}, 135(3):279--322, 2003.

\bibitem{Apostolov2001}
V.~Apostolov, T.~Dr\u{a}ghici, and A.~Moroianu.
\newblock A splitting theorem for {K}\"{a}hler manifolds whose {R}icci tensors
  have constant eigenvalues.
\newblock {\em Internat. J. Math.}, 12(7):769--789, 2001.

\bibitem{Armstrong2002}
J.~Armstrong.
\newblock An ansatz for almost-{K}\"{a}hler, {E}instein {$4$}-manifolds.
\newblock {\em J. Reine Angew. Math.}, 542:53--84, 2002.

\bibitem{Awad2002}
A.~M. Awad and A.~Chamblin.
\newblock A bestiary of higher-dimensional {T}aub-{NUT}-{A}d{S} spacetimes.
\newblock {\em Classical Quantum Gravity}, 19(8):2051--2061, 2002.

\bibitem{Bailey1994}
T.~N. Bailey, M.~G. Eastwood, and A.~R. Gover.
\newblock Thomas's structure bundle for conformal, projective and related
  structures.
\newblock {\em Rocky Mountain J. Math.}, 24(4):1191--1217, 1994.

\bibitem{Bais1985}
F.~A. Bais and P.~Batenburg.
\newblock A new class of higher-dimensional {K}aluza-{K}lein monopole and
  instanton solutions.
\newblock {\em Nuclear Phys. B}, 253(1):162--172, 1985.

\bibitem{Besse1987}
A.~L. Besse.
\newblock {\em Einstein manifolds}, volume~10 of {\em Ergebnisse der Mathematik
  und ihrer Grenzgebiete (3) [Results in Mathematics and Related Areas (3)]}.
\newblock Springer-Verlag, Berlin, 1987.

\bibitem{Cap2008}
A.~{\v{C}}ap and A.~R. Gover.
\newblock C{R}-tractors and the {F}efferman space.
\newblock {\em Indiana Univ. Math. J.}, 57(5):2519--2570, 2008.

\bibitem{Cap2010}
A.~{\v{C}}ap and A.~R. Gover.
\newblock A holonomy characterisation of {F}efferman spaces.
\newblock {\em Ann. Global Anal. Geom.}, 38(4):399--412, 2010.

\bibitem{Cartan1922}
E.~{Cartan}.
\newblock {Sur les espaces conformes g\'en\'eralis\'es et l'univers optique}.
\newblock {\em {C. R. Acad. Sci., Paris}}, 174:857--860, 1922.

\bibitem{Case2020}
J.~S. Case and A.~R. Gover.
\newblock The {$P'$}-operator, the {$Q'$}-curvature, and the {CR} tractor
  calculus.
\newblock {\em Ann. Sc. Norm. Super. Pisa Cl. Sci. (5)}, 20(2):565--618, 2020.

\bibitem{Fefferman1976}
C.~L. Fefferman.
\newblock Correction to: ``{M}onge-{A}mp\`ere equations, the {B}ergman kernel,
  and geometry of pseudoconvex domains'' ({A}nn. of {M}ath. (2) {\bf 103}
  (1976), no. 2, 395--416).
\newblock {\em Ann. of Math. (2)}, 104(2):393--394, 1976.

\bibitem{Fefferman1976a}
C.~L. Fefferman.
\newblock Monge-{A}mp\`ere equations, the {B}ergman kernel, and geometry of
  pseudoconvex domains.
\newblock {\em Ann. of Math. (2)}, 103(2):395--416, 1976.

\bibitem{Fino2020}
A.~Fino, T.~Leistner, and A.~Taghavi-Chabert.
\newblock {Optical geometries}.
\newblock {\em arXiv:2009.10012 preprint}, 2020.

\bibitem{Fino2021}
A.~Fino, T.~Leistner, and A.~Taghavi-Chabert.
\newblock {Almost Robinson geometries}.
\newblock {\em arXiv:2102.05634 preprint}, 2021.

\bibitem{Goldberg1962}
J.~N. Goldberg and R.~K. Sachs.
\newblock A theorem on {P}etrov types.
\newblock {\em Acta Phys. Polon.}, 22(suppl, suppl):13--23, 1962.

\bibitem{Goldberg2009}
J.~N. Goldberg and R.~K. Sachs.
\newblock Republication of: A theorem on petrov types.
\newblock {\em General Relativity and Gravitation}, 41:433--444, 2009.
\newblock 10.1007/s10714-008-0722-5.

\bibitem{Goldberg1969}
S.~I. Goldberg.
\newblock Integrability of almost {K}aehler manifolds.
\newblock {\em Proc. Amer. Math. Soc.}, 21:96--100, 1969.

\bibitem{Gover2005a}
A.~R. Gover.
\newblock Almost conformally {E}instein manifolds and obstructions.
\newblock In {\em Differential geometry and its applications}, pages 247--260.
  Matfyzpress, Prague, 2005.

\bibitem{Gover2005}
A.~R. Gover and C.~R. Graham.
\newblock C{R} invariant powers of the sub-{L}aplacian.
\newblock {\em J. Reine Angew. Math.}, 583:1--27, 2005.

\bibitem{Graham1987}
C.~R. Graham.
\newblock On {S}parling's characterization of {F}efferman metrics.
\newblock {\em Amer. J. Math.}, 109(5):853--874, 1987.

\bibitem{Gray1980}
A.~Gray and L.~M. Hervella.
\newblock The sixteen classes of almost {H}ermitian manifolds and their linear
  invariants.
\newblock {\em Ann. Mat. Pura Appl. (4)}, 123:35--58, 1980.

\bibitem{Hill2008}
C.~D. Hill, J.~Lewandowski, and P.~Nurowski.
\newblock Einstein's equations and the embedding of 3-dimensional {CR}
  manifolds.
\newblock {\em Indiana Univ. Math. J.}, 57(7):3131--3176, 2008.

\bibitem{Hughston1988}
L.~P. Hughston and L.~J. Mason.
\newblock A generalised {K}err-{R}obinson theorem.
\newblock {\em Classical Quantum Gravity}, 5(2):275--285, 1988.

\bibitem{Kerr1963}
R.~P. Kerr.
\newblock Gravitational field of a spinning mass as an example of algebraically
  special metrics.
\newblock {\em Phys. Rev. Lett.}, 11:237--238, 1963.

\bibitem{Kopczy'nski1992}
W.~Kopczy{\'n}ski and A.~Trautman.
\newblock Simple spinors and real structures.
\newblock {\em J. Math. Phys.}, 33(2):550--559, 1992.

\bibitem{Kundt1961}
W.~Kundt.
\newblock {The plane-fronted gravitational waves}.
\newblock {\em Z. Physik}, 163:77--86, 1961.

\bibitem{Lee1986}
J.~M. Lee.
\newblock The {F}efferman metric and pseudo-{H}ermitian invariants.
\newblock {\em Trans. Amer. Math. Soc.}, 296(1):411--429, 1986.

\bibitem{Lee1988}
J.~M. Lee.
\newblock Pseudo-{E}instein structures on {CR} manifolds.
\newblock {\em Amer. J. Math.}, 110(1):157--178, 1988.

\bibitem{Leitner2007}
F.~Leitner.
\newblock On transversally symmetric pseudo-{E}instein and
  {F}efferman-{E}instein spaces.
\newblock {\em Math. Z.}, 256(2):443--459, 2007.

\bibitem{Leitner2010}
F.~Leitner.
\newblock A gauged {F}efferman construction for partially integrable {CR}
  geometry.
\newblock {\em J. Geom. Phys.}, 60(9):1262--1278, 2010.

\bibitem{Lewandowski1988}
J.~Lewandowski.
\newblock On the {F}efferman class of metrics associated with a
  three-dimensional {CR} space.
\newblock {\em Lett. Math. Phys.}, 15(2):129--135, 1988.

\bibitem{Lewandowski1991}
J.~Lewandowski.
\newblock {Twistor equation in a curved space-time}.
\newblock {\em Class. Quant. Grav.}, 8:L11--L18, 1991.

\bibitem{Lewandowski1990}
J.~Lewandowski and P.~Nurowski.
\newblock Algebraically special twisting gravitational fields and {CR}
  structures.
\newblock {\em Classical Quantum Gravity}, 7(3):309--328, 1990.

\bibitem{Lewandowski1990a}
J.~Lewandowski, P.~Nurowski, and J.~Tafel.
\newblock Einstein's equations and realizability of {CR} manifolds.
\newblock {\em Classical Quantum Gravity}, 7(11):L241--L246, 1990.

\bibitem{Mason2010}
L.~{Mason} and A.~{Taghavi-Chabert}.
\newblock {Killing-Yano tensors and multi-Hermitian structures}.
\newblock {\em Journal of Geometry and Physics}, 60:907--923, June 2010.

\bibitem{Matsumoto2016}
Y.~Matsumoto.
\newblock G{JMS} operators, {$Q$}-curvature, and obstruction tensor of
  partially integrable {CR} manifolds.
\newblock {\em Differential Geom. Appl.}, 45:78--114, 2016.

\bibitem{Milson2005a}
R.~Milson, A.~Coley, V.~Pravda, and A.~Pravdov\'{a}.
\newblock Alignment and algebraically special tensors in {L}orentzian geometry.
\newblock {\em Int. J. Geom. Methods Mod. Phys.}, 2(1):41--61, 2005.

\bibitem{Newman1963}
E.~Newman, L.~Tamburino, and T.~Unti.
\newblock Empty-space generalization of the {S}chwarzschild metric.
\newblock {\em J. Mathematical Phys.}, 4:915--923, 1963.

\bibitem{nurowski93-phd}
P.~Nurowski.
\newblock {\em Einstein Equations and Cauchy-Riemann Geometry}.
\newblock PhD thesis, Scuola Internazionale Superiore di Studi Avanzati, 1993.

\bibitem{nurowski96}
P.~Nurowski.
\newblock Optical geometries and related structures.
\newblock {\em J. Geom. Phys.}, 18(4):335--348, 1996.

\bibitem{Nurowski1997}
P.~Nurowski.
\newblock {Twistor bundles, Einstein equations and real structures}.
\newblock {\em Class. Quant. Grav.}, 14:A261--A290, 1997.

\bibitem{Nurowski1999}
P.~Nurowski and M.~Przanowski.
\newblock A four-dimensional example of a {R}icci flat metric admitting
  almost-{K}\"{a}hler non-{K}\"{a}hler structure.
\newblock {\em Classical Quantum Gravity}, 16(3):L9--L13, 1999.

\bibitem{Nurowski2002}
P.~Nurowski and A.~Trautman.
\newblock Robinson manifolds as the {L}orentzian analogs of {H}ermite
  manifolds.
\newblock {\em Differential Geom. Appl.}, 17(2-3):175--195, 2002.
\newblock 8th International Conference on Differential Geometry and its
  Applications (Opava, 2001).

\bibitem{Ortaggio2009b}
M.~Ortaggio.
\newblock Bel-{D}ebever criteria for the classification of the {W}eyl tensor in
  higher dimensions.
\newblock {\em Classical Quantum Gravity}, 26(19):195015, 8, 2009.

\bibitem{Ortaggio2007}
M.~Ortaggio, V.~Pravda, and A.~Pravdova.
\newblock {Ricci identities in higher dimensions}.
\newblock {\em Class. Quant. Grav.}, 24:1657--1664, 2007.

\bibitem{Ortaggio2013}
M.~Ortaggio, V.~Pravda, and A.~Pravdova.
\newblock {Algebraic classification of higher dimensional spacetimes based on
  null alignment}.
\newblock {\em Class. Quant. Grav.}, 30:013001, 2013.

\bibitem{Peeters2007a}
K.~Peeters.
\newblock {Introducing Cadabra: A symbolic computer algebra system for field
  theory problems}.
\newblock 2007.

\bibitem{Peeters2018}
K.~Peeters.
\newblock Cadabra2: computer algebra for field theory revisited.
\newblock {\em Journal of Open Source Software}, 3(32):1118, 2018.

\bibitem{Penrose1967}
R.~Penrose.
\newblock Twistor algebra.
\newblock {\em J. Math. Phys.}, 8:345--366, 1967.

\bibitem{Penrose1986}
R.~Penrose and W.~Rindler.
\newblock {\em Spinors and space-time. {V}ol. 2}.
\newblock Cambridge Monographs on Mathematical Physics. Cambridge University
  Press, Cambridge, 1986.
\newblock Spinor and twistor methods in space-time geometry.

\bibitem{Petrov1954}
A.~Z. Petrov.
\newblock Classification of spaces defining gravitational fields.
\newblock {\em Kazan. Gos. Univ. U\v{c}. Zap.}, 114(8):55--69, 1954.

\bibitem{Robinson1961}
I.~Robinson.
\newblock Null electromagnetic fields.
\newblock {\em J. Mathematical Phys.}, 2:290--291, 1961.

\bibitem{Robinson1961/62}
I.~Robinson and A.~Trautman.
\newblock {Some spherical gravitational waves in general relativity}.
\newblock {\em Proc. Roy. Soc. London Ser. A}, 265:463--473, 1961/62.

\bibitem{Robinson1983}
I.~Robinson and A.~Trautman.
\newblock Conformal geometry of flows in {$n$} dimensions.
\newblock {\em J. Math. Phys.}, 24(6):1425--1429, 1983.

\bibitem{Robinson1985}
I.~Robinson and A.~Trautman.
\newblock Integrable optical geometry.
\newblock {\em Letters in Mathematical Physics}, 10:179--182, 1985.

\bibitem{Robinson1986}
I.~Robinson and A.~Trautman.
\newblock Cauchy-{R}iemann structures in optical geometry.
\newblock In {\em Proceedings of the fourth {M}arcel {G}rossmann meeting on
  general relativity, {P}art {A}, {B} ({R}ome, 1985)}, pages 317--324.
  North-Holland, Amsterdam, 1986.

\bibitem{Robinson1989}
I.~Robinson and A.~Trautman.
\newblock Optical geometry.
\newblock In P.~S. Ajduk, Z. and T.~A., editors, {\em New Theories in Physics,
  Proc. of the XI Warsaw Symp. on Elementary Particle Physics, Kazimierz 23-27
  May 1988}, pages 454--497. World Scientific, 1989.

\bibitem{Schmalz2019}
G.~Schmalz and M.~Ganji.
\newblock A criterion for local embeddability of three-dimensional {CR}
  structures.
\newblock {\em Ann. Mat. Pura Appl. (4)}, 198(2):491--503, 2019.

\bibitem{Sekigawa1987}
K.~Sekigawa.
\newblock On some compact {E}instein almost {K}\"{a}hler manifolds.
\newblock {\em J. Math. Soc. Japan}, 39(4):677--684, 1987.

\bibitem{Tafel1985}
J.~Tafel.
\newblock On the {R}obinson theorem and shearfree geodesic null congruences.
\newblock {\em Lett. Math. Phys.}, 10(1):33--39, 1985.

\bibitem{Taghavi-Chabert}
A.~Taghavi-Chabert.
\newblock {A characterisation of Fefferman spaces for almost CR geometry}.
\newblock {\em In preparation}.

\bibitem{Taghavi-Chabert2012}
A.~Taghavi-Chabert.
\newblock The complex {G}oldberg-{S}achs theorem in higher dimensions.
\newblock {\em J. Geom. Phys.}, 62(5):981--1012, 2012.

\bibitem{Taghavi-Chabert2014}
A.~Taghavi-Chabert.
\newblock {The curvature of almost Robinson manifolds}.
\newblock {\em arXiv:1404.5810 preprint}, 2014.

\bibitem{Taghavi-Chabert2016}
A.~Taghavi-Chabert.
\newblock Pure spinors, intrinsic torsion and curvature in even dimensions.
\newblock {\em Differential Geometry and its Applications}, 46:164 -- 203,
  2016.

\bibitem{Tanaka1975}
N.~Tanaka.
\newblock {\em A differential geometric study on strongly pseudo-convex
  manifolds}.
\newblock Kinokuniya Book-Store Co., Ltd., Tokyo, 1975.
\newblock Lectures in Mathematics, Department of Mathematics, Kyoto University,
  No. 9.

\bibitem{Taub1951}
A.~H. Taub.
\newblock Empty space-times admitting a three parameter group of motions.
\newblock {\em Ann. of Math. (2)}, 53:472--490, 1951.

\bibitem{Trautman1984}
A.~Trautman.
\newblock Deformations of the {H}odge map and optical geometry.
\newblock {\em J. Geom. Phys.}, 1(2):85--95, 1984.

\bibitem{Trautman1985}
A.~Trautman.
\newblock Optical structures in relativistic theories.
\newblock Number Num\'{e}ro Hors S\'{e}rie, pages 401--420. 1985.
\newblock The mathematical heritage of \'{E}lie Cartan (Lyon, 1984).

\bibitem{Trautman1999}
A.~Trautman.
\newblock Gauge and optical aspects of gravitation.
\newblock {\em Classical Quantum Gravity}, 16(12A):A157--A175, 1999.

\bibitem{Trautman2002}
A.~Trautman.
\newblock Robinson manifolds and {C}auchy-{R}iemann spaces.
\newblock {\em Classical Quantum Gravity}, 19(2):R1--R10, 2002.

\bibitem{Trautman2002a}
A.~Trautman.
\newblock Robinson manifolds and the shear-free condition.
\newblock In {\em Proceedings of the {C}onference on {G}eneral {R}elativity,
  {C}osmology and {R}elativistic {A}strophysics ({J}ourn\'ees Relativistes)
  ({D}ublin, 2001)}, volume~17, pages 2735--2737, 2002.

\bibitem{Webster1978}
S.~M. Webster.
\newblock Pseudo-{H}ermitian structures on a real hypersurface.
\newblock {\em J. Differential Geometry}, 13(1):25--41, 1978.

\end{thebibliography}


\end{document}